\documentclass[11pt, reqno]{article}

\usepackage{epsfig,amsmath,amsthm}
\usepackage{graphicx}
\usepackage{amssymb}
\usepackage{enumerate}
\usepackage{comment}
\usepackage{color}
\usepackage{paralist}
\usepackage[left=2cm,right=2cm,top=2cm,bottom=2cm]{geometry}
\usepackage{fancyhdr}
\pagestyle{fancy}
\chead{Critical GMC: a review}
\lhead{}
\rhead{}
\usepackage{hyperref}
\hypersetup{
	colorlinks=true,
	linkcolor=blue,
	citecolor=red,
	urlcolor=blue,
	pdfborder={0 0 0}
}

\usepackage{cleveref}
\crefname{theorem}{Theorem}{Theorems}
\crefname{thm}{Theorem}{Theorems}
\crefname{lemma}{Lemma}{Lemmas}
\crefname{lem}{Lemma}{Lemmas}
\crefname{remark}{Remark}{Remarks}
\crefname{rmk}{Remark}{Remarks}
\crefname{prop}{Proposition}{Propositions}
\crefname{notation}{Notation}{Notations}
\crefname{claim}{Claim}{Claims}
\crefname{defn}{Definition}{Definitions}
\crefname{definition}{Definition}{Definitions}
\crefname{cor}{Corollary}{Corollaries}
\crefname{example}{Example}{Examples}
\crefname{section}{Section}{Sections}
\crefname{figure}{Figure}{Figures}
\crefname{assumption}{Assumption}{Assumptions}

\newtheorem{theorem}{Theorem}[section]

\newtheorem{lemma}[theorem]{Lemma}
\newtheorem{cor}[theorem]{Corollary}
\newtheorem{prop}[theorem]{Proposition}

\newtheorem{example}[theorem]{Example}

\numberwithin{equation}{section}

\theoremstyle{definition}
\newtheorem{rmk}[theorem]{Remark}

\DeclareMathOperator{\cov}{cov}

\DeclareMathOperator{\GFF}{GFF}


\begin{document}
 \newcommand{\test}{test}

\newcommand{\R}{\mathbb{R}}
\newcommand{\C}{\mathbb{C}}
\newcommand{\N}{\mathbb{N}}
\newcommand{\Z}{\mathbb{Z}}
\newcommand{\I}{\mathbf{1}}
\newcommand{\E}{\mathbb{E}}
\newcommand{\HH}{\mathbb{H}}
\newcommand{\st}{\,\partial}
\newcommand{\stt}{\partial}
\newcommand{\e}{\operatorname{e}}
\newcommand{\im}{\mathrm{i}}
\newcommand{\eps}{\varepsilon}
\newcommand{\grad}{\operatorname{grad}}
\newcommand{\divg}{\operatorname{div}}
\newcommand{\Prob}[2]{\mathbb{P}_{#1} \left({#2} \right)}
\newcommand{\Prbb}[2]{\mathbf{P}_{#1} \left({#2} \right)}
\newcommand{\Prb}[1]{\mathbb{P} \left({#1} \right)}
\newcommand{\ip}[2]{\left\langle #1, #2 \right\rangle}
\newcommand{\tl}[1]{\tilde{#1}}
\newcommand{\B}{\mathbb{B}}
\newcommand{\D}{\mathbb{D}}
\newcommand{\Nz}{\mathbb{N}_0}
\newcommand{\Q}{\mathbb{Q}}
\renewcommand{\P}{\mathbb{P}}
\newcommand{\bbH}{\mathbb{H}}
\newcommand{\ga}{\gamma}
\renewcommand{\GFF}{\op{GFF}}
\newcommand{\1}{\mathbf{1}}
\renewcommand{\Re}{\mathrm{Re}}
\renewcommand{\Im}{\mathrm{Im}}
\newcommand{\scr}{\mathscr}
\def\cZ{\mathcal{Z}}
\def\cY{\mathcal{Y}}
\def\cX{\mathcal{X}}
\def\cW{\mathcal{W}}
\def\cV{\mathcal{V}}
\def\cU{\mathcal{U}}
\def\cT{\mathcal{T}}
\def\cS{\mathcal{S}}
\def\cR{\mathcal{R}}
\def\cQ{\mathcal{Q}}
\def\cP{\mathcal{P}}
\def\cO{\mathcal{O}}
\def\cN{\mathcal{N}}
\def\cM{\mathcal{M}}
\def\cL{\mathcal{L}}
\def\cK{\mathcal{K}}
\def\cJ{\mathcal{J}}
\def\cI{\mathcal{I}}
\def\cH{\mathcal{H}}
\def\cG{\mathcal{G}}
\def\cF{\mathcal{F}}
\def\cE{\mathcal{E}}
\def\cD{\mathcal{D}}
\def\cC{\mathcal{C}}
\def\cB{\mathcal{B}}
\def\cA{\mathcal{A}}
\def\cl{\mathfrak{l}}
\newcommand{\BB}{\mathbb}
\newcommand{\bS}{\mathbb{S}}
\newcommand{\ol}{\overline}
\newcommand{\ul}{\underline}
\newcommand{\op}{\operatorname}
\newcommand{\la}{\langle}
\newcommand{\ra}{\rangle}
\newcommand{\bd}{\mathbf}
\newcommand{\frk}{\mathfrak}
\newcommand{\eqD}{\overset{d}{=}}
\newcommand{\rtaD}{\overset{d}{\rightarrow}}
\newcommand{\rta}{\rightarrow}
\newcommand{\xrta}{\xrightarrow}
\newcommand{\Rta}{\Rightarrow}
\newcommand{\hookrta}{\hookrightarrow}
\newcommand{\wt}{\widetilde}
\newcommand{\wh}{\widehat} 
\newcommand{\mcl}{\mathcal}
\newcommand{\pre}{{\operatorname{pre}}}
\newcommand{\lrta}{\leftrightarrow}
\newcommand{\bdy}{\partial}
\newcommand{\tr}{\op{tr}}
\newcommand{\neu}{\op{Neu}}
\newcommand{\uo}{\op{u}}
\newcommand{\Lo}{{\op{L}}}
\newcommand{\Ro}{{\op{R}}}
\newcommand{\rad}{{\op{rad}}}
\newcommand{\cir}{{\op{circ}}}
\newcommand{\w}{{\op{wedge}}}
\newcommand{\loc}{\op{loc}}
\newcommand{\var}{\op{var}}
\newcommand{\indep}{\perp \!\!\! \perp}
\newcommand{\qbs}{\Q^{\beta,*}}

\newcommand{\noi}{\noindent}

\title{\textsc{Critical Gaussian multiplicative chaos:} \\ \textsc{a review}}
\date{}
\author{ Ellen Powell\thanks{Durham University}}
\maketitle
\vspace{-0.5cm}
\abstract{This review-style article presents an overview of recent progress in constructing and studying critical Gaussian multiplicative chaos. A proof that the critical measure in any dimension can be obtained as a limit of subcritical measures is given.}\\

\noindent \textbf{Key words:} Gaussian multiplicative chaos, log-correlated Gaussian fields, Liouville quantum gravity. 

\noindent \textbf{Subject classification}: 60G15, 60G57.

\tableofcontents

\section{Introduction}

The purpose of this article is to survey and expand on recent developments in the theory of Gaussian multiplicative chaos at the so-called critical parameter. Gaussian multiplicative chaos (GMC) is the theory, originally developed by Kahane \cite{Kah85}, that aims to rigorously define measures of the form 
\begin{equation}\label{eq:gmc_formal}
\mu^\gamma(dx):=\e^{\gamma h(x)-\frac{\gamma^2}{2}\E(h(x)^2)}\, dx 
\end{equation}
when $h$ is a log-correlated Gaussian field and $dx$ is Lebesgue measure on $\R^d$. The precise definition of such a field will be given shortly, see \eqref{eqn:kx}, but the main problem is that $h$ will not be realisable as a pointwise defined function. Rather, it will only make sense as a random generalised function or Schwarz distribution, making the meaning of \eqref{eq:gmc_formal} unclear a priori. The original interest in defining such measures was to make mathematical sense of a model of Mandelbrot \cite{Man72} for energy dissipation in turbulence, but it has since found applications in many other fields, ranging from mathematical finance to Liouville quantum gravity: see \cite{RV14} for a survey.

Starting with the work of Kahane \cite{Kah85}, and now generalised and developed by many authors, it has been shown \cite{RV10, Ber17, Sha16} that one can define $\mu^\gamma$ as in \eqref{eq:gmc_formal} via a number of different regularisation procedures, as long as the parameter $\gamma$ in question is less than the critical value \begin{equation}\gamma_c=\sqrt{2d}.\end{equation} Importantly in this case, the limit $\mu^\gamma$ does not depend on the precise way that the regularisation is carried out. This justifies that $\mu^\gamma$ is the ``correct'' interpretation of \eqref{eq:gmc_formal}.  It is known that $\mu^\gamma$ is almost surely non-atomic but singular with respect to Lebesgue measure, and many further properties concerning its moments, multifractal behaviour and tail behaviour (among other things) have now been proven. See \cite{RV10,DS11,RV14,JS17, RVtail,MDW19subcrit} and the references therein for more details.

When $\gamma\ge \gamma_c$ however, a different picture emerges. The regularisation scheme that works perfectly when $\gamma<\gamma_c$ now fails, in the sense that it yields a trivial $(\equiv 0)$ measure in the limit, \cite{RV10}. This begs the question of whether something meaningful can still be defined in this regime. Since the subcritical measures become more and more localised as $\gamma$ increases (roughly speaking, they live on a set of Hausdorff dimension converging to $0$) the natural guess is that any definition of a supercritical measure will be atomic, and this indeed turns out to be the case: see \cite{RV14} for a summary. The value $\gamma=\gamma_c$ is particularly intriguing, as it represents the transition between these two different types of measure. For a discussion of the relevance of this phase transition from a  physical perspective, the reader is referred to the introductions of \cite{DRSV14one,DRSV14two}.

As will be discussed in this article, there is a rich mathematical theory that emerges at the critical value $\gamma=\gamma_c$, and it is possible to define a canonical ``critical measure'', denoted $\mu'$, which \emph{is} still non-atomic but only barely so \cite{DRSV14one,DRSV14two,JS17,HRV18,Pow18chaos,JSW19}. The article will be structured as follows.
\begin{itemize}
	\item First, the various constructions of critical chaos will be stated in their most general form, and some ideas will be given for the proofs. Several important properties will also be discussed.
	\item Next, the idea of constructing critical chaos as a limit, or derivative, from the subcritical regime will be explored. As part of the section a proof will be given for this construction in the general setting, which seems to be new.
	\item Finally, some applications of critical chaos will be surveyed, which are surprisingly far-reaching and cover topics currently undergoing rapid and active investigation.
\end{itemize}

The main aim of this article is to showcase the key ideas underlying the construction(s) of critical chaos, and to hopefully not get too bogged down in technicalities (of which there are many). It is intended that analogies with the theory of branching random walks and branching Brownian motion should never be too far from the forefront of the exposition.

\subsection*{Acknowledgements}

The article was written following the Inhomogeneous Random Systems conference in Paris, 2020. I would like to thank Fran\c{c}ois Dunlop and Ellen Saada for the organisation of this illuminating meeting, and Nathana\"{e}l Berestycki for the invitation to attend and give a talk.

I would also like to thank Marek Biskup, Yan Fyodorov, Antoine Jego, Gaultier Lambert and Fr\'{e}d\'{e}ric Ouimet for providing many helpful comments concerning the ``applications'' section of the paper.

\section{Construction and properties}\label{sec:const}

\subsection{Log-correlated Gaussian fields} \label{sec:general_fields}

\begin{def}\label{def:general_fields}
In this article, a log-correlated Gaussian field refers to a centred Gaussian field $X$ defined on a domain $D\subseteq \R^d$, whose covariance kernel $K_X$ satisfies
\begin{equation}\label{eqn:kx}
K_X(x,y)=-\log(|x-y|)+g(x,y),
\end{equation}
with $g\in H_{\loc}^{d+\eps}(D\times D)$ for some $\eps>0$.
\end{def}

This is the most general class of field for which results concerning critical Gaussian multiplicative chaos exist, \cite{JSW19}. Such a field $X$ cannot be defined as a function assigning values to the points of $D$, but does make sense as a random generalised function, or distribution in the sense of Schwarz. This random distribution will in fact have some additional regularity properties. For example, it will almost surely be an element of $H^{-\eps}(D)$ for any $\eps>0$, \cite[Proposition 2.3]{JSW18}.  

There are some special types of field that are particularly nice to study in the context of Gaussian multiplicative chaos, due to  explicit and useful decorrelation properties. As such they have played an important role in the development of the theory, as well as often being mathematically and physically relevant for separate reasons. For example: 

\begin{itemize}
	\item The \emph{planar Gaussian free field} with zero boundary conditions on a simply connected domain $D\subset \C=\R^2$ is the centred Gaussian field  with covariance kernel \begin{equation} \label{eqn:GFF_kernel} K(x,y)=G_D(x,y), \end{equation}
	where $G_D$ is the Green's function for Brownian motion killed when leaving $D$. This field is particularly important due to two special properties: conformal invariance and a spatial Markov property. These properties actually characterise the field \cite{BPR18}, making it in some sense a ``universal object" describing the fluctuations of random planar height functions. See \cite{Ber16,SheffieldGFF,WPgff} for a more detailed introduction. As will be mentioned later in this article, the Markov property gives rise to some martingales that make constructing chaos for the GFF somewhat more straightforward than in the general case. 
	
	It should also be mentioned that chaos measures for the GFF are important objects from a physical perspective. Very roughly speaking, they are supposed to represent the volume form of a ``uniformly chosen random surface" weighted by the partition function of some statistical physics model. They are often referred to as ``Liouville quantum gravity measures'' in the probability literature.
	
	\item One can alternatively consider Gaussian free fields with non-zero boundary conditions on $D$; a natural example being the Gaussian free field with ``Neumann'' or ``free'' boundary conditions (again see \cite{Ber16,SheffieldGFF}). This has covariance kernel given by the Green's function in $D$ with \emph{Neumann} boundary conditions, with the caveat that a lack of uniqueness for this kernel only defines the field up to an additive constant. One can fix the additive constant in any number of ways to get back to the setting of \eqref{eqn:kx}.
	\item For example, taking $D=\D$ and $K(x,y)=-\log(|x-y||1-x\bar{y}|)$ yields the GFF with ``vanishing mean on the unit circle". When this field is restricted to the unit circle $\partial \D$ it gives rise to a centred Gaussian field with covariance
	\begin{equation}\label{eqn:gff_cov_circle}
	K(\e^{i\theta},\e^{i\theta'})=-2\log(|e^{i\theta}-e^{-i\theta}|) \text{ for } \theta,\theta'\in [0,2\pi],
	\end{equation}
	sometimes referred to as the ``GFF on the unit circle''. Note that one can reparametrise by $\theta$ to define a field on $[0,2\pi]\subset \R$, but should divide by $\sqrt{2}$ to get a 1d log-correlated field as in \eqref{eqn:kx}. These fields play an important role in extreme value theory; for example in connection to random matrices. See \cite{Arg16} for a survey on this, and also the discussion in \cref{sec:extrema} below.
	\item Finally, there is a class of log-correlated Gaussian fields on $D=\R^d$ whose covariance kernel has the special integral form,
	\begin{equation}\label{eqn:ssi}K(x,y)=\int_1^\infty \frac{k(u(x-y))}{u} \, du\end{equation}
	for some $k:\R^d\to \R$ such that $(x,y)\mapsto k(x-y)$ is a covariance on $\R^d$. For the rest of this article, the definition of $\star$-scale invariant will also mean that $k$ is rotationally symmetric, continuously differentiable, supported in $B(0,1)$ and with $k(0)=1$. Under these conditions, $K$ defines the covariance kernel of a log-correlated field as in \eqref{eqn:kx}, see \cite{DRSV14one,DRSV14two, JSW19}.
	These play an important role for a couple of reasons: firstly, there is a particularly natural way to approximate these fields; and secondly, their associated chaos measures satisfy a certain scaling relation. These will both be discussed in the next section.

\end{itemize}

\subsection{Approximations}

Recall from the introduction that the Gaussian multiplicative chaos associated to $X$ as in \eqref{eqn:kx} is formally given by the measure
\begin{equation}\label{eqn:gmc_formal}
\e^{\gamma X(x) - \frac{\gamma^2}{2} \E(X(x)^2)} \, dx 
\end{equation} 
on $D$, for some $\gamma\ge 0$. Since $X$ is not a pointwise defined function, this does not make sense a priori, and so one needs to define it via a regularisation procedure.

One natural way to approximate, or regularise, a rough (distribution-valued) Gaussian field as in \cref{sec:general_fields}, is to convolve it with a smooth approximation to the identity. More precisely, if $X$ is defined on a subset of $\R^d$ and $\psi\ge 0$ is a smooth function with compact support and total integral one, one considers 
\begin{equation}
\label{eqn:xeps}
X_\eps := \psi_\eps * X
\end{equation}
for each $\eps>0$, where $\psi_\eps(\cdot)=\eps^{-d} \psi(\eps^{-1 \cdot})$. 
Then for any $x,y\in D$ such that $B(x,\eps),B(y,\eps)\subset D$, the covariance between $X_\eps(x)$ and $X_\eps(y)$ is given by the (double) convolution of $K_X$ with $\psi_\eps$. The field $X_\eps$ therefore has covariance close to that of $K$ for small $\eps$, but is a bona fide continuous random field. Such an approximation will be referred to forthwith as a \emph{convolution approximation} to the field.
 
For particular types of field there are some other natural approximations. The examples below are especially convenient to work with, since they give rise to approximate chaos measures with a simple martingale structure, which is of course helpful when trying to prove convergence results. Moreover, they have direct counterparts in the setting of branching random walks, whose behaviour has been extensively studied and is by now very well understood. This point will be expanded on throughout the present article.

\begin{itemize}
	\item 	Suppose that $X$ is a $\star$-scale invariant field with associated function $k$. Then 
	\begin{equation}
	\label{eqn:starcut}
	K_t(x,y)=\int_1^{\e^t} \frac{k(u(x-y))}{u} \, du 
	\end{equation} is the covariance kernel of a field $X_t$, approximating $X$ as $t\to \infty$. Moreover, the fields $(X_t;\, t\ge 0)$ can be coupled, \cite{ARV13}, so that $(X_{t}(x);\, t\ge 0)$ has independent increments for any $x$, and such that $(X_{t}(x)-X_{t_0}(x); {t\ge t_0})$ and $(X_{t}(y)-X_{t_0}(y);\, {t\ge t_0})$ are independent for any $x,y$ with $|x-y|\ge \e^{-t_0}$. This readily implies, for example, that for any $x\in \R^d$, $(X_t(x);\, t\ge 0)$ has the law of a standard linear Brownian motion started from $0$.
	
	For ease of reference, let us refer to such a coupled sequence of approximations $(X_t; t\ge 0)$ as the ``$\star$-scale cut-off approximations'' to $X$.
	
	\item	The form of the kernel in the $\star$-scale invariant case means that chaos measures for such fields satisfy a special scaling relation, \cite{DRSV14one}, called the ``$\star$-equation''. More precisely, if $\mu^\gamma$ is a subcritical ($\gamma<\sqrt{2d}$) chaos measure for a $\star$-scale invariant field, then for any $t\ge 0$
	\begin{align}
	\label{eqn:star}
	(\mu^\gamma(A); \,A\in \cB(\R^d)) & \overset{(\mathrm{law})}{=} (\int_{\R^d} \e^{\gamma X_t(x)-\frac{\gamma^2}{2}\E(X_t(x)^2)}\e^{-dt}\mu^{\gamma,t}(dx) ;\, A\in \cB(\R^d)) \text{ where } \\ (\mu^{\gamma,t}(A);\, A\in \cB(\R^d)) & \overset{(\mathrm{law})}{=} (\mu^\gamma(\e^t A); \,A\in \cB(\R^d)) \text{ and } \mu^{\gamma,t} \indep (X_t(x); x\in \R^d)
	\end{align} 
	(and the law of $(X_t(x);\, x\in \R^d)$ is as described in the previous bullet point.
	
	This says that the measure looks the same after rescaling space, up to a smooth independent Gaussian change of measure. Solutions of this equation are analogous to fixed points of the smoothing transform for the branching random walk (see \cite{BK05} for a review).
	
	\item When $X$ is a planar Gaussian free field with zero boundary conditions, one can define the \emph{circle average} around any point in the domain, by taking a sequence of smooth test functions approximating uniform measure on the circle and then taking a limit of $X$ tested against these functions. It can easily be shown that these limits exist for every circle contained in the domain, and moreover, that there exists a version of the process that is almost surely jointly continuous in the centre and radius of the circle, \cite{HMP10}. Write $h_\eps(z)$ for the average on the circle $\partial B_\eps(z)$. Then the Markovian property of the GFF means that for any $z\in D$, the process $t\mapsto X_{\e^{-t}}(z)$ is continuous and centred, with stationary and independent increments.  It is therefore (some multiple of) a Brownian motion. In fact, the Markov property further gives that if $x$ and $y$ are two distinct points, then $(X_{\e^{-t}}(x)-X_{\e^{-t_0}}(x);\,{t\ge t_0})$ and $(X_{\e^{-t}}(y)-X_{\e^{-t_0}}(y); \, {t\ge t_0})$ are  independent if $t_0\ge -\log|(|x-y)/2|$.
	\item There is another special way to approximate the planar Gaussian free field, that is even nicer for the purposes of constructing chaos measures. This is because it very closely links the Gaussian free field with a branching random walk, and importantly, provides approximate GMC measures with a martingale property. This approximation relies on a beautiful coupling between the Gaussian free field and a conformal loop ensemble with parameter 4, CLE$_4$. The coupling is due to Miller and Sheffield \cite{MSCLE}; see also \cite{BTLS} for a proof.
	
	For the purposes of this article, let us just mention very briefly that CLE$_4$ is a random collection of disjoint simple loops defined in the unit disc, whose law is conformally invariant, and such that the union of the interiors of the loops has full Lebesgue measure. This means that CLE$_4$ can be unambiguously defined in any simply connected domain (by mapping to the unit disc) and in particular, the construction can be iterated inside each loop of the CLE$_4$. This allows one to define an (infinitely) nested version of the conformal loop ensemble. One can then associate a branching random walk to the branching sequences of nested loops, where the walk steps are Bernoulli $\pm 1$. If one stops this branching random walk after $n$ steps, and assigns the relevant value to the interior of each $n$th level nested CLE loop, then this provides a function $X_n(x)$ that is defined at all but a Lebesgue-null set of points $x$ in the disc (it is constant inside each loop). Somewhat incredibly, it holds that \begin{equation}
	\label{eqn:cle_approx}
	 X_n(x)\to X \text{ as } n\to \infty,
	\end{equation} in probability (as generalised functions), where $X$ is a multiple of the zero boundary GFF on the disc. 

  \end{itemize}



\subsection{The phase transition}\label{sec:phase_trans}

Recall from the introduction that if $(X_\eps)_{\eps\ge 0}$ are a sequence of suitable approximations to the field, the method for constructing subcritical GMC measures $(\gamma<\sqrt{2d})$ is to take a limit of approximate measures 
\begin{equation}
\label{eqn:mueps}
\mu_\eps^\gamma(dz):= \exp(\gamma X_\eps(z)-\frac{\gamma^2}{2}\E(X_\eps(z)^2)) \, dz
\end{equation} 
as $\eps\to 0$. The limiting measure exists in probability, and is almost surely non trivial \cite{Kah85,RV10,Ber17,Sha16}. Attempting to do the same when $\gamma\ge \sqrt{2d}$, one encounters a phase transition. That is:

\begin{lemma}\label{lem:conv_to_0}
Suppose that $(X_\eps)_{\eps \ge 0}$ are a sequence of convolution approximations to a log-correlated Gaussian field $X$ as in \cref{sec:general_fields}. Define $\mu_\eps^\gamma$ as in \eqref{eqn:mueps}. Then for any $A\subset D$ compact, $\mu^\gamma_\eps(A)\to 0$ in probability as $\eps\to 0$. 
\end{lemma}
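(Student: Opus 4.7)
The plan is to split into $\gamma > \gamma_c := \sqrt{2d}$, where an elementary Girsanov-plus-max argument suffices, and $\gamma = \gamma_c$, where I would reduce to the $\star$-scale invariant setting via Kahane's inequality and appeal to the classical vanishing of the critical additive martingale.

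For $\gamma > \gamma_c$, fix $\alpha \in (\gamma_c,\gamma)$ and truncate:
\[
\tilde{\mu}_\eps^\gamma(A) := \int_A \e^{\gamma X_\eps(x)-\frac{\gamma^2}{2}\E(X_\eps(x)^2)}\mathbf{1}_{\{X_\eps(x)\le \alpha \log(1/\eps)\}}\,dx.
\]
The Cameron--Martin shift by $\gamma\,\cov(X_\eps(x),X_\eps(\cdot))$, combined with $\E(X_\eps(x)^2)=\log(1/\eps)+O(1)$ and a standard Gaussian tail estimate, gives $\E[\tilde{\mu}_\eps^\gamma(A)]\le C|A|\,\eps^{(\gamma-\alpha)^2/2}\to 0$. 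Since $\alpha>\gamma_c$, a union bound over an $\eps$-net of $A$ (or Borell--TIS) shows $\max_{x\in A}X_\eps(x)\le\alpha \log(1/\eps)$ with probability tending to one, so $\tilde{\mu}_\eps^\gamma(A)=\mu_\eps^\gamma(A)$ on this good event and Markov's inequality concludes.

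For $\gamma = \gamma_c$ the window $(\gamma_c,\gamma)$ is empty, so the above argument breaks down. Here I would use Kahane's convexity inequality to compare $\mu_\eps^{\gamma_c}(A)$ with the $\star$-scale cutoff chaos $\tilde{\mu}_t^{\gamma_c}(A)$ at $t=\log(1/\eps)$. Since $K_X=-\log|x-y|+g(x,y)$ with $g$ locally bounded, after modifying both fields by independent smooth Gaussian additions---which change $\mu_\eps^{\gamma_c}(A)$ only by bounded multiplicative factors---one can arrange that the convolution covariance pointwise dominates the $\star$-scale covariance with matching diagonals. Kahane then yields $\E[F(c\,\mu_\eps^{\gamma_c}(A))]\le \E[F(\tilde{\mu}_t^{\gamma_c}(A))]$ for any concave increasing $F$ and some constant $c>0$. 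In the $\star$-scale setting the coupling in \eqref{eqn:starcut} makes $(\tilde{\mu}_t^{\gamma_c}(A))_{t\ge 0}$ a non-negative martingale, converging a.s.\ to some $M_\infty$, and the classical derivative martingale / barrier analysis (the direct analogue of the A\"{\i}d\'{e}kon--Shi argument for critical branching random walks) forces $M_\infty=0$. Taking $F(x)=x\wedge K$ and sending $K\to\infty$ then yields $\mu_\eps^{\gamma_c}(A)\to 0$ in probability.

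The critical case is the main obstacle. A direct truncation of the type used for $\gamma>\gamma_c$ cannot succeed there because the typical maximum $\max_{x\in A}X_\eps(x) = \gamma_c\log(1/\eps)-\tfrac{3}{2\gamma_c}\log\log(1/\eps)+O(1)$ leaves no admissible cutoff: any threshold simultaneously above this maximum and below $\gamma_c\log(1/\eps)$ differs from the shifted Gaussian's mean by only a $\log\log$ amount, which is negligible relative to the standard deviation $\sqrt{\log(1/\eps)}$, so the first-moment bound after Girsanov fails to vanish. The Kahane comparison sidesteps this by transferring the problem to a setting with a genuine martingale structure, where the vanishing of the critical additive martingale can be imported from the well-developed theory of critical branching random walks.
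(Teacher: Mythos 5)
Your proposal is correct in its overall structure but takes a partially different route from the paper. The paper treats all $\gamma\ge\gamma_c$ in one stroke: it builds a piecewise-constant field $Y_n$ from a branching random walk whose covariance is bounded above by $-\log|x-y|+C_d$ off the diagonal, applies Kahane's convexity inequality after decorating $\gamma X_\eps$ with an independent constant Gaussian to enforce the pointwise comparison, and imports from the spine change of measure that the BRW additive martingale $M_n^\gamma$ vanishes for every $\gamma\ge\gamma_c$. Your $\gamma=\gamma_c$ branch is essentially this same strategy with a $\star$-scale cutoff field playing the role of the BRW (harmless, since the two are themselves Kahane-comparable), though you should say \emph{additive} martingale where you write ``derivative martingale'': what you need to import is the vanishing of the critical additive martingale, established via the spine decomposition, not the derivative martingale convergence nor a barrier computation. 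Your $\gamma>\gamma_c$ branch, by contrast, is genuinely different and more elementary: a direct truncation-plus-first-moment argument that avoids Kahane entirely, buying a self-contained proof in the strictly supercritical regime at the cost of a case split. The one step glossed over there is the claim that $\max_{x\in A}X_\eps(x)\le\alpha\log(1/\eps)$ with high probability: a union bound over an $\eps$-net controls only the net points, and you still need to bound the oscillation of $X_\eps$ below scale $\eps$ (e.g.\ via a chaining estimate using $\E[(X_\eps(x)-X_\eps(y))^2]\lesssim |x-y|/\eps$, or Borell--TIS backed by a Dudley bound on $\E\sup_A X_\eps$); this is standard but should be said. Finally, your diagnosis of why the truncation cannot work at $\gamma_c$ is accurate, and is a nice explanation of why the case split is forced in your approach while the paper's single Kahane comparison covers both regimes.
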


This is a well understood phenomenon in the branching random walk literature, and it is instructive to see how the proof works in this case. In fact, one can use this to prove \cref{lem:conv_to_0}, as was done in \cite[Appendix]{DRSV14one}: see below. \\

Take a branching random walk starting with one particle at a random position $W$, where at each stage $n$ for $n\ge 1$, particles die and give birth to exactly $2d$ children particles. Suppose that the position of each of these children is displaced by an independent copy of $W$ from the position of their parent. For concreteness, take $W\sim N(0,\ln(2))$ (although the argument works in a much more general setting) and write $\mathbb{P}$ for the law of this process. 

It is well known and easy to check that if $(X_n(1),\cdots, X_{n}(2^{dn}))$ are the positions of the particles at time $n$, then 
\begin{equation}
\label{eqn:mnb_def} M^\gamma_n := \prod_{i=1}^{2^{dn}} \e^{\gamma X_n(i) - (\gamma^2/2+d)n\ln 2 } \end{equation}
is a positive martingale with respect to the natural filtration $(\mathcal{F}_n ; n\ge 0)$ of the branching random walk. It therefore has an almost surely finite limit as $n\to \infty$.

A useful criterion to determine whether the limit is trivial, comes from  the following basic fact, \cite{Durbook}. If $\mathbb{Q}$ is the probability measure such that $(d\mathbb{Q}/d\mathbb{P})|_{\mathcal{F}_n}=M_n^\gamma$ for each $n$ then $M_n^\gamma \to 0$ under $\mathbb{P}$ if and only if $\limsup_{n\to \infty} M_n^\gamma = \infty$ under $\mathbb{Q}$ (and this is further equivalent to $\mathbb{Q}$ being singular with respect to $\mathbb{P}$). One can determine rather easily if this is the case, since there is a very nice description of the behaviour of the process under $\mathbb{Q}$ known as the ``spine decomposition". This is an extensively used technique in the branching process literature: see \cite{HRspine} for a general formulation.

To describe the decomposition in this set-up, first write $\mathbb{P}^*$ for the law of the branching random walk, plus a distinguished ``spine'' chosen uniformly at random. That is, take the spine particle at generation $0$ to be the initial particle, and then iteratively choose the spine particle at generation $n$ by picking one of the children of the spine particle at generation $(n-1)$ uniformly at random. The motion $(X_n^*;n\ge 0)$ of the spine particle under $\mathbb{P}^*$ is just a random walk with $N(0,\ln(2))$ increments, and so $\xi_n:=\exp(\gamma X_n^*-(\gamma^2/2)n\ln 2)$ is a unit-mean martingale. Define $\mathbb{Q}^*$ by setting $(d\mathbb{Q}^*/d\mathbb{P}^*)|_{\cF_n^*}=\xi_n$ for each $n$, where $(\cF_n^*; n\ge 0)$ is the filtration generated by the branching random walk \emph{and the spine} up to generation $n$. It then follows easily from Girsanov's theorem that under $\mathbb{Q}^*$: the spine particle evolves as a random walk with $N(0,\ln 2)$ increments \emph{plus} a drift of $\beta \ln 2$; and at each stage the spine gives birth to exactly $(2d-1)$ non-spine children, who each start (from their respective positions) an independent $\mathbb{P}$-branching random walk. The point of all this is that $(d\mathbb{P}^*/d\mathbb{Q}^*)|_{\cF_n}=M_n^\gamma$ for each $n$, so that $\Q^*$ and $\Q$ give the same marginal law to the branching random walk. In particular,  if $\limsup_{n\to \infty}M_n^\gamma$ a.s.\ under $\mathbb{Q}^*$, then the same holds under $\mathbb{Q}$.

It turns out that the knowledge the spine particle under $\mathbb{Q}^*$ makes this condition easy to check. Indeed, it is certainly true that $M_n^\gamma$ is bigger than $\exp(\gamma X_n^* - (\gamma^2/2+d)n\ln 2)$, where under $\mathbb{Q}^*$,  $(\gamma X_n - (\gamma^2/2+d)n\ln 2;\, {n\ge 0})$ is a random walk with $N(0,\gamma^2\ln 2)$ increments plus a drift of $(\gamma^2/2-d)n \ln 2$. This implies that if $\gamma^2\ge 2d$, $\limsup_{n\to \infty} M_n^\gamma= \infty$ almost surely under $\mathbb{Q}^*$. As explained above, this implies the same under $\mathbb{Q}$, and hence that $M_n^\gamma \to 0$ under $\mathbb{P}$. \\

To prove \cref{lem:conv_to_0} we can essentially use the above, together with an extremely useful comparison inequality due to Kahane, \cite{Kah85}. 

\begin{theorem}[Kahane's convexity inequality]
	\label{thm:kahane} Suppose that $Z_1$ and $Z_2$ are two  almost surely continuous centred Gaussian fields defined on $D\subset \R^d$, with $\mathbb{E}[Z_i(x)Z_i(y)]=K_i(x,y)$ for $i=1,2$. Suppose further that 
 $K_1(x,y)\le K_2(x,y)$ for all $x,y$. Then if $F:(0,\infty)\to \R$ is any convex function that grows at most polynomially fast at $0$ and $\infty$, it holds that 
	\begin{equation}
	\label{eqn:kah}
	 \mathbb{E}(F(\int_D \exp( Z_1(x)-\frac{1}{2}\E(Z_1(x)^2)) \, dx ))\le \mathbb{E}(F(\int_D \exp( Z_2(x)-\frac{1}{2}\E(Z_2(x)^2)) \, dx )). \end{equation}
\end{theorem}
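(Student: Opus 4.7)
The plan is to prove the inequality by Gaussian interpolation, i.e.\ to view each side as the value at an endpoint of a smooth function of $t\in[0,1]$ whose derivative has a definite sign. Without loss of generality (since only the marginal laws of $Z_1,Z_2$ enter), I would assume $Z_1 \indep Z_2$ on the same probability space, and consider
$$Y_t(x) := \sqrt{1-t}\,Z_1(x) + \sqrt{t}\,Z_2(x), \qquad t\in[0,1],$$
a centred Gaussian field with covariance $K_t := (1-t)K_1 + tK_2$. Setting $M_t(x):= \exp(Y_t(x)-\tfrac12 K_t(x,x))$, $I_t := \int_D M_t(x)\,dx$ and $\varphi(t):=\E[F(I_t)]$, the task reduces to showing $\varphi'(t)\ge 0$ on $(0,1)$.

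Before differentiating, I would first replace $I_t$ by a Riemann-sum approximation $I_t^N := \sum_k w_k M_t(x_k)$ over a partition of $D$, which allows the computation to proceed in finite dimensions with Kronecker deltas in place of distributional objects. Almost sure continuity of the fields and polynomial growth of $F$ give enough uniform integrability (via Gaussian moment bounds, uniformly in $t$) to pass to the limit $N\to\infty$ at the end.

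The core step is the derivative computation. Writing $\partial_t Y_t(x_k) = \tfrac{1}{2\sqrt{t}}Z_2(x_k) - \tfrac{1}{2\sqrt{1-t}}Z_1(x_k)$ and $\partial_t K_t(x_k,x_k) = K_2(x_k,x_k)-K_1(x_k,x_k)$, the chain rule yields
$$\varphi'(t) \;=\; \E\Bigl[F'(I_t^N)\sum_k w_k M_t(x_k)\Bigl(\partial_t Y_t(x_k) - \tfrac12 \partial_t K_t(x_k,x_k)\Bigr)\Bigr].$$
Now I would apply Gaussian integration by parts (Stein's lemma), separately for the $Z_1$- and $Z_2$-linear pieces, conditioning on the other independent field. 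Each IBP produces a ``diagonal'' term involving $K_i(x_k,x_k)$, and these diagonal contributions exactly cancel the drift correction $-\tfrac12 \partial_t K_t(x_k,x_k)$ built into the exponential martingale normalisation of $M_t$. The surviving off-diagonal contributions combine to
$$\varphi'(t) \;=\; \tfrac{1}{2}\sum_{k,\ell} w_k w_\ell\bigl(K_2(x_k,x_\ell)-K_1(x_k,x_\ell)\bigr)\,\E\bigl[F''(I_t^N)\,M_t(x_k)M_t(x_\ell)\bigr],$$
which is nonnegative: $K_2-K_1\ge 0$ by hypothesis, $F''\ge 0$ by convexity, and $M_t, w_k>0$. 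Integrating from $0$ to $1$ gives $\varphi(1)\ge\varphi(0)$ for the discretised functional, and the limit $N\to\infty$ transfers this to the original integral.

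The main obstacle I expect is the technical bookkeeping rather than any conceptual difficulty: one must first mollify $F$ to make $F''$ literally defined (then approximate a general polynomially-growing convex $F$), justify the Stein-type IBP rigorously for a nonlinear functional like $F'(I_t^N)M_t(x_k)$, and ensure that the moments controlling both $I_t^N$ and its continuum limit remain uniformly bounded in $t\in[0,1]$ and in $N$. The polynomial-growth assumption on $F$ combined with the finite exponential moments of Gaussians makes all of this go through by dominated convergence, but organising the uniform estimates is where the real effort sits.
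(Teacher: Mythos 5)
Your proposal is correct, and it is essentially the classical argument: the paper does not give its own proof but cites Kahane's original work, where the result is established by exactly this Gaussian interpolation device. You set $Y_t=\sqrt{1-t}\,Z_1+\sqrt{t}\,Z_2$ with $Z_1\indep Z_2$, discretise, differentiate in $t$, apply Gaussian integration by parts (Stein's lemma) separately to the $Z_1$- and $Z_2$-linear pieces, observe the diagonal terms cancel against $-\tfrac12\partial_t K_t(x_k,x_k)$, and are left with $\tfrac12\sum_{k,\ell}w_k w_\ell\,(K_2-K_1)(x_k,x_\ell)\,\E[F''(I_t^N)M_t(x_k)M_t(x_\ell)]\ge 0$. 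The sign bookkeeping matches the statement ($\varphi(0)\le\varphi(1)$). Your listed technical caveats are the right ones: mollify $F$ so $F''$ makes sense, justify the IBP rigorously in finite dimensions, and control moments of $I_t^N$ uniformly in $t$ and $N$ so that the $N\to\infty$ passage is legitimate; note in addition that since $F$ is allowed to blow up polynomially at $0$, one needs finite negative moments of $I_t$, which hold because the field is a.s.\ continuous so $I_t$ is a.s.\ bounded below by a positive random variable with sufficiently light lower tail. In short, the proposal is a sound reconstruction of the standard proof the paper delegates to the reference.
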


\begin{proof}[Proof of \cref{lem:conv_to_0}]
In order to make use of the preceding discussion, one needs to find a way to relate $M_n^\gamma$ to a ``chaos measure'' of some sort. In fact, there is a very natural way to do this. Without loss of generality, assume that $A$ is the unit cube $[0,1]^d\subset \R^d$. 
If $X_0$ is the initial position of the branching random walk, define $Y_0$ to be the constant function equal to $X_0$ on $[0,1]^d$. At the next stage, divide the unit cube into $2d$ sub-cubes of side length $1/2$ and set $Y_1$ to be constant in each of these sub-cubes: equal to $X_1(i)$ in sub-cube $i$, where the sub-cubes are ordered in some way and $X_1(1),\cdots, X_1({2d})$ are positions of the particles in the branching random walk at generation one.  Iterating this procedure defines a centred Gaussian field $Y_n$ on $[0,1]^d$ for every $n$, for which \begin{equation}M_n^\gamma=\int_{[0,1]^d} \exp(\gamma Y_n(x) - \frac{\gamma^2}{2}\E(Y_n(x)^2)) \, dx.\end{equation} So from the discussion above \cref{thm:kahane}, it holds that $M_n^\gamma\to 0$ whenever $\gamma\ge \sqrt{2d}$.
On the other hand, if $x,y\in [0,1]^d$ are at distance greater than $\sqrt{d} 2^{-n}$, then $Y_n(x)$ and $Y_n(y)$ correspond to positions of particles in the random walk that ``branched from each other'' before stage $n$, and so $\E(Y_n(x)Y_n(y))\le -\log(|x-y|)+C_d$ for some  $C_d<\infty$ depending only on $d$. 

Finally, if $(X_\eps;\, {\eps\ge 0})$ is as in the statement of the lemma, then for each $\eps>0$ we can find $n,C$ so that Kahane's inequality applies to $Z_1=\gamma Y_n$ and $Z_2=\gamma X_\eps+C\cN$ with $\cN$ a standard normal, independent of $X_\eps$.\footnote{There is a slight technicality here, since the field $Z_1$ is not actually continuous, and so \cref{thm:kahane} as stated cannot be applied. However, one can check (see \cite[Proof of Lemma 1]{Kah85}) that the proof works for a field such as $Z_1$, for which the chaos measure clearly exists and has a density with well-behaved tails.} This yields that for any $\gamma\ge \sqrt{2d}$ and $F$ bounded, concave and increasing
\begin{equation}\limsup_{\eps\to 0} \E(F(\e^{\gamma C\cN-\gamma^2C^2/2} \mu^\gamma_\eps([0,1]^d)))\le \limsup_{n\to \infty} \E(F(M_n^\gamma))=\E(F(0)).\end{equation}
As a consequence (see for example \cite[Appendix]{DRSV14one}) it holds that $\mu_\eps^\gamma([0,1]^d)\to 0$ in probability as $\eps\to \infty$. 
\end{proof}\medskip

The borderline case $\gamma=\sqrt{2d}$ is referred to as the \emph{critical} regime of GMC, and the case $\gamma>\sqrt{2d}$ the \emph{subcritical regime} accordingly. 

\begin{rmk}\label{rmk:conv_to_0_star}
	Consider the $\star$-scale cut-off approximations $(X_t;\, t\ge 0)$ to a $\star$-scale invariant field as described in the paragraph surrounding \eqref{eqn:starcut}. The same argument as above gives that if $\mu_t^{\gamma}(dx):=\exp(\gamma X_t(x)-(\gamma^2/2)\E(X_t(x)^2)) \, dx$ are the associated approximate chaos measures for $\gamma\ge \gamma_c$, then $\mu_t^\gamma(A)\to 0$ in probability as $t\to \infty$ for any $A\subset \R^d$ compact. In fact, in this case it is easy to see that $\mu_t^{\gamma}(A)$ is martingale for each $A$. This means that the convergence actually holds almost surely. 
\end{rmk}

\subsubsection{Spine decomposition/Rooted measures}\label{sec:spine}
At this point it is useful to discuss an analogous ``spine decomposition'' that one has for Gaussian multiplicative chaos measures. For clarity, and because this case will be used in detail later on, let us assume that $(X_t)_{t\ge 0}$ is a $\star$-scale cut-off approximation to a $\star$-scale invariant field, as in \eqref{eqn:starcut}.

Recall that $(X_t(x); \,t\ge 0)$ is a standard Brownian motion for each $x$, so in particular, $\E(X_t(x)^2)=t$ for all $t,x$. If \begin{equation}\label{eqn:mutg} \mu^\gamma_t(dx):= \exp(\gamma X_t(x)-(\gamma^2/2)t) \, dx\end{equation} then for any bounded subdomain $A\subset \R^d$ one can define a measure $\mathbb{Q}$ such that \begin{equation}\frac{d\Q}{d\P}|_{\cF_t}=|A|^{-1}\mu_t^\gamma(A),\end{equation} where $\cF$ is the filtration generated by $(X_s(x);\, x\in A, s\le t)$. One can also append a uniformly chosen point $x^*$ in $A$ to $\mathbb{P}$ (independently of $\cF$), thereby defining a new measure $\P^*$. 

Since $\zeta_t=\exp(\gamma X_t(x^*)-(\gamma^2/2)t)$ will be a martingale  under $\P^*$ (the classical Brownian motion exponential martingale), one can define a measure $\Q^*$ by \begin{equation}\frac{d\Q^*}{d\P^*}|_{\cF^*_t}=\zeta_t\end{equation} for each $t$. Here as in the branching random walk case $\cF^*_t$ is the filtration generated by $x^*$ together with $(X_s(x); x\in A, s\le t)$. 

Then just as before, one has that $(d\Q^*/d\P^*)|_{\cF_t}=|A|^{-1}\mu_t^\gamma(A)$ so that the $\Q$- and $\Q^*$-laws of $(X_t(x); x\in \R^d, t\ge 0)$ are identical. On the other hand, it is easy to check that:
\begin{itemize}
	\item  the $\Q^*$-law of $x^*$ given $\cF_t$ is proportional to $\mu_t(dx)\I_{A}$;
	\item for any $t$, the $\Q^*$-law of $(X_t(x); x\in A, t\ge 0)$ given $x^*$ is, by Girsanov's theorem, that of a Gaussian process with the same covariance structure as $(X_t(x); x\in A)$, but with mean given by $\gamma\E(X_t(x)X_t(x^*))$ at the point $x$;
	\item the $\Q^*$ law of $(X_t(x^*); t\ge 0)$ is that of a standard Brownian motion plus a drift of $\gamma t$.
\end{itemize}

\subsection{Renormalisation}\label{sec:renorm}

From now on, the focus will be on the case \begin{equation}
\label{eqn:gc}
\gamma=\gamma_c:=\sqrt{2d}.\end{equation} There is a separate and equally interesting story when $\gamma>\sqrt{2d}$, but it is not within the scope of the present article. See \cite[Section 6]{RV14} for an overview.

When $\gamma=\gamma_c$ and $X$ is a field as in \eqref{eqn:kx}, the previous section shows that convolution approximations $\mu_\eps^{\gamma_c}$ as defined in \eqref{eqn:mueps} converge to $0$ as $\eps\to 0$. On the other hand, for any $\gamma<\gamma_c$, $\mu_\eps^\gamma$ converges to a non-trivial limiting measure. Thus one can hope that by giving the sequence $\mu_\eps^{\gamma_c}$ an appropriate ``push'' in the right direction, something interesting may still be obtained. 

The next question is how, precisely, to do this. A simple solution would be to take a limit of measures $c_\eps \mu^{\gamma_c}_\eps$ with some $c_\eps$ deterministic and converging to $\infty$ as $\eps\to \infty$. But of course, $c_\eps$ must be chosen carefully, and it is not so clear a priori what it should be. 

Another approach arises from the following example.

\begin{example}\label{ex:BMDM} 
	
	Recall the setting of \cref{sec:spine}, and note that for each fixed $x\in \R^d$, the martingale $\exp(\gamma X_t(x)-(\gamma^2/2)t)$ is the classical ``exponential martingale'' of $\gamma$ times a Brownian motion. It is standard and easy to check that this martingale is \emph{not} uniformly integrable, and converges to $0$ almost surely as $t\to \infty$.
	
	Therefore, the only way that $\mu_t^\gamma$ can have a non-trivial limit,  is that there are enough \emph{exceptional} points where this martingale is atypically large. In fact, from the discussion in \cref{sec:spine} it can be deduced that if $\mu_t^\gamma$ does converge in $L^1(\mathbb{P})$, then a point $y$ sampled with probability proportional to the limiting measure will have 
	\begin{equation}\label{eqn:thick}
	\lim_{t\to \infty} \frac{X_t(y)}{t} = \gamma
	\end{equation}
	almost surely. This is in contrast to a fixed deterministic point, for which the above limit would correspond to $\lim_{t\to \infty}B_t/t$ and almost surely be 0.
	
	When $\gamma<\gamma_c$ the convergence in $L^1(\P)$ is known, and so there are indeed enough such points to support the measure. When $\gamma=\gamma_c$ however, it can be shown that the Hausdorff dimension of points satisfying \eqref{eqn:thick} is zero, and for any bounded $A\subset \R^d$ 
	\begin{equation}\label{eqn:extrema_basic}
	\limsup_{t\to \infty} \sup_{x\in A} (X_t(x)-\gamma_c t) = -\infty
	\end{equation}
a.s. (See \cref{sec:extrema} for a more refined statement). Note that for the branching random walk case, \eqref{eqn:extrema_basic} follows from the fact that $M_n^{\gamma_c}\to 0$ as $n\to \infty$. The result for $\star$-scale invariant fields can be deduced by a comparison argument, \cite{DRSV14one}, and for general fields by comparison with a modified branching Brownian motion, see e.g. \cite{Aco14}.

So essentially, the reason that $\mu_t^{\gamma_c}$ converges to $0$, is that the martingale $\exp(\gamma_c X_t(x)-(\gamma_c^2/2)t)$ is just too small everywhere. Nevertheless, there is an alternative martingale that is natural to consider. This arises by taking the \emph{derivative}  of  $\exp(\gamma_c X_t(x)-(\gamma_c^2/2)t)$ with respect to the parameter $\gamma$. That is, considering \begin{equation} d_t(x):=(-X_t(x)+\gamma_c t)\exp(\gamma_c X_t(x)-(\gamma_c^2/2) t).\end{equation}
Note that this is indeed a martingale, by its definition as the derivative of a martingale. 

 In fact, if one modifies this slightly and considers 
 \begin{align}\label{def:dtbatb}
 d_t^\beta(x) &: = (-X_t(x)+\gamma_c t+\beta)\exp(\gamma_c X_t(x)-(\gamma_c^2/2)t)\I_{A_t^\beta(x)} \\ A_t^\beta(x) & :=\{(-X_s(x)+\gamma_c s)\ge -\beta \;\;\forall s\in [0,t]\}
 \end{align} instead (in order to work with a positive density) then this is again a martingale. Moreover, for fixed $x$ if one changes measure using this martingale, then $(-X_t(x)+\gamma_c t+\beta)$ under the new measure will have the law of a $3d$-Bessel process started from $\beta$; see \cite{RYbook} for justification of these facts, and much more information on general Bessel processes.
 
 This means, very roughly, that a limit of
 \begin{equation}
 D_t^\beta(dx):= d_t^\beta(x)dx
 \end{equation}
as $t\to \infty$ (if it exists) \emph{could} be supported on points $x$ where $-X_t(x)-\gamma_c t $ goes to $-\infty$. Therefore one can be more hopeful (and it turns out rightly so) that such a limit exists. In fact, since $D_t^\beta(dx)$ and $\mu_t'(dx):= d_t(x)\, dx$ are extremely close for large $\beta$ (by \eqref{eqn:extrema_basic} and \cref{lem:conv_to_0}) then this should actually imply that $\mu_t'$ itself converges.
\end{example}

The analogue of this in the setting of convolution approximations is to consider

\begin{align}\label{eqn:der_norm_eps}
\mu_\eps'(dx) &:= -\frac{d}{d\gamma} \left( \exp(\gamma X_\eps(x)-\frac{\gamma^2}{2}\E(X_\eps(x)^2)) \right) \big|_{\gamma=\gamma_c} \, dx \nonumber \\  
& = \left(- X_\eps(x)+\gamma_c \E(X_\eps(x)^2)\right) \, \exp(\gamma_c X_\eps(x)-\frac{\gamma_c^2}{2}\E(X_\eps(x)^2)) \, dx
\end{align}
and ask if 

\begin{equation}\label{eqn:der_norm}
\lim_{\eps\to 0} \mu_\eps'(dx) 
\end{equation}
exists. If it does, then this in turn provides a guess for a ``deterministic renormalisation'' that may work. Indeed, the discussion in \cref{ex:BMDM} suggests that a limit of the form \eqref{eqn:der_norm} should be ``supported'' on points where $(-X_\eps(x)+\gamma_c X_\eps(x))$ behaves like a 3d-Bessel process at time $\log(1/\eps)$. Since a 3d-Bessel process at time $t$ is typically of size $\sqrt{t}$, a reasonable guess is that
\begin{equation}\label{eqn:sh_norm}
\lim_{\eps\to 0} \sqrt{\log(1/\eps)} \mu_\eps^{\gamma_c}.
\end{equation}
may exist and be non-trivial. This normalisation is known as the ``Seneta--Heyde'' normalisation, and has a well-established counterpart in the branching random walk literature \cite{AiSh}. In the $\star$-scale invariant setting of \cref{ex:BMDM}, one instead tries to take a limit of $ \sqrt{t}\mu^{\gamma_c}_t$ as $t\to \infty$.

Another, related but somewhat more na\"{i}ve, line of reasoning is the following. For $\gamma<\gamma_c$ it is known that the limit measure $\mu^\gamma=\lim_{\eps\to 0}\mu^\gamma_\eps$ exists and is non-trivial, while on the other hand, $\lim_{\eps\to 0} \mu^{\gamma_c}_\eps =0$. Since everything seems to depend pretty nicely on $\gamma$, it is not too hard to believe that the measures $\mu^\gamma$ are in some sense regular with respect to $\gamma$, and converge to $0$ as $\gamma\uparrow \gamma_c$. If this were true, then even though the limit of $\mu^\gamma$ as $\gamma\uparrow \gamma_c$ is trivial, the \emph{rate} at which it approaches zero may not be. This leads one to ask if  

\begin{equation} \label{eqn:derc_norm} \lim_{\gamma\uparrow \gamma_c} \frac{\mu^\gamma}{\gamma_c-\gamma} \end{equation}
 exists in some appropriate sense.  This turns out to be rather tricky to address, but it is natural to expect that, if they exist, \eqref{eqn:derc_norm} and \eqref{eqn:der_norm} should be closely related. Indeed, they correspond to one another up to exchanging the order of limit $\gamma\to \gamma_c$ and $\eps\to 0$. This point will be explored in some detail in \cref{sec:der}.

It turns out, as will be discussed in the next sections, that all of these approaches are essentially equivalent, and correct.

\subsection{Construction of critical Gaussian multiplicative chaos}

\cref{thm:const_gen} of this section is the most general statement to date concerning the convergence of \eqref{eqn:der_norm_eps} and \eqref{eqn:sh_norm}. This final version is due to \cite{JSW19}, and builds on a series of works \cite{DRSV14one,DRSV14two,JS17,HRV18,Pow18chaos} that will be summarised just below.

As in \cite[Definition 5.1]{JSW19}, if $(\mu,(\mu_n; n>0))$ are random measures on some compact set $K$, and $\mu_n(F)\to \mu(F)$ in probability for every continuous function $F$ on $K$, then $\mu_n$ is said to converge to $\mu$ in probability in the weak* sense. 

\begin{theorem}[General construction] 
	\label{thm:const_gen}
	Suppose that $X$ is a log-correlated field as in \cref{sec:general_fields} and that $(X_\eps)_{\eps\ge 0}$ is a convolution approximation to $X$ (defined on the same probability space). Then there exists a non-trivial measure $\mu'$ on $D$ such that for any $K\subset D$ compact, \begin{equation}\label{eqn:const_thm_der} \mu_\eps'|_K\to \mu'|_K \end{equation} in probability in the weak* sense, along any sequence $\eps_n$ converging to $0$. The measure $\mu'$ does not depend on the choice of mollifier used.
	
	 Furthermore, for any such $K$, \begin{equation}(\sqrt{\log(1/\eps)}\,\mu_\eps^{\gamma_c})|_K\to (\sqrt{\frac{2}{\pi}} \, \mu')|_K\end{equation} in probability $\eps\to 0$ (in the same sense).
\end{theorem}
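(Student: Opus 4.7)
The plan is to first prove \eqref{eqn:const_thm_der} in the clean setting of $\star$-scale invariant fields equipped with their $\star$-scale cut-off approximations (where, by \cref{rmk:conv_to_0_star}, the approximations come with exact martingale structure), and then transfer to general convolution approximations of general log-correlated fields via Kahane's inequality (\cref{thm:kahane}). The Seneta--Heyde identity is then deduced by interpreting $\sqrt{\log(1/\eps)}\,\mu_\eps^{\gamma_c}$ and $\mu_\eps'$ as two different reweightings of the same exceptional $\gamma_c$-thick points of the field, with the constant $\sqrt{2/\pi}$ coming out of a Bessel/ballot computation.

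For the $\star$-scale case I would start with the truncated derivative measures $D_t^\beta(dx):=d_t^\beta(x)\,dx$ from \cref{ex:BMDM}. For each bounded Borel $A\subset\R^d$, $D_t^\beta(A)$ is a positive $\cF_t$-martingale of mean $\beta|A|$, and I would prove $L^1(\P)$-convergence by a spine decomposition of the type described in \cref{sec:spine}, now driven by the $d_t^\beta(x^*)/\beta$-martingale instead of the exponential martingale. Under the resulting measure $\Q^*_\beta$: the spine process $(-X_s(x^*)+\gamma_c s+\beta)_{s\le t}$ is (by Girsanov plus the Williams path decomposition) a $3$-dimensional Bessel process started at $\beta$; the rest of the field picks up a smooth shift proportional to $K(\cdot,x^*)$; and a direct second-moment estimate along the spine gives $\sup_t\E_{\Q^*_\beta}[(D_t^\beta(A))^{1+\delta}]<\infty$ for some $\delta>0$. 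The standard spine dichotomy (exactly the $M_n^\gamma$-argument used in the proof of \cref{lem:conv_to_0}) then yields $D_t^\beta(A)\to D_\infty^\beta(A)$ in $L^1(\P)$. Sending $\beta\to\infty$ and using the extremal estimate \eqref{eqn:extrema_basic} to see that $\I_{A_t^\beta(x)}\equiv 1$ on $A$ for $\beta$ large uniformly in $t$, one obtains convergence of $\mu_t'(A)=\int_A d_t(x)\,dx$ to a non-trivial limit $\mu'(A):=\lim_{\beta\to\infty}D_\infty^\beta(A)$.

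For a general log-correlated field $X$ with convolution approximation $(X_\eps)$, I would compare $X_\eps$ with a $\star$-scale invariant field $Y_{\log(1/\eps)}$ whose covariance matches that of $X_\eps$ up to a bounded, smooth remainder — this is exactly where the $H_{\loc}^{d+\eps}$ regularity of $g$ in \eqref{eqn:kx} enters — and apply \cref{thm:kahane} to the bounded concave functionals $m\mapsto 1-e^{-\lambda m}$ evaluated on the derivative measures. This gives tightness of $\mu_\eps'|_K$ along subsequences and shows that any subsequential limit is independent of the mollifier $\psi$, proving \eqref{eqn:const_thm_der}. For the Seneta--Heyde part, still work in the $\star$-scale cutoff setup and introduce the truncated critical measure $\mu_t^{\gamma_c,\beta}(dx):=\I_{A_t^\beta(x)}e^{\gamma_c X_t(x)-(\gamma_c^2/2)t}\,dx$. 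Under the $\mu_t^{\gamma_c,\beta}$-driven spine measure the process $(-X_s(x^*)+\gamma_c s)_{s\le t}$ is a Brownian motion conditioned to stay above $-\beta$, so the classical ballot estimate $\P(B_s\ge -\beta\;\forall s\le t)\sim\sqrt{2/(\pi t)}\,\beta$ gives
$$\sqrt{t}\,\E[\mu_t^{\gamma_c,\beta}(A)]\;\longrightarrow\;\sqrt{\tfrac{2}{\pi}}\,\beta|A|\;=\;\sqrt{\tfrac{2}{\pi}}\,\E[D_\infty^\beta(A)].$$
A truncated second-moment estimate on the spine upgrades this to convergence in probability of $\sqrt{t}\,\mu_t^{\gamma_c,\beta}(A)$ to $\sqrt{2/\pi}\,D_\infty^\beta(A)$, and sending $\beta\to\infty$ while using \eqref{eqn:extrema_basic} to absorb $\mu_t^{\gamma_c}-\mu_t^{\gamma_c,\beta}$ yields the claim for $\star$-scale fields. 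The Kahane comparison of the previous paragraph then transports both the convergence and the universal constant $\sqrt{2/\pi}$ to arbitrary convolution approximations.

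The main obstacle is the second-moment control of $\sqrt{t}\,\mu_t^{\gamma_c,\beta}(A)$: the untruncated second moment of $\sqrt{t}\,\mu_t^{\gamma_c}(A)$ diverges (this is precisely why renormalisation is needed), and the single-point truncation $A_t^\beta(x)$ is by itself not enough to tame it — one must also enforce a compatible barrier on a \emph{second} spine point, leading to a two-point integral against a barrier-conditioned Bessel density whose logarithmic divergence is exactly matched by the $\sqrt{t}$ factor. Carrying out this cancellation uniformly in the mollifier $\psi$ and in the field $X$ is the technical heart of \cite{DRSV14one,DRSV14two,JSW19}, and is what forces the $H_{\loc}^{d+\eps}$ assumption.
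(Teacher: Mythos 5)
Your $\star$-scale part is the same as the paper's: truncated derivative martingales $D_t^\beta$, uniform integrability via the $3$d-Bessel spine and Kahane-type tail control, $\beta\to\infty$ using \eqref{eqn:extrema_basic}, and a ratio/ballot argument for the $\sqrt{2/\pi}$ constant. Two of your claims there are phrased imprecisely (what is actually proved, as in \cref{lem:sh}, is convergence of the \emph{ratio} $\sqrt{t}\,\mu_t^{\gamma_c,\beta}(A)/D_t^\beta(A)$ in $\Q^\beta$-probability, and the paper's second-moment trick truncates $D_t^\beta$ away from a single spine point rather than introducing a second barriered spine), but the intent is right and those details can be repaired.

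The real gap is the transfer from the $\star$-scale cut-off setting to a general log-correlated field with convolution approximation. You propose to match $X_\eps$ with a $\star$-scale cut-off $Y_{\log(1/\eps)}$ and invoke Kahane's inequality (\cref{thm:kahane}) on $m\mapsto 1-e^{-\lambda m}$. This cannot work as stated, for two independent reasons. First, Kahane's inequality controls functionals of $\int\exp(Z-\tfrac12\E Z^2)\,dx$, i.e.\ positive exponential integrals; it says nothing about the derivative-normalised quantity $\int(-X_\eps+\gamma_c\E X_\eps^2)\exp(\cdots)\,dx$, which is signed and not of the required form, so there is no ``Kahane for derivative measures'' to apply. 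Second, even if one restricts to the Seneta--Heyde normalisation (which \emph{is} a positive GMC-type integral, and is exactly where \cref{thm:JS} applies), the comparison hypothesis of \cref{thm:JS} requires $\sup_{|x-y|>\delta}|K_n-\tilde K_n|\to0$; but the covariance of $X_\eps$ converges to $-\log|x-y|+g_X(x,y)$ while that of a $\star$-scale cut-off converges to $-\log|x-y|+g_Y(x,y)$ with a different $g_Y$, so the off-diagonal difference stays $O(1)$ and the hypothesis fails. What actually closes the gap in \cite{JSW19} is not a comparison-of-expectations argument at all but the coupling theorem \cref{thm:decomp}: one constructs a realisation $X=S+R$ on each $K\Subset D$ where $S$ is $\star$-scale invariant and $R$ is an \emph{independent, a.s.\ H\"older-continuous} Gaussian field (this is where $g\in H_{\loc}^{d+\eps}$ is used, through the Sobolev regularity of $C_1-C_2$). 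Once you have this almost-sure decomposition, the derivative and Seneta--Heyde measures for $X_\eps$ factor as a continuous Radon--Nikodym density $e^{\gamma_c R-\tfrac{\gamma_c^2}{2}\E R^2}$ times the corresponding measures for $S$, and the general result follows from the $\star$-scale case by the elementary continuity lemma \cite[Lemma 5.2(iii)]{JSW19}. This decomposition step, together with the mollifier-independence part of \cref{thm:JS} (which does compare different convolutions of the \emph{same} field and is legitimately a Kahane argument) and the Seneta--Heyde-to-derivative transfer of \cite{Pow18chaos}, is the genuinely non-elementary content of \cref{thm:const_gen}, and it is absent from your outline.
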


\begin{rmk}\label{rmk:conv_starscale}
	The same result holds for $\star$-scale invariant measures, with the analogous approximations $\sqrt{t}\mu^{\gamma_c}_t$ and $\mu_t'$ described in \cref{sec:renorm}. In this case the convergence of $\mu_t'$ is actually a.s.\ for the topology of weak convergence of measures (due to the underlying martingale structure), while the convergence of $\sqrt{t}\mu^{\gamma_c}_t$ only holds in probability. It should be expected that the latter convergence \emph{cannot} be lifted to almost sure convergence. Indeed in the branching random walk setting of this result, \cite{AiSh}, it is shown that there almost surely exists a sequence of $t$ along which convergence does not hold (although this has not been proven for Gaussian multiplicative chaos measures). 
	
\end{rmk}
\subsubsection{History}
As already hinted at, the history behind this construction really begins with the work of \cite{BK04} concerning convergence of the so-called derivative martingale for the branching random walk, and the subsequent work \cite{AiSh} on the Seneta--Heyde normalisation. Motivated by these ideas, the picture for critical Gaussian multiplicative chaos has gradually evolved towards \cref{thm:const_gen}. The timeline is roughly as follows.

\begin{enumerate}
	\item[(2014)] Duplantier, Rhodes, Sheffield and Vargas \cite{DRSV14one,DRSV14two} construct critical Gassian multiplicative chaos for $\star$-scale invariant fields as in \eqref{eqn:ssi}. This is defined as the almost sure limit of $\mu_t'$ as $t\to \infty$, where $\mu_t'$ is as described in \cref{ex:BMDM}, making use of the fact that approximate masses of fixed sets are martingales in this set-up. The authors show further that $\sqrt{t}\mu^{\gamma_c}_t$ (again defined using the $\star$-scale cut-off approximations) converges in probability to $(\sqrt{\pi/2}) \mu'$ as $t\to \infty$. Finally, they allow some relaxation on the need for $k$ to have compact support. This allows them, among other things, to consider and prove the same results for the planar Gaussian free field; see \cite{DRSV14two}.

	\item[(2015)] Huang, Rhodes and Vargas \cite{HRV18} use a comparison argument, in the case of the planar GFF and the GFF  on the unit circle \eqref{eqn:gff_cov_circle}, to show that in the Seneta--Heyde normalisation \eqref{eqn:sh_norm} with convolution approximations to the field, the same limit $(\sqrt{2/\pi})\mu'$ defined in \cite{DRSV14one, DRSV14two} can be obtained. 
	\item[(2016)] At essentially the same time, Junnila and Saksman prove a general comparison theorem \cite{JS17} that allows for comparison between different approximations in the Seneta--Heyde normalisation. This shows that \emph{if} the limit \eqref{eqn:sh_norm} (or its analogue for a non-convolution approximation) exists for one approximation scheme, it also exists for other comparable ones, and the limiting measures will be the same. In particular, it extends the result of \cite{DRSV14two} (the Seneta--Heyde normalisation) to a wider class of approximations (e.g., convolution) for $\star$-scale invariant fields and the Gaussian free field.
	\item[(2017)] For the other, derivative, normalisation scheme \eqref{eqn:der_norm_eps}, \cite{Pow18chaos} again considers $\star$-scale invariant fields and the planar GFF. The article shows that if the limit \eqref{eqn:sh_norm} in the Seneta--Heyde normalisation for convolution approximations exists, then it also exists in the derivative normalisation \eqref{eqn:der_norm_eps}, and the two limits agree up to the constant $\sqrt{2/\pi}$. The proof can be extended to the GFF on the unit circle, as explained in \cite{APS18two}.
	
So at this point, \cref{thm:const_gen} is known for $\star$-scale invariant fields, the GFF on the circle, and the planar GFF. However, existence of limits is not known in \emph{any} scheme for more general fields, so the comparison theorems of \cite{JS17,Pow18chaos} cannot be applied.

	\item[(2018)] In an attempt understand \eqref{eqn:der_norm} (more on this in the next section) Aru, Sep\'{u}lveda and the current author develop a new construction of GMC measures for the planar Gaussian free field \cite{APS18one}, using the approximation \eqref{eqn:cle_approx}. With this approximation, they show convergence to the critical measure in both the Seneta--Heyde and derivative normalisations. 

	\item[(2019)] Finally, Junnila and Saksman return to the scene, now together with Webb \cite{JSW19}, and armed with a clever decomposition theorem. This essentially says that you can write any log-correlated field as in \eqref{eqn:kx} as a sum of a $\star$-scale invariant field and an independent continuous Gaussian field. As a consequence, together with previous results, one obtains \cref{thm:const_gen}.
\end{enumerate}

\subsubsection{Ideas behind the proof of \cref{thm:const_gen}}

(I)\emph{ Convergence in the derivative normalisation for $\star$-scale invariant fields.} \\

Let us consider the approximate measures $\mu_t'$ as in \cref{ex:BMDM}, when the underlying field $X$ is $\star$-scale invariant and $(X_t)_{t\ge 0}$ are the $\star$-scale cut off approximations to $X$ with covariances \eqref{eqn:starcut}. By standard results concerning convergence of measures, see for example \cite{DRSV14one}, it is enough to show that  for any $A\subset \R^d$ compact,
\begin{equation}
\label{eqn:der_conv_mass}
\mu_t'(A)= \int_A (-X_t(z)+\gamma_c \var(X_t(x))\e^{\gamma_c X_t(x)-\frac{\gamma_c^2}{2}t}\, dx
\end{equation}
has an almost sure limit as $t\to \infty$. Recall that:
\begin{itemize}
	\item[-]for any $x\in \R^d$, $(X_t(x); t\ge 0)$ has the law of a standard linear Brownian motion;
	\item[-]$(X_t(x);\, t\ge 0)$ is a martingale with respect to the filtration $(\cF_t;\, t\ge 0)$; where $\cF_t=\sigma((X_s(x);\, x\in A, s\in [0,t] ))$.
\end{itemize}

Exchanging integral and expectation means that $\mu_t'(A)$ is itself a martingale for the filtration $(\cF_t;\, t\ge 0)$. This is  good news for taking limits, but on the other hand, $\mu_t'(A)$ need not be positive and  so convergence is not guaranteed. Note that this point really does deserve careful consideration, since the aim is to construct a positive measure in the end.

In an attempt to get around this, one can turn to the measures 
\begin{equation}
\label{eqn:Dtb}
D_t^\beta(dx)=d_t^\beta(x) dx=(-X_t(x)+\gamma_c t+\beta)\e^{\gamma_ cX_t(x)-\frac{\gamma_c^2}{2}t}\I_{A_t^\beta(x)} \, dx \text{ for } \beta>0,
\end{equation}
where $A_t^\beta(x)$ defined in \eqref{def:dtbatb} is the event that $(-X_s(x)+\gamma_c s+\beta)$ stays non-negative for $0\le s\le t$. The advantage is that $D_t^\beta(A)$ is then non-negative (by definition). Moreover, since $d_t^\beta(x)$ is a martingale with respect to $(\cF_t; t\ge 0)$ for each $x$ (as explained in \cref{ex:BMDM}), $D_t^\beta(A)$ is also a martingale.

Therefore, for any $\beta>0$, $D_t^\beta(A)$ has an almost sure positive limit $D^\beta(A)$ as $t\to \infty$.  Furthermore, it is clear that $D^\beta(A)$ must be increasing in $\beta$ and so also have an almost sure positive limit $D(A)$ as $\beta \to \infty$. But what does this say about the convergences of $\mu_t'(A)$? The key is that by \eqref{eqn:extrema_basic}, 
\begin{equation}
\P( \exists \beta\in (0,\infty) \text{ such that } \mu_t'(A)=D_t^\beta(A)-\beta \mu_t^{\gamma_c}(A) \text{ for all } t)=1,
\end{equation}
while on the other hand by \cref{rmk:conv_to_0_star}, $\beta \mu_t^{\gamma_c}(A)\to 0$ almost surely. Combining all of this,
it follows that $\mu_t'(A)$ converges almost surely to $D(A)=:\mu'(A)$ as $t\to \infty$.

So all that is left to complete the picture in this setting is to show that $\mu'(A)$ is non-trivial. Observe that for this, it suffices to show that $D_t^\beta(A)$ is uniformly integrable for each $\beta$. Indeed, if this is the case then the martingale $D_t^\beta(A)$ will converge in $L^1(\P)$ for each $\beta$, and the limit $D^\beta(A)$ will have expectation $\beta |A|$  (since $\E(d_t^\beta(x)=\beta |A|$ for every $x$).  $D^\beta(A)$ will therefore be non-trivial for each $\beta$, and increasing as $\beta\to \infty$, meaning that $\mu'(A)$ is certainly non-trivial.

Notice that $\mu_t'(A)$ then necessarily has infinite expectation, since it must be greater than $\beta|A|$ for any $\beta$. In particular, the convergence $\mu_t'(A)\to \mu'(A)$ \emph{does not} hold in $L^1(\P)$. \\

\noindent (II)\emph{ Spine decomposition/rooted measures.}\\

 To prepare for the proof of uniform integrability (and later proofs), it is necessary to now discuss an analogue of the ``spine decomposition'' associated with derivative measures and martingales. This is the natural extension of \cref{sec:spine}. 

Just as before, fix $A\subset D$ compact and define $\mathbb{P}^*$ to be the usual $\mathbb{P}$ law of $(X_t(x);\, x\in \R^d, t\ge 0)$ together with a random point $x^*$ chosen proportionally to Lebesgue measure in $A$. Then from  \cref{ex:BMDM}, it follows that $d_t^\beta(x^*)$ is a martingale with respect to the filtration $(\cF_t^*; \, t\ge 0)$ where $\cF_t^*$ for each $t$ is the $\sigma$-algebra generated by $x^*$ together with $(X_s(x);\, x\in A, s\in [0,t])$. Since the expectation of $d_t^\beta(x^*)$ is $\beta$, it is possible to define a measure $\mathbb{Q}^*$ via 
\begin{equation}\label{eqn:starrn}
\frac{d\Q^{\beta,*}}{d\P^*}|_{\cF_t^*}:= \frac{d_t^\beta(x^*)}{\beta} \; \; \forall t.
\end{equation}

Again observe that 
\begin{equation}\label{def:qbeta}
\frac{d\Q^{\beta,*}}{d\P^*}|_{\cF_t}:= \frac{D_t^\beta(A)}{\beta|A|} \; \forall t,
\end{equation}
so if $\mathbb{Q}^\beta$ denotes the marginal law of $(X_t(x);\, x\in \R^d, t\ge 0)$ under $\mathbb{Q}^{\beta,*}$, then 
the Radon--Nikodym derivative $(d\Q^\beta/d\P)|_{\cF_t}$ is proportional to $D_t^\beta(A)$ for every $t$. 

One can also easily compute that 
\begin{equation}\label{eqn:xgivenf}
\Q^{\beta,*}(\{x^*\in B\}\, | \,\cF_t)=\frac{D_t^\beta(B)}{D_t^\beta(A)}, \; \forall B\subset A;
\end{equation}
in other words, given $\cF_t$ and under $\Q^{\beta,*}$, $x^*$ is chosen according to $D_t^\beta$ in $A$. On the other hand, the $\mathbb{Q}^{\beta,*}$-marginal law of $x^*$ is just the uniform distribution in $A$. 

Finally it follows from the discussion in \cref{ex:BMDM}, that given $x^*$, the conditional $\Q^{\beta,*}$ law of $(-X_t(x^*)+\gamma_c t + \beta; t\ge 0)$ is that of a 3d-Bessel process started from $\beta$.
\\

\noindent \emph{Conclusion of }(I)

\begin{proof}[Proof that $D_t^\beta(A)$ is uniformly integrable]
	By definition of $\Q^\beta$, showing that $D_t^\beta(A)$ is uniformly integrable amounts to showing that $\Q^\beta(D_t^\beta(A)>K)\to 0$ as $K\to \infty$, uniformly in $t$ (cf. the discussion below \cref{lem:conv_to_0}). They key idea is to replace $\Q^\beta$ by $\Q^{\beta,*}$ in this probability (since they give exactly the same mass to the event in question) and then decompose according to whether the spine $(-X_t(x^*)+\gamma_c t +\beta)$ behaves reasonably, or not. Let us assume without loss of generality that $A\subset [0,1]^d$.
	
	Recall that $(-X_t(x^*)+\gamma_c t +\beta)$ has the law of a 3d-Bessel process under $\Q^{\beta,*}$, which at time $t$ is typically of order $\sqrt{t}$. In fact, one can be much more precise. It follows from \cite{Motoo}, that if 
	\begin{equation}
	E_{t}^{R}:=\{R(1+\sqrt{s\log(1+s)})\le (-X_s(x^*)+\gamma_c s +\beta) \le R\frac{\sqrt{s}}{\log(2+s)^2} \; \forall s\in[0,t]\}
	\end{equation}
then $\Q^{\beta,*}(E_\infty^R)\to 1$	as $R\to \infty$. Since by Markov's inequality, one can write $\Q^{\beta,*}(D_t^\beta(A)\ge K)\le \Q^{\beta,*}(E_t^R)+K^{-1}\E_{Q^{\beta,*}}(\I_{E_t^R}D_t^\beta(A))$ for any $R$, it therefore suffices to show that for any fixed $R$:
\begin{equation}\label{eqn:ui_bound}
\sup_t\E_{\Q^{\beta,*}}(\I_{E_t^R}D_t^\beta(A))<\infty.
\end{equation}
So let us now show this. For each $y\in A$, write $t_0^*(y)=-\log(|x^*-y|)$. The important fact is that under $\P^*$ and given $(x^*, (X_s(z); s\le t_0^*(y), z\in A))$, 
\begin{equation}\label{eqn:cond_ind} d_t^\beta(y)-d_{t_0^*(y)}^\beta(y) \text{ and } d_t^\beta(x^*)-d_{t_0^*(y)}^\beta(x^*) \text{ are conditionally independent} \end{equation} (and both with conditional expectation $0$). Indeed this follows since $x^*$ is chosen independently of $(X_s(x);\, x\in A, s\ge 0)$ under $\P^*$, since $d_t^\beta(x)$ is a martingale for each $x\in A$, and by the nice decorrelation property of the approximations $X_s$ to $X$: recall the discussion below \eqref{eqn:starcut}. Thus,
\begin{align*}
& \E_{\qbs}(\I_{E_t^R}D_t^\beta(A)) &&  \\
& = \beta^{-1}\int_A \E_{\P^*}( \I_{E_t^R} d_t^\beta(x^*)d_t^\beta(y)) \, dy  && \textit{by Fubini and \eqref{eqn:starrn}} \\
& \le \beta^{-1} \int_A \E_{\P^*}(\I_{E_{t_0^*(y)}^R}d_t^\beta(x^*)d_t^\beta(y) ) dy && \textit{since $E_{s}^R$ is increasing }\\
& = \beta^{-1} \int_A \E_{\P^*}(\I_{E_{t_0^*(y)}^R}d_{t_0^*(y)}^\beta(x^*)d_{t_0^*(y)}^\beta(y) ) dy && \textit{by \eqref{eqn:cond_ind}} \\
& \lesssim \E_{\P^*}(\int_A R\frac{\sqrt{t_0^*(y)}}{\log(2+t_0^*(y))^2} \e^{-\gamma_c R\sqrt{t_0^*(y)\log(1+t_0^*(y))}}\e^{dt_0^*(y)} \, d^\beta_{t_0^*(y)} \, dy) && \textit{ by definition of  $E_s^R$}\\
 &\lesssim \int_{B(0,1)} \frac{\sqrt{\log(|w|^{-1})}}{\log(2+\log(|w|^{-1}))^2} \e^{-\sqrt{2d}R\sqrt{\log(|w|^{-1})\log(1+\log(|w|^{-1}))}}\e^{d\log(|w|^{-1})}  \, dw &&
\end{align*}
where the implied constants in the final two inequalities depend only on the fixed quantities $A,R,\beta$. The final line follows from the fact that $x^*$ is chosen according to Lebesgue measure in $A$ under $\P^*$.

So, all that remains to check is that this integral is finite. This is easily verified by changing to hyper-spherical coordinates in $\R^d$, which nicely cancels the blowing up term $\e^{d\log(|w|^{-1})}$. Since the rest is very well behaved as $|w|\to 0$, it is not hard to conclude: see \cite{DRSV14one} for a step-by-step proof.

\end{proof}

\noindent (III) \emph{The Seneta--Heyde normalisation for $\star$-scale invariant fields.}\\

 The convergence of $\sqrt{t}\mu_t'$ as in \cref{rmk:conv_starscale} follows from a special case of the next lemma. 

\begin{lemma}\label{lem:sh}
Take any $A\subset \R^d$ compact. Then if $F$ is continuous, positive and bounded in $A$, the convergence 
	\begin{equation}\label{eqn:sh}
	\sqrt{t} \; \frac{\int_A \e^{\gamma_c X_t(x)-\frac{\gamma_c^2}{2}t} F(\frac{- X_t(x)+{\gamma_c}t}{\sqrt{t}}) \, dx }{\mu_t'(A)} \to \sqrt{\frac{2}{\pi}}\, \E(F(R_1))
	\end{equation}
 holds in probability as $t\to \infty$, where $R_1$ is a Brownian meander at time $1$.
\end{lemma}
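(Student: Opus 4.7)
The plan is to use the rooted/spine measure $\qbs$ from \cref{sec:spine}, combined with a two-spine second-moment argument for concentration. Fix $\beta > 0$, set $R_t(x) := (-X_t(x) + \gamma_c t + \beta)/\sqrt{t}$, and let $N_t^\beta$ denote the numerator of \eqref{eqn:sh} with an extra indicator $\I_{A_t^\beta(x)}$ inserted. Then $d_t^\beta(x) = \sqrt{t}\,R_t(x)\,\e^{\gamma_c X_t(x) - \gamma_c^2 t/2}\I_{A_t^\beta(x)}$, so combining with \eqref{eqn:xgivenf} rewrites the truncated ratio as a conditional expectation under the rooted law:
\[
\frac{N_t^\beta}{D_t^\beta(A)} \;=\; \E_{\qbs}\!\left[\frac{F(R_t(x^*) - \beta/\sqrt{t})}{R_t(x^*)}\,\bigg|\,\mathcal{F}_t\right].
\]

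Under $\qbs$ the process $(\sqrt{s}\,R_s(x^*))_{s\ge 0}$ is a $3$d Bessel process from $\beta$, hence by Brownian scaling $R_t(x^*) \to R_1^{(3)}$ in distribution (the Bessel-$3$ at time $1$ from $0$). The uniform bound $\E_{\qbs}[1/R_t(x^*)] \le \sqrt{2/\pi}$, easily checked from $\E[1/|B_t|] = \sqrt{2/(\pi t)}$ for $3$d Brownian motion together with monotonicity in the starting point, allows dominated convergence and gives
\[
\E_{\qbs}\!\left[N_t^\beta/D_t^\beta(A)\right] \longrightarrow \E\!\left[F(R_1^{(3)})/R_1^{(3)}\right] \;=\; \sqrt{2/\pi}\,\E[F(R_1)],
\]
the final equality coming from a direct density comparison (Maxwell $\sqrt{2/\pi}\,r^2 e^{-r^2/2}$ for $R_1^{(3)}$ versus Rayleigh $re^{-r^2/2}$ for the meander at time $1$).

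The main obstacle is upgrading mean convergence to convergence in probability. I plan to bound $\mathrm{Var}_{\Q^\beta}(N_t^\beta/D_t^\beta(A))$ via a two-spine extension of \cref{sec:spine}: enlarge $\P^*$ by an independent uniform $y^* \in A$ and tilt by $d_t^\beta(x^*)d_t^\beta(y^*)/\beta^2$, defining a two-rooted measure $\Q^{\beta,**}$. Then
\[
\E_{\Q^\beta}\!\left[\bigl(N_t^\beta/D_t^\beta(A)\bigr)^2\right] \;=\; \E_{\Q^{\beta,**}}\!\left[\frac{F(R_t(x^*))F(R_t(y^*))}{R_t(x^*)\,R_t(y^*)}\right].
\]
The $\star$-scale decorrelation property (paragraph below \eqref{eqn:starcut}) gives that, conditionally on $t_0^* := -\log|x^*-y^*|$, the two Bessel-type paths are independent for $s > t_0^*$. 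Pairs at macroscopic distance therefore contribute $\bigl(\E[F(R_1^{(3)})/R_1^{(3)}]\bigr)^2$ in the limit, while the contribution from microscopic pairs is controlled by exactly the same hyperspherical volume estimate that appears in the uniform integrability proof of step~(I) and vanishes. This yields $\mathrm{Var}_{\Q^\beta}(N_t^\beta/D_t^\beta(A)) \to 0$; since the ratio is $\mathcal{F}_t$-measurable and $D_t^\beta(A) \to D^\beta(A) > 0$ almost surely, a standard Chebyshev argument transfers convergence in $\Q^\beta$-probability to $\P$-probability.

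To conclude, by step~(I) the identity $\mu_t'(A) = D_t^\beta(A) - \beta\mu_t^{\gamma_c}(A)$ holds for all $t \ge 0$ once $\beta$ exceeds the (a.s.\ finite) random bound $\sup_{x\in A,\, s\ge 0}(X_s(x)-\gamma_c s)$, and $\beta\mu_t^{\gamma_c}(A)\to 0$ a.s. Bounding $|N_t - N_t^\beta| \le \|F\|_\infty\bigl(\sqrt{t}\mu_t^{\gamma_c}(A) - \sqrt{t}\!\int_A \e^{\gamma_c X_t(x)-\gamma_c^2 t/2}\I_{A_t^\beta(x)}\,dx\bigr)$, applying the $F\equiv 1$ specialisation of the preceding steps, and letting $\beta \to \infty$ so that $D^\beta(A) \uparrow \mu'(A)$, one obtains the lemma.
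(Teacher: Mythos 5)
Your overall strategy -- truncate with the indicator $\I_{A_t^\beta(x)}$, compute first and second $\Q^\beta$-moments of the ratio, invoke Markov's inequality, transfer from $\Q^\beta$- to $\P$-probability, then remove the truncation by letting $\beta \to \infty$ -- is exactly the paper's. Your first-moment computation is in fact a little cleaner than the paper's: you work directly with the Bessel$(3)$ law of $(-X_s(x^*)+\gamma_c s+\beta)_s$ under $\qbs$ and the Maxwell/Rayleigh density identity $\E[F(R_1^{(3)})/R_1^{(3)}]=\sqrt{2/\pi}\,\E[F(R_1)]$, rather than going through Girsanov and the meander-as-conditioned-Brownian-motion characterisation. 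That part is correct.

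The gap is in the second-moment step. First, tilting $\P^{**}$ by $d_t^\beta(x^*)d_t^\beta(y^*)/\beta^2$ does not define a probability measure, because $\E_{\P^{**}}[d_t^\beta(x^*)d_t^\beta(y^*)]=|A|^{-2}\E_\P[D_t^\beta(A)^2]\neq\beta^2$ ($d_t^\beta(x)$ and $d_t^\beta(y)$ are positively correlated for nearby $x,y$). Worse, the quantity you want is \emph{not} the plain second moment of $N_t^\beta$ under $\P$: since $d\Q^\beta/d\P|_{\cF_t}\propto D_t^\beta(A)$, one has
\[
\E_{\Q^\beta}\!\bigl[(N_t^\beta/D_t^\beta(A))^2\bigr] \;=\; \tfrac{1}{\beta|A|}\,\E_\P\!\bigl[(N_t^\beta)^2/D_t^\beta(A)\bigr],
\]
with a $D_t^\beta(A)$ left in the \emph{denominator}. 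Your two-spine change of measure removes it entirely, so the displayed equality
$\E_{\Q^\beta}[(N_t^\beta/D_t^\beta(A))^2]=\E_{\Q^{\beta,**}}[F(R_t(x^*))F(R_t(y^*))/(R_t(x^*)R_t(y^*))]$
is false for the $\Q^{\beta,**}$ you constructed. The identity does hold if $\Q^{\beta,**}$ is instead defined by \emph{extending} $\Q^{\beta,*}$ with a second point $y^*$ drawn, conditionally on $\cF_t$, independently of $x^*$ from $d_t^\beta(\cdot)/D_t^\beta(A)$ (so that the $\cF_t$-marginal has Radon--Nikodym density proportional to $D_t^\beta(A)$, not $D_t^\beta(A)^2$). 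But under \emph{that} measure, the joint dynamics of $(X_s(x^*))_s$ and $(X_s(y^*))_s$ are no longer two independent Bessel$(3)$ paths after the decorrelation time $t_0^*$: conditionally on $(x^*,y^*)$ the field is weighted by $d_t^\beta(x^*)d_t^\beta(y^*)/D_t^\beta(A)$, and the $1/D_t^\beta(A)$ factor is a global functional that couples the two spines. So the asserted factorisation of "pairs at macroscopic distance" needs justification -- and supplying it is precisely the technical core of the paper's argument (it proceeds differently: it writes $(F_t^\beta/D_t^\beta(A))^2$ as $\E_{\qbs}[(F_t^\beta/D_t^\beta(A))\,L_t^\beta(x^*)]$ with a \emph{single} spine, introduces the cut-off scale $h_t=t^{5/12}$, excises the ball $B(x^*,\e^{-h_t})$ from $D_t^\beta$, and conditions on $\cG_t^*=\sigma(x^*,(X_s)_{s\le h_t})$ to obtain conditional independence of $\tilde F_t^\beta/\tilde D_t^\beta$ and $L_t^\beta(x^*)$). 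Your sketch skips this and cites only the hyperspherical volume bound, which controls the \emph{first}-moment uniform integrability but not the correlated second-moment estimate.

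The concluding step (transfer to $\P$ via $\mu_t'(A)=D_t^\beta(A)-\beta\mu_t^{\gamma_c}(A)$ for large random $\beta$, lower-bounding $D^\beta(A)$, and removing the truncation) is sound.
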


In particular, taking $F\equiv 1$, it follows that 
\begin{equation}
\frac{\sqrt{t} \mu_t^{\gamma_c}(A)}{\mu_t'(A)}\to \sqrt{2/\pi}
\end{equation}
in probability as $t\to \infty$. Together with fact that $\mu_t'(A)\to \mu'(A)$ almost surely as $t\to \infty$, this implies that $\sqrt{t}\mu_t^{\gamma_c}(A)$ converges in probability to $(\sqrt{2/\pi}) \mu'(A)$. As before, this suffices to show convergence in probability of the measures $\sqrt{t}\, \mu_t^{\gamma_c}$ to $(\sqrt{2/\pi}) \mu'$. 

\begin{proof}[Proof of \cref{lem:sh}]

The backbone of this argument is based on its branching random walk analogue in \cite{AiSh}, although there are specific parts of the analysis that are both easier and harder in the continuum setting. The lemma with $F\equiv 1$ is proved in \cite{DRSV14two}, but it is essentially identical for general continuous and bounded $F$.

 Write 
\begin{equation} F_t^\beta = \int_A \1_{A_t^\beta(x)} \e^{\gamma_c X_t(x)-\frac{\gamma_c^2}{2}t} F(\frac{- X_t(x)+\gamma_ct}{\sqrt{t}}) \, dx.\end{equation} Recalling the definition \eqref{def:qbeta}, the idea is to show that 
\begin{equation}\label{eq:fm}
\sqrt{t}\, \mathbb{E}_{\Q^\beta}(\frac{F_t^\beta}{D_t^\beta(A)})=\sqrt{\frac{2}{\pi}}\E(F(R_1))+o(1)
\end{equation}	and 
\begin{equation}\label{eq:sm}
t \, \mathbb{E}_{\Q^\beta}((\frac{F_t^\beta}{D_t^\beta(A)})^2)\le \frac{2}{\pi}\E(F(R_1))^2+o(1)
\end{equation}
as $t\to \infty$. By Markov's inequality, \eqref{eq:fm} and \eqref{eq:sm} imply that for every $\beta$, $\sqrt{t} (F_t^\beta/D_t^\beta(A))$ converges to $\sqrt{2/\pi}$ in $\Q^\beta$-probability as $t\to \infty$. This means by \eqref{def:qbeta}, that for any $\beta, \delta>0$:
\begin{equation}
\P \left( D_t^\beta(A)^{-1} \I_{\{|\sqrt{t} (F_t^\beta/D_t^\beta(A))-\sqrt{2/\pi}|>\delta\}}\right) \to 0 \text{ as } t\to \infty.
\end{equation}
On the other hand, by \eqref{eqn:extrema_basic} and the fact that $\lim_{\beta\to \infty} \lim_{t\to \infty} D_t^\beta(A)=\mu'(A)$ is non-trivial, one can take $\beta$ large enough and $\eta$ small enough that the events \begin{equation}\left\{\frac{F_t^\beta}{D_t^\beta(A)}=\frac{\int_A \e^{\gamma_c X_t(x)-\frac{\gamma_c^2}{2}t} F(\frac{- X_t(x)+{\gamma_c}t}{\sqrt{t}}) \, dx }{\mu_t'(A)+\beta \mu_t^{\gamma_c}(A)} \;\; \forall t\ge 0\right\} \text{ and } \left\{D^\beta(A)=\lim_{t\to \infty} D_t^\beta(A)\ge \eta\right\} \end{equation}
occur with probability arbitrarily close to one. Since $\mu_t^{\gamma_c}(A)\to 0$ as $t\to \infty$, \cref{lem:sh} follows straightforwardly from these observations.
  Let us omit the technical details, that can be found in \cite[Appendix B]{APS18one} (or in \cite{DRSV14two} for $F\equiv 1$, which is almost identical).\\
  
  For the first moment estimate \eqref{eq:fm}, the argument is pretty neat. Simply write
  \begin{equation}\label{eq:fmint} \sqrt{t}\, \mathbb{E}_{\Q^\beta}(\frac{F_t^\beta}{D_t^\beta(A)}) =\sqrt{t}\frac{\E_{\P}(F_t^\beta)}{\beta|A|}= \frac{1}{|A|}\int_A \frac{\sqrt{t}}{\beta}\E_\P (\I_{A_t^\beta(x)} \e^{\gamma_c X_t(x)-\frac{\gamma_c^2}{2}t} F(\frac{- X_t(x)+{\gamma_c}t}{\sqrt{t}})) \, dx \end{equation}
  and observe that the integrand on the right-hand side does not depend on $x$. Indeed the law of $X_t(x)$ under $\P$ is that of a standard linear Brownian motion for each $x$. Moreover by Girsanov's theorem, after changing measure with the Radon--Nikodym martingale $\exp(\gamma_c X_t(x)-(\gamma_c^2/2)t)$, the law of $(-X_t(x)+\gamma_c t)$ becomes just that of a standard linear Brownian motion $B$. Writing $\mathbf{P}$ for the law of $B$, the right-hand side of \eqref{eq:fmint} can be expressed as 
  \begin{equation}
\frac{\sqrt{t}}{\beta}\E_{\mathbf{P}}(F(t^{-1/2} B_t)\I_{\{\inf_{s\in[0,t]}B_s \ge -\beta\}})  \overset{t\to \infty}{=}\sqrt{\frac{\pi}{2}}\E_{\mathbf{P}}(F(t^{-1/2} B_t)\, | \, \inf_{s\in[0,t]} B_s \ge -\beta)+o(1),  \end{equation}
where the equality follows since  $\mathbf{P}(\inf_{s\in[0,t]}B_s \ge -\beta) \sim \sqrt{\pi\beta^2/2t}$ as $t\to \infty$. \eqref{eq:fm} then follows by (scaling and the) characterisation \cite{DIM77} of a Brownian meander on $[0,1]$ as the limit as $\eps\to 0$ of Brownian motion conditioned to stay above $-\eps$ on $[0,1]$.\\

 The second moment bound \eqref{eq:sm} is rather more complicated, but is based around one clever trick. That is, to notice that 
 \begin{equation}\label{eqn:ltb}
 \frac{F_t^\beta}{D_t^\beta(A)}:=\Q^{\beta,*}(L_t^\beta(x^*)\, | \, \cF_t)\; \text{ where } \; L_t^\beta(x^*):= \frac{F(t^{-1/2}(- X_t(x^*)+\gamma_c t))}{-X_t(x^*)+\gamma_c t+ \beta}.
 \end{equation} 
 Indeed this just follows from the definition of $(F_t^\beta, D_t^\beta(A))$ and the expression \eqref{eqn:xgivenf}  describing the conditional density of $x^*$ given $\cF_t$.
 
 This means, by the tower property of conditional expectation,  that the left-hand side of \eqref{eq:sm} can be rewritten as \begin{equation}\label{eqn:sm_trick}
 t\, \E_{\Q^{\beta,*}}(\frac{F_t^\beta}{D_t^\beta(A)} L_t^\beta(x^*)).
 \end{equation}
Recall the aim is to bound this by $ (2/\pi)\E(F(R_1))^2+o(1)$. The basic strategy is to say that the behaviour of the field close to the point $x^*$ will not have a significant effect on the large-$t$ behaviour of $F_t^\beta/D_t^\beta(A)$: this is, essentially, because $\lim_t D_t^\beta$ does not have any atoms. As a result, one can (roughly speaking) factorise the expectation in \eqref{eqn:sm_trick} into the two parts $\E_{\Q^{\beta,*}}(F_t^\beta/D_t^\beta(A))$ and $\E_{\Q^{\beta,*}}(L_t^\beta(x^*))$, while incurring only an $o(1)$ error in $t$. Applying the first moment asymptotic \eqref{eq:fm} then yields the conclusion. This argument, although simple in principle, becomes quickly quite technical. Below the technical details are again omitted, and the reader is referred to \cite{DRSV14two} for a more thorough treatment. 
 
The first step is to reduce the problem to showing that
 \begin{equation}\label{eqn:sm_reduced}
 t \, \E_{\Q^{\beta,*}}(\frac{F_t^\beta}{D_t^\beta(A)}\I_{E_t} L_t^\beta(x))\le \frac{2}{\pi }\E(F(R_1))^2+o(1)
 \end{equation}
as $t\to \infty$, for any sequence $E_t$ of events with $\qbs(E_t)\to 1$ as $t\to \infty$. The advantage of this is that situations where $X_t(x^*)$ behaves abnormally can be ignored. The reduction itself follows from some straightforward bounds: it is shown in \cite{DRSV14two} with $F\equiv 1$, but exactly the same proof works when $F$ is bounded.

Next, setting 
\begin{equation}h_t=t^{5/12}\end{equation} (note that this is smaller than the typical growth of a 3d-Bessel process but still blows up with $t$) one can decompose 
\begin{equation} D_t^\beta(A) = \int_{A\setminus B(x^*,\e^{-h_t})} d_t^\beta(x)dx + \int_{B(x^*,\e^{-h_t})}  d_t^\beta(x) \, dx:=\tilde D_t^\beta+\int_{B(x^*,\e^{-h_t})}  d_t^\beta(x) \, dx\end{equation}
and similarly for $F_t^\beta$, thus defining $\tilde{F}_t^\beta$. Then another technical argument, basically using the fact that $\e^{-h_t}$ decays sufficiently fast with $t$ to say that $(F_t^\beta- \tilde{F}_t^\beta)$ does not contribute to the left-hand side of \eqref{eqn:sm_reduced} in the limit, reduces the proof of \eqref{eqn:sm_reduced} to showing that 
\begin{equation}\label{eqn:sm_final}
t\, \E_{\qbs}(\frac{\tilde F_t^\beta}{\tilde D_t^\beta} L_t^\beta(x^*) \I_{E_t})\le \frac{2}{\pi} \E(F(R_1)^2)+o(1) \end{equation}
as $t\to \infty$, where  
\begin{equation}E_t:=\{\tilde{F}_t^\beta\le \tilde{D}_t^\beta \}\cap \{-X_{h_t}(x^*)+\gamma_c {h_t}+\beta \in [h_t^{1/3},h_t]\}.\end{equation} Since $-X_{s}(x^*)+\gamma_c s+\beta$ evolves as a 3d-Bessel process under $\qbs$, it is straightforward to show that $\qbs(E_t)\to 1$ as $t\to \infty$. Again this is shown in full detail in \cite{DRSV14two} with $F\equiv 1$, and the same argument works with bounded $F$. 

The final trick is to condition on $\cG_t^*$: the $\sigma$-algebra generated by $x^*$ and $(X_s(x);\, s\le h_t, x\in A)$. The point is that, by the decorrelation property of $(X_t;\, t\ge 0)$ (see after \eqref{eqn:starcut}) and the definition of ($\tilde{F}_t^\beta$, $\tilde{D}_t^\beta$), the event $E_t$ is measurable with respect to $\cG_t^*$, while $(\tilde{F}_t^\beta/\tilde{D}_t^\beta)$ and $L_t^\beta(x^*)$ are conditionally independent given it.
Thus, 
\begin{equation} t\, \E_{\qbs}(\frac{\tilde F_t^\beta}{\tilde D_t^\beta} L_t^\beta(x^*) \I_{E_t})= \E_{\qbs}(\I_{E_t} \sqrt{t} \E_{\qbs} ( L_t^\beta(x^*)\, | \, \mathcal{G}^*_t ) \sqrt{t} \E_{\qbs}( \frac{\tilde{F}_t^\beta}{\tilde D_t^\beta}\, | \, \cG^*_t )).\end{equation}
This is the ``factorisation'' stage mentioned in the proof outline. 

To conclude, the definition of $E_t$ and the first moment argument, again using \eqref{eqn:ltb}, gives that  \begin{equation}\I_{E_t}\E_{\qbs}(L_t^\beta(x^*)|\mathcal{G}_t^*)\le \sqrt{\frac{2}{\pi (t-h_t)}}\E(F(R_1)) \text{ almost surely}, \end{equation}
 while a slightly more in-depth argument gives that 
 \begin{equation}
 \sqrt{t}\E_{\Q^{\beta,*}}(\I_{E_t} \frac{\tilde{F}_t^\beta}{\tilde{D}_t^\beta})\le \sqrt{t} \E_{\Q^{\beta,*}}(\frac{F_t^\beta}{D_t^\beta(A)})+o(1) \end{equation}
(this is where the fact that $\tilde{F}_t^\beta \le \tilde D_t^\beta$ on $E_t$ is used, although it is hidden at this level of exposition). Putting these together, since $\sqrt{(t/t+h_t)}=1+o(1)$ as $t\to \infty$, provides \eqref{eqn:sm_final} and thus completes the proof.
  
\end{proof}

\noindent (IV) \emph{ Comparison arguments } \\

Recall that in \cite{JS17}, the authors prove that in the Seneta--Heyde normalisation, convergence for one approximation to the field implies convergence in other comparable approximations.
More precisely, they show the following.

\begin{theorem}[\cite{JS17}]\label{thm:JS}
Suppose that $(X_n,\tilde{X}_n;\, {n\ge 0})$ are two sequences of centred Gaussian fields on $D\subset\R^d$ compact, with covariance kernels $(K_n,\tilde{K}_n; \, n\ge 0)$ such that \begin{equation}(x,y)\mapsto \sqrt{\E(X_n(x)-X_n(y))^2} \text{ and } (x,y)\mapsto \sqrt{\E(\tilde{X}_n(x)-\tilde{X}_n(y))^2}
\end{equation} are $\alpha$-H\"{o}lder continuous for some $\alpha>0$. Suppose further that
\begin{equation}
\sup_{x,y\in D}|K_n(x,y)-\tilde{K}_n(x,y)|<\infty \; \forall n\ge 1 \text{ and } \lim_{n\to \infty} \sup_{|x-y|>\delta}|K_n(x,y)-\tilde{K}_n(x,y)|=0 \; \forall \delta>0.
\end{equation}
Finally, assume that $(\rho_n;\ n\ge 0)$ is a sequence of non-negative Radon reference measures, and that $\tilde{\mu}_n(dx):=\exp(\tilde{X}_n(x)-\frac{1}{2}\E(\tilde{X}_n(x)^2)) \rho_n(dx)$ converges in distribution to an a.s.\ non-atomic random measure $\tilde{\mu}$ on $D$ as $n\to \infty$. Then ${\mu}_n(dx)$ (defined analogously) converges in distribution to the same random measure $\tilde{\mu}$. 
\end{theorem}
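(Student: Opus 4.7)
My plan is to use Kahane's Gaussian interpolation between the two covariance structures. Couple $X_n$ and $\tilde X_n$ as independent centred Gaussian fields on a common probability space, and for $t\in[0,1]$ set $X_n^t := \sqrt{t}\,X_n + \sqrt{1-t}\,\tilde{X}_n$; this is a centred Gaussian field with covariance $C_n^t := tK_n + (1-t)\tilde K_n$, still $\alpha$-H\"{o}lder in the canonical metric. Define the associated measure $M_n^t(dx) := \exp(X_n^t(x) - \tfrac{1}{2}\E X_n^t(x)^2)\rho_n(dx)$, so $M_n^0 \eqD \tilde\mu_n$ and $M_n^1 \eqD \mu_n$. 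To establish convergence in distribution of $\mu_n$ to $\tilde\mu$, it is enough to show that for every $\Phi\in C^\infty(\R)$ with bounded derivatives of all orders and every continuous non-negative compactly supported $f:D\to\R$,
$$\bigl|\E\Phi(\mu_n(f)) - \E\Phi(\tilde\mu_n(f))\bigr| \longrightarrow 0 \qquad \text{as } n\to\infty.$$

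The heart of the argument is the Gaussian integration-by-parts identity for the interpolation; after the $-\tfrac12 \E X_n^t(x)^2$ normalisation absorbs the diagonal $H_n(x,x)$ contribution, one obtains
$$\E\Phi(\mu_n(f)) - \E\Phi(\tilde\mu_n(f)) = \tfrac{1}{2}\int_0^1 \E\Bigl[\Phi''(M_n^t(f))\iint_{D\times D} f(x)f(y) H_n(x,y)\, M_n^t(dx)M_n^t(dy)\Bigr]\,dt,$$
where $H_n := K_n - \tilde K_n$. I split the inner double integral into the far-diagonal region $\{|x-y|>\delta\}$ and the near-diagonal region $\{|x-y|\le\delta\}$. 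The far-diagonal piece is bounded by $\|\Phi''\|_\infty \sup_{|x-y|>\delta}|H_n(x,y)|\cdot \E[M_n^t(f)^2]$; the supremum tends to zero as $n\to\infty$ by hypothesis, and the second moment is controlled uniformly in $(n,t)$ by Kahane's inequality (\cref{thm:kahane}) using the uniform bound $\sup_n\|H_n\|_\infty<\infty$, after possibly truncating $\Phi$ and using the tightness of $\tilde\mu_n(f)$ implied by its convergence in distribution. For the near-diagonal piece, the uniform bound $\|H_n\|_\infty\le C$ reduces the question to showing
$$\limsup_{n\to\infty}\E\iint_{|x-y|\le\delta} f(x)f(y)\,M_n^t(dx)M_n^t(dy) \xrightarrow{\delta\downarrow 0} 0,$$
uniformly in $t\in[0,1]$. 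Using the explicit formula $\E M_n^t(dx)M_n^t(dy) = e^{C_n^t(x,y)}\rho_n(dx)\rho_n(dy)$ together with $C_n^t = \tilde K_n + tH_n$ shows that the latter is bounded by $e^C$ times the analogous expression for $\tilde\mu_n\otimes\tilde\mu_n$, so one reduces to the $t=0$ statement. This in turn follows from the convergence in distribution $\tilde\mu_n\to\tilde\mu$ (promoted to the product measures) together with the a.s.\ non-atomicity of $\tilde\mu$, which forces $\tilde\mu\otimes\tilde\mu$ to put no mass on the diagonal.

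The main obstacle, as I see it, is precisely this last transfer step: propagating diagonal-smallness uniformly across $t\in[0,1]$ and $n$. Uniformity in $t$ is handled cleanly via the Kahane-type covariance comparison $C_n^t - \tilde K_n = tH_n$, but converting the weak convergence $\tilde\mu_n\to\tilde\mu$ into convergence of expectations of the discontinuous functional $\mu\mapsto \iint_{|x-y|\le\delta} f(x)f(y)\,\mu(dx)\mu(dy)$ is delicate: it requires smoothing the indicator $\I_{|x-y|\le\delta}$ by a continuous envelope and then extracting uniform integrability of $\tilde\mu_n\otimes\tilde\mu_n$ on the support of $f$. Once this is established, the proof is concluded by choosing $\delta$ small first to control the near-diagonal contribution and then letting $n\to\infty$ so that the off-diagonal contribution vanishes.
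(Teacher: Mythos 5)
Your proposal takes a genuinely different route from the paper. The paper first uses Kahane's inequality to transfer tightness from $(\tilde\mu_n)$ to $(\mu_n)$, then passes to a subsequential limit $\mu$ and compares the \emph{limit} measures: for any nonnegative continuous $f$ and bounded concave $\varphi$, it shows $\E\varphi(\mu(f))=\E\varphi(\tilde\mu(f))$ by sandwiching via Kahane with an auxiliary, nearly decorrelated smooth field $Y_\eps$ (their Lemma~3.5), exploiting that the limit measure $\mu$ is a.s.\ non-atomic so $\mu\otimes\mu$ does not charge the diagonal. Your proposal instead differentiates along the Gaussian interpolation $X_n^t=\sqrt{t}X_n+\sqrt{1-t}\tilde X_n$ and tries to control the resulting $\Phi''$ term directly at the level of the \emph{prelimit} measures $M_n^t$. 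The structural idea is legitimate (it is essentially the proof of Kahane's inequality unwrapped), but it forces you to carry uniformity in $n$ and $t$, which the paper's route avoids by passing to subsequential limits before doing the comparison.

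There is a concrete gap in your quantitative estimates. In the intended application (Seneta--Heyde normalisation, $X_n=\gamma_c X_{\eps_n}$, $\rho_n=\sqrt{\log(1/\eps_n)}\,dx$), one has $\E[X_n(x)X_n(y)]\approx -2d\log(|x-y|\vee\eps_n)$, so $e^{C_n^t(x,y)}\rho_n(dx)\rho_n(dy)$ is a measure whose total mass near the diagonal diverges like $\eps_n^{-d}\log(1/\eps_n)$. Consequently $\E[M_n^t(f)^2]\to\infty$, so the far-diagonal bound $\|\Phi''\|_\infty\cdot\sup_{|x-y|>\delta}|H_n|\cdot\E[M_n^t(f)^2]$ is a $0\cdot\infty$ statement without a rate, and the ``explicit formula'' you invoke for the near-diagonal piece literally equals $+\infty$ for large $n$; being bounded by $e^C$ times another $+\infty$ is vacuous. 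The instinct to truncate $\Phi$ is the right fix: if $\Phi''$ is supported in $[0,K]$ then on $\{M_n^t(f)\le K\}$ the whole double integral is $\le K^2$ and no infinite moment ever appears. But this changes the game --- you are no longer bounding a second moment, you are proving an anti-concentration statement of the form ``with high probability, the $M_n^t$-mass of any $\delta$-ball in the support of $f$ is small, uniformly in $(n,t)$.'' That is exactly the content of Junnila--Saksman's Lemma~3.5 and non-atomicity lemma, and it is the hard step; your write-up gestures at it (``smoothing the indicator'', ``uniform integrability of $\tilde\mu_n\otimes\tilde\mu_n$'') but uniform integrability is the wrong framing precisely because the moments blow up. The paper sidesteps this by establishing the diagonal estimate for the fixed non-atomic limit measure $\mu$ rather than uniformly along the sequence, which is strictly easier and is where the real saving comes from. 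If you restructure your argument to first extract a subsequential limit and then run the interpolation/comparison at the limit level, it should close.
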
 

There are also conditions provided \cite[Theorem 4.4]{JS17} for the analogous result to hold with convergence in distribution replaced by convergence in probability.\\

Note the freedom in allowing the reference measures $\rho_n$ to depend on $n$ here. This is what makes the theorem applicable to approximations of Gaussian multiplicative chaos in the Seneta--Heyde normalisation. For  example, one can let $X_n,\tilde{X}_n$ be $\gamma_c$ times some convolution approximations to a log-correlated field at level $\eps_n$, and set $\rho_n(dx):=\sqrt{\log(1/\eps_n)} \, dx$. 

See \cite[Section 5]{JS17} for proofs that the conditions of this theorem are satisfied when comparing:

\begin{itemize}
	\item different convolution approximations of general log-correlated fields (Corollary 5.2); 
	\item convolution approximations vs. $\star$-scale cut-off approximations of $\star$-scale invariant fields (Lemma 5.6);
	\item convolution approximations vs. circle averages of the planar Gaussian free field (Lemma 5.7).
\end{itemize} 

\begin{proof}[Sketch proof (of \cref{thm:JS})]
	The first step is a fairly classical argument (using Kahane's inequalities) to show that tightness of the sequence of measures $(\tilde{\mu}_n;\, n\ge 1)$ implies tightness of $(\mu_n; \, n\ge 1)$. Thus, $(\mu_n; \, n\ge 1)$ has subsequential limits in the space of measures, and it is only necessary to prove that any such limit $\mu$ must be equal to $\tilde{\mu}$. For this, it suffices to prove that for any non-negative continuous function $f$ on $D$ and any non-negative, bounded, continuous concave function $\varphi$, that 
	\begin{equation}\label{eq:JSproof1}
	\E(\varphi(\int f(x) \mu(dx)))=\E(\varphi(\int f(x) \tilde{\mu}(dx)).
	\end{equation} 
	The idea is to use Kahane's concavity inequality (just take $F=-F$ in \cref{thm:kahane}), to verify that the above equation is satisfied with both $\le$ and $\ge$ in place of $=$.
	
	To show the $\le$ version (the other case following from the symmetric argument) suppose for ease of notation that $\mu_n\to \mu$ as $n\to \infty$. 
	The idea is to define an auxiliary field $Y$, continuous and independent of $(X_n; \, n\ge 1)$, so that $X_n+Y$ has covariance dominating the covariance of $\tilde{X}_n$ pointwise for all large enough $n$, but also so that $Y$ is very close to being totally decorrelated. The first of these properties means that \begin{equation}\label{eq:JSproof2}\E(\varphi(\int f(x) \e^{Y(x)-\frac{1}{2}\E(Y(x)^2)} \mu(dx)))\le \E(\varphi(\int f(x) \tilde{\mu}(dx)))\end{equation} by Kahane, and the second means that the left-hand side of \eqref{eq:JSproof2} very close to the left-hand side \eqref{eq:JSproof1}. 
	
	More precisely, Lemma 3.5 in \cite{JS17} shows that one can construct a sequence of fields $(Y_\eps;\, \eps>0)$ so that \eqref{eq:JSproof2} holds with $Y\leftrightarrow Y_\eps$ for every $\eps$, and so that \begin{equation}\label{eqn:Yeps}\E(|\int_D \e^{Y_\eps(x)-\frac{1}{2}\E(Y_\eps(x)^2)} \lambda(dx)-\lambda(D)|^2)\lesssim \eps^2 \lambda(D)^2+\iint_{|x-y|<2\eps} \lambda (dx) \lambda (dy)\end{equation} for \emph{any} positive measure $\lambda$ on $D$. Note that the right-hand side is small as long as $\lambda$ isn't atomic. In particular since $\mu$ must be a.s.\ non-atomic (this is Lemma 3.3 in \cite{JS17}, which is again proved using Kahane's inequality), \eqref{eqn:Yeps} can be applied with $\lambda=\mu$
to deduce that \begin{equation}\label{eq:JSproof3} \int f(x) \e^{Y_\eps(x)-\frac{1}{2}\E(Y_\eps(x)^2)} \mu(dx)\to \int f(x) \mu (dx)\end{equation} almost surely as $\eps \to 0$ (this last step uses a Borel--Cantelli argument). Combining with \eqref{eq:JSproof2} yields the conclusion. 
	\end{proof}

The next comparison argument to appear after this was in \cite{Pow18chaos}, whose main theorem is:

\begin{theorem}[\cite{Pow18chaos}]
 For $\star$-scale invariant fields and the planar Gaussian free field, convergence for convolution approximations in the Seneta--Heyde normalisation implies convergence in the derivative normalisation. 
\end{theorem}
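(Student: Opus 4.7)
The plan is to rewrite $\mu_\eps'(A)$ as a layer-cake integral of Seneta--Heyde masses with indicator weights, then pass to the limit via a weighted version of \cref{lem:sh}. Write $v_\eps(x):=\E(X_\eps(x)^2)$ and $Y_\eps(x):=-X_\eps(x)+\gamma_c v_\eps(x)$, so that $\mu_\eps'(A)=\int_A Y_\eps(x)\,e^{\gamma_c X_\eps(x)-(\gamma_c^2/2)v_\eps(x)}\,dx$. Fixing $T>0$ and replacing $Y_\eps(x)$ by $Y_\eps(x)_+\wedge T\sqrt{\log(1/\eps)}$, a layer-cake decomposition yields
\[
\sqrt{\log(1/\eps)}\int_0^T\Big(\int_A \I_{\{Y_\eps(x)>s\sqrt{\log(1/\eps)}\}}\,e^{\gamma_c X_\eps(x)-(\gamma_c^2/2)v_\eps(x)}\,dx\Big)\,ds.
\]

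The central step is a \emph{weighted} Seneta--Heyde statement: for any bounded continuous $F$,
\[
\sqrt{\log(1/\eps)}\int_A F\big(Y_\eps(x)/\sqrt{\log(1/\eps)}\big)\,e^{\gamma_c X_\eps(x)-(\gamma_c^2/2)v_\eps(x)}\,dx \;\xrightarrow[\eps\to0]{\P}\; \sqrt{2/\pi}\,\E[F(R_1)]\,\mu'(A),
\]
with $R_1$ a Brownian meander at time $1$. To establish it, I would follow the rooted-measure strategy of \cref{sec:spine}: biasing by $\mu_\eps^{\gamma_c}(A)$ and appending a uniform root $x^*\in A$, Girsanov's theorem shows that the scale trajectory of $X_\eps$ at $x^*$ is, up to mollifier-dependent corrections of bounded covariance, a Brownian motion with drift $\gamma_c$, conditioned (via the bias towards thick points) to stay above a barrier. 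This produces the Brownian meander law asymptotically, and the hypothesised unweighted Seneta--Heyde convergence pins down the prefactor $\sqrt{2/\pi}\mu'(A)$. A monotone approximation then extends the statement from continuous $F$ to the indicator $\I_{\cdot>s}$ needed above.

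Two remainder terms must be controlled. First, the contribution of $\mu_\eps'(A)$ from $\{Y_\eps(x)<0\}$ is bounded in absolute value by $C\mu_\eps^{\gamma_c}(A)$, which tends to $0$ by the hypothesis once the extreme value bound \eqref{eqn:extrema_basic} is transferred from the $\star$-scale setting to convolution approximations via Kahane's inequality. Second, the contribution from $\{Y_\eps(x)>T\sqrt{\log(1/\eps)}\}$ is asymptotically bounded by $\sqrt{2/\pi}\,\E[R_1\I_{R_1>T}]\mu'(A)$, which vanishes as $T\to\infty$ by integrability of the meander. Combined with $\int_0^\infty \P(R_1>s)\,ds=\E[R_1]=\sqrt{\pi/2}$, the layer-cake integral converges to $\sqrt{2/\pi}\cdot\sqrt{\pi/2}\,\mu'(A)=\mu'(A)$, establishing $\mu_\eps'(A)\to\mu'(A)$ in probability for every compact $A$.

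The main obstacle is the weighted Seneta--Heyde convergence itself. In the $\star$-scale cutoff setting (\cref{lem:sh}) the exact Brownian structure of $(X_t(x))_{t\ge 0}$ and the exact martingale property of the tilted measures make this essentially automatic, but for convolution approximations these hold only up to mollifier-dependent corrections. The cleanest remedy appears to be to adapt the Junnila--Saksman comparison (\cref{thm:JS}) to $F$-weighted Seneta--Heyde masses, comparing the convolution with the $\star$-scale cutoff of the same field plus an independent continuous auxiliary field of small additional variance; the extra $F$-weight complicates the Kahane-type estimates but preserves their convexity structure provided $F$ is monotone, which can always be arranged by splitting $F$ into increasing and decreasing parts.
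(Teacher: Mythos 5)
Your layer-cake reduction of the derivative mass to an integral of $F$-weighted Seneta--Heyde masses, together with the bookkeeping $\E[R_1]=\sqrt{\pi/2}$ cancelling the $\sqrt{2/\pi}$ prefactor, is a clean and genuinely different organisation from the one in \cite{Pow18chaos}, which (like the sketch of \cref{lem:sh} in this survey) proceeds by first-- and second--moment estimates on the ratio $\mu_\eps'(A)/(\sqrt{\log(1/\eps)}\,\mu^{\gamma_c}_\eps(A))$ under the rooted measure, using a truncated/barrier version $D^\beta_\eps$ of the derivative mass rather than a layer-cake. The remainder-term analysis (negative $Y_\eps$ controlled by the hypothesised vanishing of $\mu_\eps^{\gamma_c}$ together with \eqref{eqn:extrema_basic}, large $Y_\eps$ controlled by meander integrability) and the monotone approximation of indicators are reasonable.

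However, there is a genuine gap in the step you yourself flag as the ``main obstacle,'' and the remedy you propose does not close it. You suggest establishing the weighted Seneta--Heyde convergence for convolution approximations by adapting the Junnila--Saksman comparison \cref{thm:JS}. But that theorem, and the underlying Kahane inequality \cref{thm:kahane}, compares GMC masses of the form $\int_D \exp(Z_i(x)-\tfrac12\E Z_i(x)^2)\,\rho(dx)$ where $\rho$ is a \emph{fixed} (possibly $n$-dependent, but field-independent) reference measure. The quantity you need to compare has an extra factor $F\bigl(Y_\eps(x)/\sqrt{\log(1/\eps)}\bigr)$ which is a nonlinear function of the very field being compared; this is not a reference measure, and Kahane's inequality provides no comparison for such functionals. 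Requiring $F$ to be monotone (or splitting it into monotone pieces) does not restore the structure: convexity in Kahane's inequality is convexity of the outer functional of the total mass, not of the integrand as a function of the field value, and a field-dependent weight breaks the covariance-monotonicity mechanism entirely. Concretely, if $K_1\le K_2$ pointwise, there is no general inequality between $\E\,\varphi\bigl(\int F(Z_1)\e^{Z_1-\cdots}\bigr)$ and $\E\,\varphi\bigl(\int F(Z_2)\e^{Z_2-\cdots}\bigr)$.

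To close the gap you would instead have to prove the weighted Seneta--Heyde convergence for convolution approximations directly, by the spine/rooted-measure first-- and second--moment argument of \cref{lem:sh}, tracking explicitly the mollifier-dependent deviations of $(X_\eps(x))_\eps$ from an exact Brownian trajectory (the process has approximately, but not exactly, independent stationary increments in $\log(1/\eps)$, and the decorrelation across space is only approximate). This is precisely where the ``quite different technicalities'' alluded to in the survey live, and it is what \cite{Pow18chaos} carries out; once that is done, your layer-cake argument would indeed deliver the conclusion. As an alternative to the layer-cake step, one can also bypass the weighted Seneta--Heyde entirely and show directly that the ratio of the (truncated) derivative mass to the Seneta--Heyde mass tends to $\sqrt{\pi/2}$ under the rooted measure, which is closer to the route actually taken.
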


The proof of this theorem goes along similar lines to the proof of \cref{lem:sh} above, though dealing with quite different technicalities (that will not be discussed here). 

Finally, the theorem that pulls everything together to reach the conclusion of \cref{thm:const_gen} is the following, much more general result from \cite{JSW19}. 

\begin{theorem}[\cite{JSW19}]
	\label{thm:decomp}
	Let $X_1,X_2$ be two centred Gaussian fields, almost surely lying in $H^{\alpha}_{\text{loc}}(D)$ for some $\alpha>0$ and some domain $D\subset \R^d$. Let $C_1,C_2$ be the covariance kernels of $X_1, X_2$, and assume that $C_1,C_2\in L_{\text{loc}}^1(D\times D)$ while for some $\eps>0$, $C_1-C_2\in H_{\text{loc}}^{d+\eps}(D\times D)$. Then for any subdomain $D'$ compactly contained in $D$, it is possible to construct $(X_1',X_2',G)$ on a common probability space, such that $G$ is a.s.\ H\"{o}lder continuous on $D'$, $X_1',X_2'$ have the same marginal laws as $X_1,X_2$, and
	\begin{equation}
	X_1'=X_2'+G \text{ almost surely on } D'.
	\end{equation}
	
\end{theorem}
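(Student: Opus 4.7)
Take a compactly contained subdomain $D' \Subset D$. The strategy is to build $X_1'$ and $X_2'$ jointly as sums of independent pieces using a spectral decomposition of $C_1-C_2$, so that the difference $G := X_1' - X_2'$ is automatically a Hölder continuous Gaussian field. The regularity of $G$ will come from a Sobolev embedding, while existence of the coupling will come from a spectral splitting $C_1-C_2 = C_+ - C_-$ into PSD parts.

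\textbf{Step 1 (regularity).} The product $D\times D$ has dimension $2d$, and $d+\eps > (2d)/2$, so Sobolev embedding yields $H^{d+\eps}_{\loc}(D\times D) \hookrightarrow C^\alpha_{\loc}(D\times D)$ for some $\alpha>0$. In particular, $(C_1-C_2)|_{\ol{D'}\times\ol{D'}}$ is Hölder continuous. For any centred Gaussian field $Z$ on $D'$ with Hölder continuous covariance $K$, one has $\E[(Z(x)-Z(y))^2] = K(x,x)+K(y,y)-2K(x,y)$ itself Hölder in $(x,y)$; combined with Gaussian hypercontractivity (higher moments of $Z(x)-Z(y)$ controlled by its variance), Kolmogorov's criterion supplies an a.s.\ Hölder continuous modification.

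\textbf{Step 2 (spectral decomposition).} The kernel $C_1-C_2$, being symmetric and Hölder on the compact $\ol{D'}\times\ol{D'}$, is Hilbert--Schmidt and thus defines a compact self-adjoint operator on $L^2(D')$, with a complete orthonormal eigenbasis $\{e_n\}$ and real eigenvalues $\lambda_n \to 0$. Set
\begin{equation*}
C_+(x,y) := \sum_{\lambda_n>0} \lambda_n\, e_n(x) e_n(y), \qquad C_-(x,y) := \sum_{\lambda_n<0} |\lambda_n|\, e_n(x) e_n(y),
\end{equation*}
so that $C_1-C_2 = C_+ - C_-$ with both $C_\pm$ PSD. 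A Weyl/elliptic-regularity argument, propagating the Hölder regularity of $C_1-C_2$ through the spectral projections, implies each $C_\pm$ is itself Hölder continuous on $\ol{D'}\times\ol{D'}$.

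\textbf{Step 3 (coupling) and the main obstacle.} On a common probability space, take three mutually independent centred Gaussian fields $\tilde X, G_+, G_-$ with respective covariances $C_2 - C_-$, $C_+$, $C_-$; the fields $G_\pm$ admit a.s.\ Hölder modifications by Step 1. Set
\begin{equation*}
X_2' := \tilde X + G_-, \qquad X_1' := \tilde X + G_+,
\end{equation*}
so that $\cov(X_2') = C_2$, $\cov(X_1') = C_2 - C_- + C_+ = C_1$, and $G := X_1' - X_2' = G_+ - G_-$ is a.s.\ Hölder. The principal technical hurdle is positivity of $C_2 - C_-$: although $C_-$ has finite operator norm on $L^2(D')$, there is no automatic reason for the log-correlated kernel $C_2$ to dominate it as an operator. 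The resolution I anticipate is either to realize $G_-$ as a linear functional of an auxiliary reference Gaussian field sharing the log-singularity of $C_2$ (so only a bounded Hölder correction needs to be PSD), or to bootstrap by enlarging both covariances by a sufficiently large smooth helper kernel $K$, running the construction for $C_i + K$, and then recovering $X_i$ via a conditional Gaussian projection that undoes the $K$-shift. Securing the PSD condition at the level of the joint covariance is the technical heart of the argument.
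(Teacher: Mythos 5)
Your Step 1 is fine (the Sobolev embedding $H^{d+\eps}_{\loc}(D\times D)\hookrightarrow C^\eps_{\loc}$ on the $2d$-dimensional product is exactly the right observation, and Kolmogorov's criterion then gives Hölder modifications for Gaussian fields with Hölder covariance). The problems are in Steps 2 and 3.

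\textbf{Step 2 is the serious gap.} You decompose $C_1-C_2$ into its positive and negative spectral parts $C_\pm$ on $L^2(D')$ and then assert, via ``a Weyl/elliptic-regularity argument,'' that $C_\pm$ are themselves Hölder. But the operation $C\mapsto C_\pm$ is a hard spectral cut-off: it is not a smoothing operation, and the eigenfunctions of the compact self-adjoint operator with kernel $C_1-C_2$ have no a priori relation to the Laplacian (or to any elliptic operator), so there is nothing that lets you propagate the $H^{d+\eps}$ regularity of the kernel through the spectral projection onto $\{\lambda_n>0\}$. In fact nothing in the hypotheses controls the $L^\infty$ or Hölder norms of the $e_n$'s, and the series $\sum_{\lambda_n>0}\lambda_n e_n(x)e_n(y)$ has no reason to converge to a continuous, let alone Hölder, function. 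This is precisely what \cite{JSW19} work hard to avoid. Their proof does \emph{not} take abstract positive/negative parts: instead they localise to a torus and work directly with the Fourier coefficient matrix $(c_{mn})$ of $C_1-C_2$, where the $H^{d+\eps}$ hypothesis gives explicit off-diagonal decay. They then build a Hölder PSD kernel $C_-$ by hand (essentially a diagonal kernel $\sum_n \lambda_n e^{2\pi i n\cdot(x-y)}$, with $\lambda_n$ chosen large enough to make $(c_{mn})+\operatorname{diag}(\lambda_n)$ diagonally dominant, and small enough that $\sum_n\lambda_n(1+|n|)^{2\alpha}<\infty$). The regularity of both pieces is therefore \emph{built in by construction}, which is the whole point of the Fourier approach. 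Your abstract decomposition cannot deliver this.

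\textbf{Step 3 has a second, smaller, gap that you correctly diagnose but do not close.} There is no reason $C_2-C_-$ should be PSD, so you cannot take $\tilde X$ with that covariance. The clean fix is not the ``enlarge-and-project'' bootstrap you gesture at, but a conditional coupling that sidesteps positivity entirely: note $C_1+C_- = C_2+C_+ =: K$ is PSD as a sum of PSD kernels. Sample $W$ with covariance $K$, and then conditionally on $W$ sample $Z_-$ and $Z_+$ (conditionally independent given $W$) so that the joint law of $(W,Z_-)$ is that of $(X_1'+Z_-,\,Z_-)$ with $X_1'\indep Z_-$, $\cov(X_1')=C_1$, $\cov(Z_-)=C_-$, and similarly for $(W,Z_+)$. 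Both pairwise block covariances are automatically PSD because they arise from genuine independent Gaussian sums. Setting $X_1':=W-Z_-$, $X_2':=W-Z_+$, one gets the right marginals and $X_1'-X_2'=Z_+-Z_-$, which is a difference of Hölder Gaussian fields. No positivity of $C_2-C_-$ is ever needed. This part of your plan is repairable; Step 2, as you have written it, is not.
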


The proof of this theorem is beyond the scope of this survey. But, as mentioned previously, the result is the ability to decompose any log-correlated field $X$ as in \eqref{eqn:kx}, as a sum of a $\star$-scale invariant field and an independent H\"{o}lder continuous Gaussian field. Since \cref{thm:const_gen} is known for $\star$-scale invariant fields, this finally (with a little work) implies the general result. 

Unsurprisingly this is just one application of the decomposition theorem: see \cite{JSW19} for a much broader discussion.

\subsection{Properties of critical measures}
 
This section provides a brief survey, without proofs, of some important properties of critical Gaussian multiplicative chaos.
Unless stated otherwise, $\mu'$ will be the critical chaos measure for a general log-correlated field as in \eqref{eqn:kx}.

\begin{theorem}[Moments, \cite{DRSV14two}]
For any $A$ non-empty, bounded and open, $\mu'(A)$ has finite moments of every order $q\in(-\infty,1)$. It does not have moments of order $1$.
Moreover, for any $q<1$ there exists a constant $C_q$ (that may depend on $K_X$) such that for any $A\subset D$ open and bounded
\begin{equation}\label{eqn:mf}
\E[\mu'(rA)^q]\overset{r\to 0}{\asymp} C_q r^{2dq-dq^2}.
\end{equation}
\end{theorem}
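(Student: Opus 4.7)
The scaling exponent $\xi(q) := 2dq - dq^2$ equals $\lim_{\gamma \uparrow \gamma_c} \xi_\gamma(q)$ where $\xi_\gamma(q) := (d + \gamma^2/2)q - (\gamma^2/2) q^2$ is the standard subcritical multifractal exponent, so the plan is to reduce to the $\star$-scale invariant case via \cref{thm:decomp} and then exploit the scaling identity inherited by $\mu'$. Writing $X = Y + G$ on a neighbourhood of $A$ with $Y$ $\star$-scale invariant and $G$ an independent, a.s.\ H\"{o}lder Gaussian field, one has $\mu'(dx) = e^{\gamma_c G(x) - (\gamma_c^2/2)\E(G(x)^2)}\,\mu'_Y(dx)$. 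Since $G$ is bounded on $A$ with all exponential moments, this reduces the theorem to the case where $X$ itself is $\star$-scale invariant, to which I restrict henceforth.

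For the moments, the divergence $\E[\mu'(A)] = \infty$ is immediate from the spine decomposition of \cref{sec:spine}: $\E_\P[D_t^\beta(A)] = \beta|A|$ for every $t$ and every $\beta > 0$ by the martingale property together with $\E_\P[d_t^\beta(x)] = \beta$, so $\E[D^\beta(A)] = \beta|A|$, and monotone convergence as $\beta \uparrow \infty$ (using $D^\beta(A) \uparrow \mu'(A)$) yields the claim. For $q \in (0,1)$, size-biasing gives $\E[(D^\beta(A))^q] = \beta|A| \cdot \E_{\Q^{\beta,*}}[(D^\beta(A))^{q-1}]$; under $\Q^{\beta,*}$ the spine $(-X_t(x^*) + \gamma_c t + \beta)$ is a $3d$-Bessel started from $\beta$, and a direct adaptation of the uniform-integrability argument of step (I) above gives $\sup_\beta \E[(D^\beta(A))^q] < \infty$, whence $\E[\mu'(A)^q] < \infty$ by monotone convergence. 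Negative moments follow from $\mu'(A) \ge D^\beta(A)$ combined with lower-tail estimates on $D^\beta(A)$, themselves obtained from the same $3d$-Bessel spine analysis transported back from $\Q^{\beta,*}$ to $\P$.

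For the sharp scaling \eqref{eqn:mf}, $\star$-scale invariance produces a distributional identity of the form
\begin{equation}
\mu'(rA) \;\eqD\; r^d \int_A e^{\gamma_c X_s(rx) - (\gamma_c^2/2)s}\, \bigl[\,\tilde\mu'(dx) + (\gamma_c s - X_s(rx))\, \tilde\mu^{\gamma_c}(dx)\,\bigr]
\end{equation}
with $s := \log(1/r)$, obtained by applying the $\star$-equation \eqref{eqn:star} to the derivative construction of \cref{ex:BMDM} at level $t = s$ (using $\mu^{\gamma_c, s}(B) \eqD \mu^{\gamma_c}(e^s B)$); here $(X_s)$ is independent of the tilded copies. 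Since $X_s(rx)$ has variance $s$ while $X_s(rx_1) - X_s(rx_2)$ has variance $O(1)$ uniformly for $x_1, x_2 \in A$, a direct computation using $\E[e^{q(\gamma_c X_s - \gamma_c^2 s/2)}] = e^{\gamma_c^2 q(q-1) s/2} = r^{-dq(q-1)}$ (with $\gamma_c^2 = 2d$) gives $\E[\mu'(rA)^q] \asymp r^{dq + dq(1-q)} \E[\mu'(A)^q] = r^{2dq - dq^2}\, C_q$ for the leading $\tilde\mu'$-term; the matching lower bound is by the same computation applied in reverse.

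The main technical obstacle is the derivative correction $(\gamma_c s - X_s(rx))$ multiplying $\tilde\mu^{\gamma_c}$ in the scaling identity: it has typical size $O(s) = O(\log(1/r))$ and must not perturb the power-law exponent for the $\asymp$ statement to hold. This is controlled by the Seneta--Heyde convergence $\sqrt{s}\,\mu^{\gamma_c}_s \to \sqrt{2/\pi}\,\mu'$, which ties the correction term back to $\mu'$ modulo a $\sqrt{s}$ factor, combined with the rooted-measure machinery of \cref{sec:spine}: conditionally on the spine $x^*$, the prefactor is governed by the $3d$-Bessel trajectory $(-X_s(rx^*) + \gamma_c s + \beta)$, of typical size $\sqrt{s}$, so its $q$-th moment contribution is only polylogarithmic in $1/r$ and hence absorbs into the multiplicative constant $C_q$ rather than modifying the exponent.
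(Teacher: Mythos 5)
Your overall strategy is sound and largely parallels the paper's implicit route (prove the statement for $\star$-scale invariant fields, where the spine decomposition and $\star$-equation are available, then transfer to general log-correlated fields), but the transfer step differs: you invoke the decomposition theorem (\cref{thm:decomp}) to write $\mu'_X = e^{\gamma_c G - (\gamma_c^2/2)\E G^2}\mu'_Y$, whereas the paper simply states that ``the general result follows by Kahane's inequality'' (\cref{thm:kahane}). Both are legitimate. The Kahane route only needs a two-sided pointwise bound $|K_X - K_Y| \le C$ and gives the two-sided $\asymp$ by applying the inequality with $F$ and $-F$; your route uses heavier machinery but yields an exact almost-sure relation between the two measures, which makes the reduction conceptually cleaner and transparently preserves the exponent.

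Two parts of your sketch, however, have genuine problems. First, your scaling identity carries an extra term $(\gamma_c s - X_s(rx))\,\tilde\mu^{\gamma_c}(dx)$, and your final paragraph treats the ``derivative correction'' multiplying $\tilde\mu^{\gamma_c}$ as the main technical obstacle, proposing to control it via the Seneta--Heyde normalisation. But if $\tilde\mu^{\gamma_c}$ denotes (as your notation suggests) the critical-exponent chaos without renormalisation, it is identically zero (\cref{lem:conv_to_0}): taking $t\to\infty$ in the split $\gamma_c t - X_t = (\gamma_c s - X_s) + (\gamma_c(t-s) - (X_t - X_s))$, the first piece multiplies $\lim_t e^{\gamma_c(X_t-X_s) - \gamma_c^2(t-s)/2}\,dx$, which vanishes as a measure, leaving only the $\tilde\mu'$ term. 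Your exponent calculation then correctly gives $r^{dq}\cdot r^{dq-dq^2} = r^{2dq-dq^2}$ from the $\tilde\mu'$ contribution, but the last paragraph is addressing a phantom obstacle. If instead you intend a pre-limit version with $\mu^{\gamma_c}_T$, the identity as displayed (with a limiting $\tilde\mu^{\gamma_c}$) is not correct as written.

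Second, the negative-moment claim is not adequately supported. You write that negative moments ``follow from $\mu'(A) \ge D^\beta(A)$ combined with lower-tail estimates on $D^\beta(A)$, themselves obtained from the same $3d$-Bessel spine analysis transported back from $\Q^{\beta,*}$ to $\P$.'' The rooted measure $\Q^{\beta,*}$ size-biases by $D_t^\beta(A)$ and is therefore suited to controlling \emph{upper} tails and moments of order in $(0,1)$; it does not straightforwardly yield decay of $\P(D^\beta(A) < \epsilon)$ as $\epsilon \to 0$. Negative moments for (critical) multiplicative chaos are typically established by a different multiscale argument --- showing sub-exponential decay of $\E[e^{-\lambda\mu'(A)}]$ by decomposing $A$ into many subcubes at a mesoscopic scale, using approximate independence across well-separated cubes and the fact that each carries mass with positive probability --- and this is the argument underlying Corollary~6 of the cited reference. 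The spine transport alone does not produce such an estimate.
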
 

For $\star$-scale invariant fields, the fact that $\mu'(A)$ cannot have a finite moment of order $1$ was already observed in the previous section (see the discussion about convergence in the derivative normalisation). The existence of moments with order $<1$ and the multifractal spectrum statement are given in \cite[Corollaries 6 and 7]{DRSV14two}. The general result follows by Kahane's inequality: \cref{thm:kahane}. Note that the exponent $2dq-dq^2$ in \eqref{eqn:mf} is the limit of the corresponding subcritical exponent, which is $(d+\gamma^2/2)q-\gamma^2q^2/2$ for $\gamma<\sqrt{2d}$, \cite{RV10}. See also \cite{DRSV14two} for a proof of the \emph{KPZ-relation} satisfied by critical chaos (at least for $\star$-scale invariant fields or the planar Gaussian free field).

\begin{theorem}[Tail behaviour, \cite{MDW19}]
For any open set $A\subset D$ such that $\mathrm{Leb}(\partial A)=0$, and any non-negative continuous function $g$ on $A$
\begin{equation}
\P(\int g(x) \mu'(dx) > t) = \frac{\int_A g(v) \, dv}{\sqrt{\pi d} \, t}+o(t^{-1}) \text{ as } t\to \infty
\end{equation}
\end{theorem}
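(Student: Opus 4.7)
The plan is to reduce to the $\star$-scale invariant case via the decomposition in \cref{thm:decomp}, and there exploit the spine/Girsanov representation of \cref{sec:spine} together with Brownian meander asymptotics to extract both the $1/t$ decay and the exact constant $1/\sqrt{\pi d}$.

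\emph{Reduction.} By \cref{thm:decomp}, on any relatively compact subdomain of $D$ we may decompose $X = \wt X + G$ with $\wt X$ a $\star$-scale invariant field recovering the log singularity of $K_X$, and $G$ an independent H\"older continuous Gaussian field. Then $\mu'(dx) = \e^{\gamma_c G(x) - \gamma_c^2\E(G(x)^2)/2}\wt \mu'(dx)$. Conditioning on $G$ and applying the theorem to $\wt \mu'$ with weight $\tilde g(x) := g(x) \e^{\gamma_c G(x)-\gamma_c^2 \E(G(x)^2)/2}$, then taking expectation over $G$ and using $\E(\e^{\gamma_c G(x)-\gamma_c^2\E(G(x)^2)/2}) = 1$, recovers the stated leading constant $\int_A g(v)\,dv/\sqrt{\pi d}$. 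By the a.s.\ non-atomicity of $\mu'$ and the moment bounds \eqref{eqn:mf}, approximating $g$ uniformly from above and below by simple functions further reduces the problem to $g = \mathbf{1}_B$ with $B$ an open ball.

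\emph{Tail analysis via the spine.} Fix $\beta$ large. From the analysis underlying \cref{thm:const_gen}, $\mu'(B) = \lim_{t\to \infty} D_t^\beta(B) =: D^\beta(B)$ almost surely on the high-probability event where the barrier events $A_t^\beta(x)$ of \eqref{def:dtbatb} all hold, so it suffices to compute the tail of $D^\beta(B)$. Under $\qbs$ defined in \eqref{def:qbeta}, the spine $(-X_s(x^*)+\gamma_c s + \beta)_{s\ge 0}$ is a 3d-Bessel process started from $\beta$, and the reweighting $d\Q^\beta/d\P|_{\cF_t} \propto D_t^\beta(B)$ holds. For large $T$ the event $\{D^\beta(B) > T\}$ should be dominated by the existence of a single ``$\log T$-thick'' point $x^* \in B$: one at which $-X_s(x^*)+\gamma_c s \approx \log T$ at the appropriate scale $s = s(T)$. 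Computing the $\qbs$-probability of such a spine configuration via the transience of the 3d-Bessel process and the Brownian meander density at time $1$, then undoing the change of measure, yields an asymptotic density of thick points in $B$ equal to $|B|/(\sqrt{\pi d}\,T)$. The constant $1/\sqrt{\pi d}$ is precisely the meander normalisation responsible for the $\sqrt{2/\pi}$ appearing in \cref{lem:sh}.

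\emph{Main obstacles.} The central difficulty is \emph{single-thick-point domination}: proving that the contribution to $\{\mu'(B) > T\}$ from configurations with two or more moderately thick points is only $o(T^{-1})$. This requires sharp multi-point moment bounds on $\mu'$ (refining \eqref{eqn:mf}) together with the $\star$-equation to control correlations between distant thick-point candidates. A second subtlety is extracting the \emph{exact} constant $1/\sqrt{\pi d}$ rather than just the order $T^{-1}$: this comes down to a direct asymptotic computation for the 3d-Bessel/meander, parallel in spirit to the first-moment calculation \eqref{eq:fm}. Finally, in the reduction step one must verify uniform-in-$G$ tail control so that the $G$-expectation commutes with the $T\to\infty$ limit; this follows from H\"older regularity of $G$ and standard Gaussian concentration of its supremum.
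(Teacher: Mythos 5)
The paper does not prove this theorem; it is imported from Wong's paper \cite{MDW19} and quoted as a black box (the only commentary in the survey is the remark about the Jordan-measurability hypothesis on $A$). So there is no ``paper proof'' to line your argument up against, and I can only assess the proposal on its merits and against what is actually done in \cite{MDW19}.

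Your strategy -- decompose $X$ into a $\star$-scale invariant part plus an independent H\"older field via \cref{thm:decomp}, then do the tail analysis for the $\star$-scale invariant case using the spine measure $\Q^{\beta,*}$ and Bessel/meander asymptotics -- is the natural ``derivative-martingale'' route, i.e., the continuous analogue of the tail analysis for the BRW derivative martingale. It is a coherent plan in outline, and your identification of single-thick-point domination as the central obstacle is exactly right. However, to my knowledge this is not the route taken in \cite{MDW19}: Wong's argument is a localization argument in the spirit of Rhodes--Vargas' tail paper \cite{RVtail}, using the Cameron--Martin (rooted-measure) shift to reduce to the behaviour near a single distinguished point, and then the exact $\star$-scaling relation \eqref{eqn:star} together with a critical-case renewal/Tauberian argument to extract the $t^{-1}$ decay and the constant. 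The constant $1/\sqrt{\pi d}$ there comes out of the scaling computation rather than directly from a meander density (although your observation that $1/\sqrt{\pi d}=\gamma_c^{-1}\sqrt{2/\pi}$ is a legitimate sanity check). So the two approaches are genuinely different; if yours worked it would be an alternative proof.

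There are several places where the proposal, as written, has gaps that go beyond ``technical details to be filled in.'' First, the passage from the tail of $D^\beta(B)$ to the tail of $\mu'(B)$ is not simply a matter of working ``on the high-probability event where the barrier events hold'': for each fixed $\beta$ there is a positive-probability event on which $\mu'(B)\ne D^\beta(B)$, and since the claimed tail is itself $O(t^{-1})$ one needs a joint asymptotic in $\beta$ and $t$ showing the discrepancy contributes only $o(t^{-1})$ after sending $\beta\to\infty$ \emph{before} $t\to\infty$, or a quantitative bound on $\P(\mu'(B)>t,\,\mu'(B)\ne D^\beta(B))$; this is nontrivial and not flagged. Second, the sentence ``Computing the $\Q^{\beta,*}$-probability of such a spine configuration via the transience of the 3d-Bessel process and the Brownian meander density at time $1$, then undoing the change of measure, yields an asymptotic density of thick points in $B$ equal to $|B|/(\sqrt{\pi d}\,T)$'' is where the entire theorem would actually get proved, and it is currently just an assertion: you need a precise statement about the joint law under $\Q^{\beta,*}$ of the spine terminal value and the mass $D^\beta(B)$, and the single-big-point replacement $\{D^\beta(B)>T\}\leftrightarrow\{\text{one thick point}\}$ has to be quantified with $o(T^{-1})$ error, not just a ``one big jump'' heuristic. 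Third, in the reduction step you need not only uniform-in-$G$ control of the $o(t^{-1})$ term but also the exact identity $\mu'_X = f_{\gamma_c}\,\mu'_S$ from \eqref{muX}, including the potential variance-correction factor; as stated the weight $\tilde g$ is missing that correction (it happens to be harmless when $X_\eps=S_\eps+R_\eps$ exactly, but this should be justified). In short: plausible outline, genuinely different from the cited proof, and the crucial estimates are all left as claims.
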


This theorem is due to Mo-Dick Wong \cite{MDW19}, who also proved a universality result for the tails of subcritical multiplicative chaos \cite{MDW19subcrit}, building on the beautiful paper \cite{RVtail} of Rhodes and Vargas. The author comments in \cite[Appendix D]{MDW19} about the technical assumption on $A\subset D$ (that is equivalent to $A$ being Jordan measurable). 

There is one case where an explicit law is known for the critical chaos measure, which is the case of the GFF on the unit circle: see \eqref{eqn:gff_cov_circle}. As mentioned previously, if $X$ is $(1/\sqrt{2})$ times this GFF, then $X$ is a one-dimensional log-correlated Gaussian field as in \eqref{eqn:kx}, and one can therefore define its chaos measure $\mu'_X$.\footnote{Note there is a different convention used in \cite{Rem20}:  $(\mu_X',\mu_X^{\gamma})$ here correspond to $(\sqrt{2}Y',Y^{\sqrt{2}\gamma})$ from \cite{Rem20}.} As part of a remarkable paper by Remy, the following is shown.  

\begin{theorem}[An explicit law, \cite{Rem20}]\label{thm:gumbel}
	$\ln(\sqrt{2}\mu'_X([0,2\pi]))$ has a standard Gumbel law. Equivalently, the density of $\sqrt{2}\mu'_X([0,2\pi])$
	is given by 
	\begin{equation} f(y)= y^{-2} \e^{-y^{-1}}\I_{y\ge 0}.\end{equation}
\end{theorem}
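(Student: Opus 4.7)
The plan is to exploit an explicit formula for the law of the \emph{subcritical} total GMC mass on the unit circle---namely, the Fyodorov--Bouchaud formula, which is the main result of the same paper by Remy---and to transfer it to the critical case via the derivative (equivalently, Seneta--Heyde) normalisation relating subcritical and critical measures.

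First, one recalls (or establishes) the Fyodorov--Bouchaud formula for $\gamma \in (0,\gamma_c)$ with $\gamma_c=\sqrt{2}$: the Mellin transform $q\mapsto \mathbb{E}[\mu^\gamma_X([0,2\pi])^q]$ is an explicit ratio of $\Gamma$-functions, valid for $q$ in the subcritical moment range (in particular for all $q<1$ when $\gamma$ is close enough to $\gamma_c$). Remy's proof of this formula proceeds via the BPZ differential equations of Liouville conformal field theory, applied to a suitable $4$-point correlation function with a reflection symmetry across $\partial \mathbb{D}$, from which the Laplace transform of the subcritical GMC total mass is extracted.

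Next, one invokes the derivative limit \eqref{eqn:derc_norm} for the GFF on the unit circle: as $\gamma\uparrow\gamma_c$, $(\gamma_c-\gamma)^{-1}\mu^\gamma_X \to c\,\mu'_X$ in an appropriate sense, with $c>0$ an explicit constant, where the limit coincides with the chaos measure $\mu'_X$ constructed in Theorem~\ref{thm:const_gen}. Taking $q$-th moments on both sides for $q<1$---which lies in the range of finite moments of $\mu'_X$ by the Moments theorem stated earlier---and plugging in the Fyodorov--Bouchaud formula on the left, the ratio of $\Gamma$-functions passes to the limit: the factor $(\gamma_c-\gamma)^q$ cancels a matching vanishing factor in the Mellin transform (reflecting that $\mu^\gamma_X\to 0$ at criticality, at the predicted rate $\gamma_c-\gamma$), leaving an explicit function of $q$ that one identifies as $\Gamma(1-q)$ after absorbing the $\sqrt{2}$ in the statement into the constant $c$. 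Mellin uniqueness on $(0,\infty)$ then pins down the law of $\sqrt{2}\mu'_X([0,2\pi])$ as the one with density $y^{-2}e^{-1/y}\mathbf{1}_{y>0}$, which is exactly the law of $e^G$ for a standard Gumbel $G$, giving the theorem after taking logarithms.

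The main obstacle is the limit exchange in the preceding step: convergence in probability of $(\gamma_c-\gamma)^{-1}\mu^\gamma_X([0,2\pi])$ to $c\,\mu'_X([0,2\pi])$ does not immediately entail convergence of their $q$-th moments. To justify this one needs uniform integrability of the family $(\gamma_c-\gamma)^{-q}\mu^\gamma_X([0,2\pi])^q$ as $\gamma\uparrow\gamma_c$, which can be extracted by controlling a slightly larger moment $q'\in (q,\min(1,2/\gamma^2))$ using the Fyodorov--Bouchaud formula itself; the resulting bound is uniform in $\gamma$ near $\gamma_c$. A cleaner alternative that sidesteps the UI issue is to work throughout with Laplace transforms: Fyodorov--Bouchaud supplies $\mathbb{E}[\exp(-t\mu^\gamma_X([0,2\pi]))]$ in closed form, and passing to the limit $\gamma\uparrow\gamma_c$ together with tightness of $(\gamma_c-\gamma)^{-1}\mu^\gamma_X([0,2\pi])$ directly identifies the Laplace transform of the limit as that of an inverse-exponential---whose logarithm is standard Gumbel.
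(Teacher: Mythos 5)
Your proposal matches the route the paper indicates: the paper explicitly points out that \cite{Rem20} first establishes the Fyodorov--Bouchaud formula for subcritical total mass on the circle, and then proves \cref{thm:gumbel} by combining it with the fact that $\mu'_X$ arises as the limit $\lim_{\gamma\uparrow\gamma_c}(\gamma_c-\gamma)^{-1}\mu_X^\gamma$ from \cref{thm:gmcasder}, passing the explicit $\Gamma$-ratio moment formula to the limit. Your identification of the key technical obstacle (uniform integrability of $q$-th moments when exchanging the limit) and your two ways around it (a uniformly bounded slightly-higher moment from the Fyodorov--Bouchaud formula itself, or a Laplace-transform argument) are the right considerations and consistent with the cited proof.
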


In fact, the main result of \cite{Rem20} is an explicit expression, the Fyodorov--Bouchaud formula \cite{FB08}, for the law of subcritical GMC masses using techniques from conformal field theory. The proof of the above theorem is based on the fact that $\mu_X'$ can be expressed as a limit of subcritical measures (see the next section). \cref{thm:gumbel} also turns out to be particularly relevant in the context of some extreme value statistics: see \cref{sec:extrema}.

Finally, let us mention what is known concerning the modulus of continuity of the critical measure. This is a topic that is not yet fully understood, but results do exist in the case of the GFF on the unit circle. This field is a little easier to analyse than in the general setting, due to the exact scaling property of the covariance kernel \eqref{eqn:gff_cov_circle}. 

For this particular field at least, the modulus of continuity is another place where a distinct contrast with the subcritical regime appears. Namely, it is known that the subcritical measures ($\gamma<\sqrt{2}$) are almost surely H\"{o}lder continuous, in the sense that there exist deterministic constants $a_1(\gamma),a_2(\gamma)$ such that with probability one $$C^{-1}|I|^{a_1}\le \mu^\gamma(I)\le C |I|^{a_2}$$
for all intervals $I$ and some random but a.s.\ finite $C$, \cite{AJKS10}.  When $\gamma=\gamma_c=\sqrt{2}$, however, this is known not to be the case. It is shown in \cite{BKNSW2} that instead, for any $a<1/2$:
\begin{equation}
\P(\exists C<\infty \text{ s.t. } \mu'(I)\le C(\log(1+|I|^{-1}))^{-a} \text{ for all intervals } I)=1.
\end{equation}
This is \emph{not} shown to be optimal in \cite{BKNSW2}, but one may expect that it is, due to the analogous result proved for multiplicative cascades in \cite{BKNSW}. On the other hand, \cite{BKNSW2} shows that for any $\alpha>1/3$, on a set of points $x$ with full $\mu'$-measure,  it holds that 
\begin{equation}
\mu'(I_n(x))\ge \exp(-c\sqrt{n(\log n + \alpha \log \log n)}) \text{ eventually},
\end{equation} where $I_n(x)$ is the dyadic interval of size $2^{-n}$ containing $x$. One consequence of this is that $\mu'$ gives full mass to a set of Hausdorff dimension 0.
\section{Limits from the subcritical regime}\label{sec:der}

The main result of this section is \cref{thm:gmcasder} below; saying that critical Gaussian multiplicative chaos measures can be constructed as ``derivatives'' of subcritical measures. This is not too hard to believe, given that the critical measures are obtained by taking derivatives of \emph{approximations} to subcritical measures, and then letting the approximations converge. The problem therefore becomes one of exchanging limits: derivative and approximation. 

 It turns out however that such an exchange of limits \emph{cannot} be justified. Namely, there is a factor of $2$ appearing on the right-hand side of \eqref{eq:gmcasder} below. Roughly speaking, the reason for this is that for $\gamma<2$ there are (almost symmetric) contributions to $\mu^\gamma$, coming from points $x$ such that $X_t(x)$ stays slightly above, and slightly below, $\gamma t$. Let us denote these by $\mu^\gamma_+$ and $\mu^\gamma_-$ respectively. On the other hand, as discussed in the previous section, contributions to $\mu'$ can \emph{only} come from points $x$ such that $X_t(x)$ stays below $\gamma_c t$. As such it is actually the derivative of $\mu^\gamma_-$ (not $\mu^\gamma$) in $\gamma$ that has the law of $\mu'$. Somewhat surprisingly, the derivative of $\mu^\gamma_+$ has the law of $\mu'$ as well; this is what results in the final factor $2$. For more detailed discussion of this point, the reader is referred to \cite{Mad16} in the setting of the branching random walk, and \cite{APS18two} in the setting of chaos for the planar GFF.

\begin{theorem}
\label{thm:gmcasder}
Suppose that $X$ is a Gaussian log-correlated field with covariance as in \eqref{eqn:kx}. Then when restricted to any $K\subset D$ compact, 
\begin{equation}\label{eq:gmcasder}
\lim_{\gamma\uparrow \gamma_c} \frac{\mu^\gamma}{\gamma_c-\gamma} \to 2 \mu'
\end{equation}
in probability (in the same sense as in \cref{thm:const_gen}).
\end{theorem}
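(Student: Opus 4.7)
By the decomposition theorem \cref{thm:decomp}, on any compact $K\subset D$ one may write $X=\tilde X+G$ with $\tilde X$ a $\star$-scale invariant field and $G$ an independent H\"older continuous Gaussian field. The correction factor $\e^{\gamma G(x)-\frac{\gamma^2}{2}\E[G(x)^2]}$ depends continuously on $\gamma$ as $\gamma\uparrow\gamma_c$ and multiplies both sides of \eqref{eq:gmcasder} in the same way --- this is the same ``cosmetic'' reduction used at the end of \cref{thm:const_gen} to pass from the $\star$-scale case to general log-correlated fields. I therefore assume throughout that $X$ is $\star$-scale invariant, with $\star$-scale cut-off approximations $(X_t)_{t\ge 0}$. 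Fix $A\subset\R^d$ compact and $\beta>0$, and introduce the barrier-truncated subcritical approximations
\[
\nu^{\gamma,\beta}_t(dx):=\1_{A^\beta_t(x)}\,\e^{\gamma X_t(x)-\frac{\gamma^2}{2}t}\,dx
\]
with $A^\beta_t(x)$ as in \eqref{def:dtbatb}. These are bounded above by $\mu^\gamma_t$ and, as $t\to\infty$, converge to a limit $\nu^{\gamma,\beta}_\infty$ that I will compare to the derivative martingale limit $D^\beta$ of \cref{thm:const_gen}(I).

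Applying Girsanov's theorem to the martingale $\e^{\gamma X_s(x)-\gamma^2 s/2}$ turns $(-X_s(x)+\gamma_c s)_{s\ge 0}$ into a Brownian motion with positive drift $(\gamma_c-\gamma)$, and the standard one-dimensional barrier identity $\P(\tau_{-\beta}=\infty)=1-\e^{-2\mu\beta}$ for BM-with-drift-$\mu$ gives
\[
\E\bigl[\nu^{\gamma,\beta}_\infty(A)\bigr]=|A|\bigl(1-\e^{-2(\gamma_c-\gamma)\beta}\bigr).
\]
Dividing by $(\gamma_c-\gamma)$ and letting $\gamma\uparrow\gamma_c$ produces $2\beta|A|=2\,\E[D^\beta(A)]$; this is the analytic source of the factor $2$ in \eqref{eq:gmcasder}, consistent with the ``symmetric contributions'' heuristic in the paragraph preceding the statement via the Taylor expansion $(1-\e^{-2x})/x\to 2$. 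I would upgrade this first-moment asymptotic to convergence in $L^2$ (hence in probability) by running a spine-decomposition/second-moment argument of exactly the form of the uniform integrability argument for $D^\beta_t$ in \cref{thm:const_gen}(I): change measure by $\nu^{\gamma,\beta}_\infty/\E[\nu^{\gamma,\beta}_\infty]$, so that under the tilted law $x^*$ is uniform on $A$ and the spine $s\mapsto(-X_s(x^*)+\gamma_c s+\beta)$ has the law of a Brownian motion with drift $(\gamma_c-\gamma)$ Doob $h$-transformed to stay non-negative, with $h(x)=(1-\e^{-2(\gamma_c-\gamma)(x+\beta)})/(1-\e^{-2(\gamma_c-\gamma)\beta})$. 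As $\gamma\uparrow\gamma_c$ this $h$ converges pointwise to $x\mapsto 1+x/\beta$ --- the very Doob transform that converts Brownian motion into a $3d$-Bessel process from $\beta$, the spine already appearing under $\qbs$ in \cref{sec:spine}. With uniform-in-$\gamma$ moment control of the two-point function, the computation of \cref{thm:const_gen}(I) goes through and delivers $(\gamma_c-\gamma)^{-1}\nu^{\gamma,\beta}_\infty(A)\to 2D^\beta(A)$ in $L^2$.

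The principal obstacle is what remains: showing that the untruncated remainder $(\mu^\gamma-\nu^{\gamma,\beta}_\infty)/(\gamma_c-\gamma)$ vanishes in probability in the double limit $\gamma\uparrow\gamma_c$ followed by $\beta\to\infty$, so that combining with the almost sure monotone convergence $D^\beta\uparrow\mu'$ finishes the proof. The difficulty is structural: the first moment of the remainder equals $|A|\,\e^{-2(\gamma_c-\gamma)\beta}$, which divided by $(\gamma_c-\gamma)$ \emph{diverges} for fixed $\beta$ as $\gamma\uparrow\gamma_c$, so no $L^1$ bound is available. The strategy must instead be a second-moment argument via a spine decomposition under $\Q^{\gamma,*}$ on the bad event $\{\inf_{s\ge 0}(-X_s(x^*)+\gamma_c s)<-\beta\}$: conditioning on the first time $\tau_\beta$ at which the spine falls below $-\beta$ and invoking the strong Markov property factorises the bad contribution into an explicit one-dimensional barrier-crossing prefactor of order $\e^{-2(\gamma_c-\gamma)\beta}$ and a conditional subcritical-GMC mass that one controls in $L^2$ using the non-atomicity of $\mu'$ established in \cref{thm:JS}. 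This is the continuum analogue of the ``barrier-lowering'' step carried out in \cite{Mad16} for the branching random walk and in \cite{APS18two} for the planar GFF, and it is where essentially all of the technical content of the proof concentrates.
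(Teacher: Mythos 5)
Your reduction to the $\star$-scale invariant case via \cref{thm:decomp} matches the paper exactly, but from there your route diverges entirely from the one used in \cref{prop:gmcasder}. The paper couples the approximation scale to $\gamma$ by setting $\sqrt{t}=C/(\gamma_c-\gamma)$, rewrites $\mu^\gamma_t/(\gamma_c-\gamma)$ as a Seneta--Heyde functional, applies \cref{lem:sh} with the (unbounded, truncated) test function $F(x)=\e^{Cx}$, and obtains the factor $2$ from the Brownian meander Laplace-transform asymptotic $\E[\e^{CR_1}]\sim\sqrt{2\pi}C\e^{C^2/2}$. The remainder $\mu^\gamma_t-\mu^\gamma$ is then killed by a von Bahr--Esseen argument based on the $\star$-equation and, crucially, the uniform-in-$\gamma$ bound on \emph{fractional} moments of order $p_\gamma=1+(\gamma_c-\gamma)/\gamma_c\to 1$ from \cref{lem:p-1} and \cref{cor:p-1}. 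Your barrier-truncation strategy is much closer to Madaule's branching-random-walk argument than to the paper's proof, and your Girsanov/exit-probability identity $(1-\e^{-2(\gamma_c-\gamma)\beta})/(\gamma_c-\gamma)\to 2\beta$ is a nice, independent explanation of where the factor $2$ comes from.

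However, there is a structural obstruction to the two second-moment arguments you propose. For fixed $\gamma<\gamma_c$ one has $\E[\mu^\gamma(A)^2]=\infty$ as soon as $\gamma>\sqrt{d}=\gamma_c/\sqrt{2}$, and the barrier truncation does not restore uniform second-moment control as $\gamma\uparrow\gamma_c$: a two-point computation near the diagonal shows that the second moment of $\nu^{\gamma,\beta}_\infty(A)$ blows up as $\gamma\uparrow\gamma_c$, and the limiting object $D^\beta(A)$ itself has at best marginal second-moment behaviour (recall that $\mu'(A)$ has no moments of order $\ge 1$). The precise reason the paper works with moment exponents $p_\gamma\downarrow 1$, rather than anything fixed and $>1$, is that this is the largest exponent that can be controlled uniformly in $\gamma$; $L^2$ is out of reach. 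So ``upgrade the first-moment asymptotic to convergence in $L^2$'' and ``a conditional subcritical-GMC mass that one controls in $L^2$'' both run into the same wall.

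Finally, you explicitly flag that the estimate on $(\mu^\gamma-\nu^{\gamma,\beta}_\infty)/(\gamma_c-\gamma)$ is the ``principal obstacle'' and leave it at the level of a strategy sketch. That is indeed where essentially all the content of the theorem lies, and it is precisely what the paper's covering of $[0,1]^d$ by $\e^{-t}$-boxes, the conditional independence under the $\star$-equation, the von Bahr--Esseen inequality, and the uniform $p_\gamma$-moment bounds accomplish. Your outline of conditioning on the first barrier-crossing time $\tau_\beta$ is sensible and in the spirit of \cite{Mad16}, but as written it is not quantitative and the moment tool you propose to finish it with will not close. If you want to pursue this route, the fix is to replace $L^2$ by a carefully tuned fractional moment $p_\gamma\to 1$ --- at which point you will essentially rediscover the mechanism of \cref{lem:p-1} and \cref{cor:p-1}.
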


This result was conjectured in \cite{DRSV14one}, and an analogous version proven for the branching random walk in \cite{Mad16}. However, it was not proven in the Gaussian multiplicative chaos setting until \cite{APS18one,APS18two}, in which the underlying field is assumed to be a planar Gaussian free field (or a GFF on the unit circle). The papers \cite{APS18one,APS18two} make use of the special ``local set'' approximation to the free field described in \eqref{eqn:cle_approx} to transfer the result of \cite{Mad16} to the GFF. As a consequence, \cref{thm:decomp} allows this to be carried over, see \cite[Theorem 5.5]{JSW19}, to general log-correlated fields in 2-dimensions. 

In $d$-dimensions, there seems to be no general result in the literature so far. However, the same comparison strategy can be used as long as \cref{thm:gmcasder} is shown for a good enough $d$-dimensional reference field. This is exactly what will be carried out here. Namely, the idea behind the proof of \cref{thm:gmcasder} is to show the analogous result for $\star$-scale invariant fields, and then use the decomposition result \cref{thm:decomp} to draw the general conclusion. 

\begin{prop}\label{prop:gmcasder}
	Suppose that $X$ is a $\star$-scale invariant field as in \eqref{eqn:ssi}. Then the conclusion of \cref{thm:gmcasder} holds.
\end{prop}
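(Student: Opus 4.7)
The plan is as follows. Since both $\mu^\gamma$ and $\mu'$ are a.s.\ non-atomic, it suffices to prove $\mu^\gamma(A)/(\gamma_c-\gamma)\to 2\mu'(A)$ in probability for each fixed compact $A\subset\R^d$. I work with the $\star$-scale cut-off approximations $(X_t)_{t\ge 0}$ and adapt the truncation/spine argument used in Part~(I) of the sketch of \cref{thm:const_gen}, mirroring Madaule's proof for branching random walks. For $\beta>0$ and $\gamma\in(0,\gamma_c)$, introduce the subcritical analogue of the truncated derivative measure $D^\beta_t(A)$,
\[
\mu^{\gamma,\beta}_t(A) := \int_A \e^{\gamma X_t(x)-(\gamma^2/2)t}\,\I_{A^\beta_t(x)}\,dx,\qquad A^\beta_t(x) := \{X_s(x)\le \gamma_c s+\beta\ \forall s\in[0,t]\}.
\]
Because the indicator is non-increasing in $t$ and $\mu^{\gamma,\beta}_t(A)\le\mu^\gamma_t(A)$, which is a uniformly integrable martingale for $\gamma<\gamma_c$, there is an $L^1$-limit $\mu^{\gamma,\beta}_\infty(A)$. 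The Girsanov calculation of \cref{sec:spine} makes $\gamma_c s-X_s(x)$ a Brownian motion with drift $(\gamma_c-\gamma)>0$ started at $0$, so the standard infimum formula gives $\E[\mu^{\gamma,\beta}_\infty(A)] = |A|(1-\e^{-2(\gamma_c-\gamma)\beta})\sim 2|A|\beta(\gamma_c-\gamma)$ as $\gamma\uparrow\gamma_c$; compared with $\E[D^\beta(A)]=|A|\beta$, this is where the factor $2$ first appears, via the linearisation $1-\e^{-2(\gamma_c-\gamma)\beta}\sim 2(\gamma_c-\gamma)\beta$.

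The main technical step is to upgrade this first-moment identity to
\[
\mu^{\gamma,\beta}_\infty(A)/(\gamma_c-\gamma)\to 2D^\beta(A)\text{ in }L^2(\P)\text{ as }\gamma\uparrow\gamma_c,
\]
for each fixed $\beta>0$, via a rooted-measure/second-moment argument paralleling the proof of the uniform-integrability bound \eqref{eqn:ui_bound}. Define the tilted measure $\Q^{\gamma,\beta,*}$ from $\P^*$ by biasing with $\mu^{\gamma,\beta}_t(A)/(|A|\beta)$. Under $\Q^{\gamma,\beta,*}$ the spine $s\mapsto \gamma_c s-X_s(x^*)+\beta$ is a Brownian motion with drift $(\gamma_c-\gamma)$ started at $\beta$ and Doob-conditioned to stay positive; its $h$-function $(1-\e^{-2(\gamma_c-\gamma)y})/(1-\e^{-2(\gamma_c-\gamma)\beta})$ converges pointwise to $y/\beta$ as $\gamma\uparrow\gamma_c$, so the conditioned spine converges in law on compact intervals to a $3$d-Bessel process started at $\beta$, which is exactly the spine for $D^\beta(A)$ under $\qbs$. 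Combining this with the decorrelation property of $(X_t)$ recalled below \eqref{eqn:starcut}, one obtains the second-moment matching uniformly in $\gamma$ and $t$, whence the stated $L^2$-convergence.

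It remains to remove the truncation, i.e.\ to handle the residue $R^{\gamma,\beta}(A) := \mu^\gamma(A)-\mu^{\gamma,\beta}_\infty(A)$ collecting the contribution of points whose path escapes above $\gamma_c s+\beta$. Since $\E[R^{\gamma,\beta}(A)] = |A|\e^{-2(\gamma_c-\gamma)\beta}$ is of the same order as $\E[\mu^\gamma(A)]$ for fixed $\beta$ as $\gamma\uparrow\gamma_c$, first-moment bounds are useless, and the identification of $R^{\gamma,\beta}(A)/(\gamma_c-\gamma)\to 2(\mu'(A)-D^\beta(A))$ has to be carried out at the level of probability limits uniformly in the vanishing Girsanov drift. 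Following Madaule \cite{Mad16}, I would apply the strong Markov/decorrelation property of $(X_t)$ at the last time the path is at height $\gamma_c s+\beta$ to express $R^{\gamma,\beta}(A)/(\gamma_c-\gamma)$ as an integral of ``post-escape'' copies of the truncated setup from the previous paragraph, and iterate over $\beta'>\beta$ using $D^{\beta'}(A)\uparrow\mu'(A)$. This removal of the truncation is by far the most delicate part of the proof: it is the continuous analogue of the main technical proposition of \cite{Mad16}, and is where the bulk of the adaptation of the branching-random-walk argument to the $\star$-scale invariant setting will lie.
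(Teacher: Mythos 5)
Your route is genuinely different from the one in the paper. You propose a barrier/truncation decomposition in the spirit of the Biggins--Kyprianou and Madaule arguments for branching random walks: split $\mu^\gamma$ into a piece $\mu^{\gamma,\beta}_\infty$ where the trajectory $s\mapsto X_s(x)$ stays below $\gamma_c s+\beta$ for all $s$, plus a residue $R^{\gamma,\beta}$, and then show each piece converges to $2D^\beta(A)$ and $2(\mu'(A)-D^\beta(A))$ respectively. The paper instead splits by \emph{time}: setting $t=C^2/(\gamma_c-\gamma)^2$, it compares $\mu^\gamma$ with $\mu^\gamma_t$ via uniform $p_\gamma$-th moment bounds (\cref{cor:p-1}) and the conditional von Bahr--Esseen inequality, and then applies the already-proven Seneta--Heyde Lemma~\ref{lem:sh} with test function $F(x)=\e^{Cx}$ to the inner piece. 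That choice makes the paper's proof quite economical; your route requires re-deriving essentially all of the spine second-moment machinery in a $\gamma$-uniform form. Your identification of the factor $2$ via the first-moment identity $\E[\mu^{\gamma,\beta}_\infty(A)]=|A|(1-\e^{-2(\gamma_c-\gamma)\beta})$ and its linearisation is correct and a pleasant observation.

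However, there is a real gap, and it is precisely where you flag it: the removal of the truncation. The residue cannot be dispatched by a Markov/first-moment bound, because $\E[R^{\gamma,\beta}(A)]/(\gamma_c-\gamma)=|A|\e^{-2(\gamma_c-\gamma)\beta}/(\gamma_c-\gamma)\to\infty$ as $\gamma\uparrow\gamma_c$ for every fixed $\beta$, and there is no diagonal choice $\beta=\beta(\gamma)$ that simultaneously makes this vanish and keeps $\E[\mu^{\gamma,\beta}_\infty(A)]/(\gamma_c-\gamma)$ bounded (the two constraints require $(\gamma_c-\gamma)\beta\to\infty$ and $(\gamma_c-\gamma)\beta\lesssim 1$ respectively). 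So what must be proved is the probability-limit statement $R^{\gamma,\beta}(A)/(\gamma_c-\gamma)\to 2(\mu'(A)-D^\beta(A))$ for fixed $\beta$, which is not a small remainder but a piece of the same order as the main theorem. Your one-sentence plan --- decompose by the last barrier-passage time and iterate over $\beta'$ --- does not explain how to break this circularity, and in Madaule's treatment this step is the bulk of the technical work; it relies heavily on the discrete tree structure (conditioning on the first hitting generation, resampling the subtree), which has no direct analogue for the $\star$-scale cut-off process. You would need to develop a continuum substitute (e.g.\ a Markovian restriction argument on the scale $s$ at which the barrier is first crossed) and this is where the real difficulty lies. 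A second, smaller gap: the $L^2(\P)$-convergence $\mu^{\gamma,\beta}_\infty(A)/(\gamma_c-\gamma)\to 2D^\beta(A)$ presupposes $D^\beta(A)\in L^2(\P)$. This should be true for fixed $\beta$ (the spine calculation $\E[D^\beta(A)^2]=\beta|A|\,\E_{\Q^{\beta,*}}[D^\beta(A)]$ can be shown finite by bounding the $\Q^{\beta,*}$-expectation using the Bessel-3 drift, much as in the paper's uniform-integrability proof), but it must be proved; the heuristic that the Doob-conditioned drifted walk converges locally to a Bessel-3 process does not by itself give second-moment control uniformly in $\gamma$.
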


\begin{proof}[Proof of \cref{thm:gmcasder} given \cref{prop:gmcasder}]
By \cref{thm:decomp} it is possible to write $X=S+R$ on $K$, where $S$ is a $\star$-scale invariant field as in \cref{prop:gmcasder} and $R$ is a Gaussian field with a.s.\ locally H\"{o}lder continuous realisations. Let $\mu_S^\gamma$ for $\gamma<\sqrt{2d}=\gamma_c$ be the subcritical chaoses associated to $S$ and $\mu_S'$ be the critical chaos associated to $S$ as in \cref{thm:const_gen}. Then by \cref{prop:gmcasder} it holds that \begin{equation}\label{muS}\frac{\mu^\gamma_S}{\gamma_c-\gamma}\to 2\mu'_S \text{ as } \gamma\uparrow \gamma_c\end{equation} in probability in the weak* sense (as measures restricted to $K$). Furthermore, suppose that $\mu_X^\gamma$ are the subcritical chaoses associated to $X$, constructed using some convolution approximation $(X_\eps; \eps>0)$ to $X$ say. Then by the continuity of $R$: \begin{align}\label{muX}\mu^\gamma_X(dx)& =\e^{\gamma R(x)-(\gamma^2/2)\E(R(x)^2)}\e^{-(\gamma^2/2) \lim_{\eps\to 0}\E(X_\eps(x)^2-R_\eps(x)^2-S_\eps(x)^2)} \mu_S^\gamma(dx)=: f_\gamma(x) \mu_S^\gamma(dx)\text{ and } \nonumber \\\mu'_X(dx) &=\e^{\gamma_c R(x)-(\gamma_c^2/2)\E(R(x)^2)}\e^{-(\gamma_c^2/2) \lim_{\eps\to 0}\E(X_\eps(x)^2-R_\eps(x)^2-S_\eps(x)^2)} \mu_S'(dx)=: f_{\gamma_c}(x) \mu_S'(dx),
\end{align}
where $f_\gamma(x)\to f_{\gamma_c}(dx)$ in probability (for the topology of continuous functions on $K$) as $\gamma\uparrow \gamma_c$ (see \cite[\S 5]{JSW19}). Thus, combining \eqref{muS} and \eqref{muX}, the result follows from \cite[Lemma 5.2 (iii)]{JSW19}. This lemma is the natural statement that if $f_n\to f$ in probability (as continuous functions on a compact set $K$) and $\mu_n\to \mu$ in probability (in the weak* sense for measures on a compact set $K$) then $f_n\mu_n \to f\mu$ in probability in the weak* sense as measures on $K$.
\end{proof}

\subsection{Uniform moment bounds in $\gamma$}
This section includes some technical moment bounds that are required for the proof of \cref{prop:gmcasder}. The strategy is similar to that used in \cite{APS18two}, although the details are somewhat different.

\begin{lemma}\label{lem:p-1}
	Let $p_\gamma:=1+\frac{\gamma_c-\gamma}{\gamma_c}\in (1,2)$, and suppose that $X$ is a $\star$-scale invariant field as in \eqref{eqn:ssi}, with $\mu^\gamma$ its associated subcritical chaos measure. 
	Then for $\gamma\ge 1$ there exists a constant $C$ not depending on $\gamma$, such that
	\begin{equation}\label{eqn:moment_p}\E\big((\int_{B(0,1)}|y|^{-\gamma^2} \mu^\gamma(dy))^{p_\gamma-1}\big)\le C\end{equation}
\end{lemma}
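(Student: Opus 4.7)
My plan is to bound $\E[Z_\gamma^q]$ (with $q := p_\gamma - 1 = (\gamma_c - \gamma)/\gamma_c \in (0,1)$ and $Z_\gamma := \int_{B(0,1)} |y|^{-\gamma^2}\mu^\gamma(dy)$) by extracting a sharp tail estimate for $Z_\gamma$ from the $\star$-equation and then invoking the layer-cake formula. The key point is that a crude dyadic annular decomposition combined with subadditivity of $x\mapsto x^q$ and the multifractal formula \eqref{eqn:mf} loses a factor that diverges like $(\gamma_c-\gamma)^{-2}$; to obtain uniformity one must exploit the precise cascade structure.

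First I would apply the $\star$-equation \eqref{eqn:star} at scale $t = \log 2$ and split $B(0,1) = B(0,1/2) \cup (B(0,1)\setminus B(0,1/2))$. Using that $X_{\log 2}(y) \approx X_{\log 2}(0)$ on $B(0,1/2)$ (up to $O(1)$ Gaussian fluctuation absorbable into the constant) and rescaling $y\mapsto 2y$, one derives the distributional identity
\begin{equation*}
Z_\gamma \stackrel{d}{=} A_\gamma Z_\gamma' + R_\gamma, \qquad A_\gamma := 2^{\gamma^2 - d}\,\e^{\gamma B_{\log 2} - \gamma^2(\log 2)/2},
\end{equation*}
with $B$ a standard Brownian motion, $Z_\gamma'$ an independent copy of $Z_\gamma$, and $R_\gamma$ the outer annular contribution, dominated by $2^{\gamma^2}\mu^\gamma(B(0,1))$ and having uniformly bounded moments via \eqref{eqn:mf}. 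The Cram\'er equation $\E[A_\gamma^p] = 1$ has the unique positive solution $p_c(\gamma) := \gamma_c^2/\gamma^2 - 1$, and one directly checks $p_c(\gamma) > q$ uniformly in $\gamma \in [1,\gamma_c)$, with $p_c(\gamma)/q \to 2$ at the endpoint. Applying the Kesten--Goldie theorem to this random recurrence yields the tail bound $\P(Z_\gamma > t) \le C_\gamma t^{-p_c(\gamma)}$ for $t \ge 1$, whose hypotheses (non-arithmeticity of $\log A_\gamma$, moment conditions on $R_\gamma$) are straightforward to verify. The layer-cake formula then gives
\begin{equation*}
\E[Z_\gamma^q] \;=\; 1 + q\int_1^\infty t^{q-1}\P(Z_\gamma > t)\,dt \;\le\; 1 + \frac{C_\gamma q}{p_c(\gamma) - q},
\end{equation*}
and since both $q$ and $p_c(\gamma) - q$ vanish linearly in $(\gamma_c - \gamma)$, the prefactor $q/(p_c(\gamma) - q)$ stays uniformly bounded as $\gamma \uparrow \gamma_c$.

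The \textbf{main obstacle} is uniformity of the Kesten--Goldie constant $C_\gamma$: its standard expression contains a factor $1/\E[A_\gamma^{p_c}\log A_\gamma] = 1/[(d - \gamma^2/2)\log 2]$, which diverges as $\gamma \uparrow \gamma_c$. However, the residue identity $\lim_{p \to p_c^-}(p_c - p)\E[Z_\gamma^p] = p_c c_*$ shows that this apparent divergence in $C_\gamma$ is precisely compensated by the vanishing factor $p_c(\gamma) - q$ in the layer-cake denominator, giving a finite overall bound. Tracking this cancellation rigorously is the technical heart of the proof. A more hands-on alternative, which I would pursue in parallel, is to iterate the fixed-point equation into the explicit series $Z_\gamma \stackrel{d}{=} \sum_{k \ge 0}(\prod_{i \le k}A_\gamma^{(i)})R_\gamma^{(k)}$ with i.i.d.\ factors, then control $\E[Z_\gamma^q]$ via subadditivity of $x\mapsto x^q$ together with a H\"older interpolation at some intermediate moment $p \in (q, p_c(\gamma))$, where a bound on $\E[Z_\gamma^p]$ can be derived annulus-by-annulus from the multifractal estimate \eqref{eqn:mf}.
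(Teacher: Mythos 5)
Your strategy---derive a random recurrence from the $\star$-equation, extract a Kesten--Goldie tail bound $\P(Z_\gamma>t)\le C_\gamma t^{-p_c(\gamma)}$, and close via the layer-cake formula---identifies the right critical exponent $p_c(\gamma)=\gamma_c^2/\gamma^2-1$, but the uniformity argument has a genuine gap. You acknowledge that $C_\gamma\sim[(d-\gamma^2/2)\log 2]^{-1}$ blows up as $\gamma\uparrow\gamma_c$, and you claim this is ``precisely compensated by the vanishing factor $p_c(\gamma)-q$'' via the residue identity. That compensation does not occur at the relevant scale: the residue identity describes $\E[Z_\gamma^p]$ as $p\to p_c^-$, whereas here $q:=p_\gamma-1$ stays bounded away from $p_c$ in ratio (one checks $q/p_c\to 1/2$ as $\gamma\uparrow\gamma_c$). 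Consequently the layer-cake factor $q/(p_c-q)=\gamma^2/(\gamma_c^2+\gamma_c\gamma-\gamma^2)$ is \emph{bounded} (it lies in $(0,1]$ and tends to $1$), so it cannot absorb the divergence of $C_\gamma$; the bound $1+C_\gamma q/(p_c-q)$ grows like $(\gamma_c-\gamma)^{-1}$. The true reason $\E[Z_\gamma^q]$ stays bounded is that the power-law tail $C_\gamma t^{-p_c}$ is only sharp beyond a crossover scale $t_\gamma^*\to\infty$, and below that scale the tail is far smaller; applying the Kesten--Goldie bound down to $t=1$ is exactly what loses uniformity. Your fallback (iterate the fixed point and use sub-additivity plus a H\"older interpolation at some $p\in(q,p_c)$) is not developed far enough to tell whether it escapes the same problem.

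The paper's proof takes a different and more hands-on route that sidesteps tail estimates entirely. Using the spine decomposition of the $\star$-scale field at the origin from \cite{DRSV14one}, it conditions on $\cG=\sigma(X_u(0);u\ge 0)$ and applies \emph{conditional} Jensen to reduce the $q$-th moment to the first conditional moment, which is computed explicitly and bounded by $\e^{\gamma q\sup_{u\le-\log|y|}X_u(0)}\int|y|^{-\gamma^2/2}\,dy$ raised to power $q$. Then it splits $B(0,1)$ into geometrically stretched annuli $|y|\in(r_{n+1},r_n]$ with $r_n=2^{-n/(p_\gamma-1)^2}$, uses sub-additivity of $x\mapsto x^q$, and bounds the exponential of the running maximum of the spine Brownian motion directly; the stretching by $(p_\gamma-1)^{-2}$ is precisely tuned so that the resulting geometric series has ratio bounded away from $1$ uniformly in $\gamma$. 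This yields an explicit uniform constant with no appeal to Kesten--Goldie or crossover analysis.
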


\begin{proof}
	Recall that the covariance kernel of $X$ is of the form 
	\begin{equation}
	K(x,y)=\int_1^{\infty} \frac{k(u(x-y))}{u} \, du
	\end{equation}
	 with $k\in C^1(\R^d)$ rotationally symmetric and supported inside $B(0,1)$, such that $k(0)=1$ and $(x,y)\mapsto k(x-y)$ is a covariance on $\R^d$.
	 Let us assume that $x\cdot \nabla k(x)\le 0$ on $B(0,1)$: this is no loss of generality since the inequality must hold in $B(0,a)$ for some $a>0$, and the result in \eqref{eqn:moment_p} is clearly true if the integral is restricted to $B(0,1)\setminus B(0,a)$.
	 
	 Recall also the $\star$-scale cut-off approximations $(X_t;\, t\ge 0)$. Then \cite[Lemma 16]{DRSV14one} says that for $s\ge -\log(|y|)$, $X_{s}(y)$ can be decomposed as 
	 \begin{equation} -\int_0^{-\log|y|} g_u(y) X_u(0) \, du + Z^y + (X_s(y)-X_{-\log|y|}(y)) \end{equation}
	 where $-g_u$ is non-negative with $-\int_0^{-\log|y|} g_u=k(y)\le 1$, $Z^y$ is a centred Gaussian, independent of $(X_u(0);\, u\ge 0)$ with variance bounded by some constant $C$ independently of $y$, and $(X_s(y)-X_{-\log|y|}(y))$ is a standard linear Brownian motion independent of $Z^y$  and $(X_u(0);\, u\ge 0)$. This gives mathematical content to the heuristic that the Brownian motions $X_s(0)$ and $X_s(y)$ are ``the same'' until time $-\log|y|$, and after that, evolve independently.
	 
	 Since $p_\gamma-1\in (0,1)$, it holds by (conditional) Jensen's inequality that writing $\cG=\sigma((X_u(0); u\ge 0))$
	 \begin{align}
	 \label{align_final}
	 \E((\int_{B(0,1)}|y|^{-\gamma^2} \mu^\gamma(dy))^{p_\gamma -1} \, | \, \cG) & \le \E(\int_{B(0,1)}|y|^{-\gamma^2} \mu^\gamma(dy)\, | \, \cG)^{p_\gamma-1} \nonumber \\
	  & \lesssim ( \int_{B(0,1)} |y|^{-\gamma^2/2} \e^{-\gamma \int_0^{-\log|y|}g_u(y)X_u(0) \, du} \, dy)^{p_\gamma-1}\nonumber \\
	  & \le \e^{\gamma(p_\gamma-1) \sup_{u\in[0,-\log|y|]}X_u(0)}( \int_{B(0,1)} |y|^{-\gamma^2/2} \, dy)^{p_\gamma-1}
	 \end{align}
	 with the implied constant in the second line not depending on $\gamma$. The second line has also used the fact that $\exp(-(\gamma^2/2)\var(X_{-\log|y|}(y)))=|y|^{-\gamma^2/2}$.
	 
	 To bound the expectation of \eqref{align_final}, for each $n\ge 0$, set $r_n:=2^{-n(p_\gamma-1)^{-2}}$ and  $A_n:=\{y\in B(0,1): |y|\in (r_{n+1},r_n]\}$. Then by sub-additivity of the function $x\mapsto x^{p_\gamma-1}$, the expectation of \eqref{align_final} is less than 
	 \begin{equation}
	 \sum_n \frac{r_n^{(d-\gamma^2/2)(p_\gamma-1)}}{(d-\frac{\gamma^2}{2})^{p_\gamma-1}}\E(\e^{\gamma(p_\gamma-1) (\sup_{u\le\log(r_{n+1}^{-1})} X_u(0))}),	 \end{equation}
	 where since $(X_u(0); u\ge 0)$ is a standard Brownian motion, the expectation in the above is less than a constant times 
	 \begin{equation}
	 r_{n+1}^{\frac{\gamma^2}{2}(p_\gamma-1)^2}=r_n^{\frac{\gamma^2}{2}(p_\gamma-1)^2}2^{-\frac{\gamma^2}{2}(p_\gamma-1)^{-2}(p_\gamma-1)^2}=r_n^{\frac{\gamma^2}{2}(p_\gamma-1)^2}2^{-\frac{\gamma^2}{2}}.
	 \end{equation} Finally, observing that
	  \begin{equation}
	 (d-\gamma^2+\frac{\gamma^3}{2\sqrt{2d}})=\sqrt{2d}(p_\gamma-1)(\frac{\sqrt{2d}+\gamma}{2}-\frac{\gamma^2}{2\sqrt{2d}})\ge \sqrt{2d}(p_\gamma-1)\frac{\gamma}{2},
	 \end{equation} and that $(d-\frac{\gamma^2}{2})^{p_\gamma-1}$ is uniformly bounded in $\gamma$, it follows that the left-hand side of \eqref{eqn:moment_p} is bounded by an absolute constant (not depending on $\gamma$) times 
	 \begin{equation}
	 \sum_n r_n^{(p_\gamma-1)(d-\gamma^2/2+\frac{\gamma^3}{2\sqrt{2d}})}=\frac{1}{1-2^{-(p_\gamma-1)^{-2}(d-\gamma^2/2+\frac{\gamma^3}{2\sqrt{2d}})}}\le \frac{1}{1-2^{-\sqrt{2d}\frac{\gamma}{2}}},
	 \end{equation}
	 which is indeed uniformly bounded for $\gamma\ge 1$.
\end{proof} 

\begin{cor}\label{cor:p-1}
	Take the same set-up as \cref{lem:p-1}. Then there exists a constant $C$, independent of $\gamma\in [1,\sqrt{2d})$, such that for any non-negative $f$ on $[0,1]^d$:
	\begin{equation} \label{eqn:momfp}
	\E\big((\int_{[0,1]^d} f(x) \mu^\gamma(dx))^{p_\gamma}\big)\le C  \int_{[0,1]^d} f(x)^{p_\gamma} \, dx.	\end{equation}
\end{cor}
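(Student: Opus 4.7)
\medskip

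\noindent\textbf{Plan.} My strategy is to run a size-biasing argument on the outer $p_\gamma$-th moment and then reuse, verbatim, the spine-plus-annuli machinery developed for \cref{lem:p-1}, finally pairing it with Young's convolution inequality to convert the $L^\infty$-style bookkeeping of \cref{lem:p-1} into the desired $L^{p_\gamma}$ bound.

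First, by the Cameron--Martin shift applied to the unit-mean martingale $e^{\gamma X_t(x)-(\gamma^2/2)t}$ (and taking $t\to\infty$, which is justified since $p_\gamma<2d/\gamma^2$ so all moments below are finite), one obtains the standard size-biasing identity
\begin{equation*}
\E\Big[\Big(\textstyle\int f\,d\mu^\gamma\Big)^{p_\gamma}\Big]=\int_{[0,1]^d} f(x)\,\E\Big[\Big(\textstyle\int f(y)\,e^{\gamma^2 K(x,y)}\,\mu^\gamma(dy)\Big)^{p_\gamma-1}\Big]\,dx.
\end{equation*}
Because $\star$-scale invariant fields are stationary, the inner expectation is, after translating by $x$, of exactly the form treated in \cref{lem:p-1} with weight $h(z)=f(x+z)$ against $|z|^{-\gamma^2}$ (using $e^{\gamma^2 K(x,y)}\le|x-y|^{-\gamma^2}$ on $\{|x-y|\le 1\}$ and an absolute bound elsewhere).

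Next, I will rerun the conditional-Jensen step of \cref{lem:p-1}: conditioning on the sigma-algebra $\cG_x$ of the spine Brownian motion at $x$, concavity of $s^{p_\gamma-1}$ yields $\E[(\cdots)^{p_\gamma-1}\mid\cG_x]\lesssim(\int f(y)|y-x|^{-\gamma^2/2}e^{\gamma\nu_{\cG_x}(y)}\,dy)^{p_\gamma-1}$, where $\nu_{\cG_x}(y)\le\sup_{u\le-\log|y-x|}X_u(x)$. Decomposing $B(x,1)$ into annuli $A_n(x)=\{y:|y-x|\in(r_{n+1},r_n]\}$ with $r_n=2^{-n(p_\gamma-1)^{-2}}$, subadditivity of $s\mapsto s^{p_\gamma-1}$ and the reflection-principle bound $\E[e^{\gamma(p_\gamma-1)\sup_{u\le T_n}X_u}]\le 2\cdot 2^{(n+1)\gamma^2/2}$ give
\begin{equation*}
\E\Big[\Big(\textstyle\int f(y)e^{\gamma^2 K(x,y)}\mu^\gamma(dy)\Big)^{p_\gamma-1}\Big]\lesssim \sum_n r_n^{-\gamma^2(p_\gamma-1)^2/2}\Big(\int_{A_n(x)} f(y)\,|y-x|^{-\gamma^2/2}\,dy\Big)^{p_\gamma-1}.
\end{equation*}

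The final move, which makes the $L^{p_\gamma}$ bound come out cleanly, is to integrate this against $f(x)\,dx$, apply the outer Hölder inequality with exponents $p_\gamma$ and $p_\gamma/(p_\gamma-1)$, and notice that the inner integral is the convolution $(g_n\ast f)(x)$ with $g_n(z)=|z|^{-\gamma^2/2}\mathbf 1_{|z|\in(r_{n+1},r_n]}$. By Young's convolution inequality $\|g_n\ast f\|_{p_\gamma}\le\|g_n\|_1\|f\|_{p_\gamma}\le C_d(d-\gamma^2/2)^{-1}r_n^{d-\gamma^2/2}\|f\|_{p_\gamma}$, so
\begin{equation*}
\int f(x)\Big(\textstyle\int_{A_n(x)}f(y)|y-x|^{-\gamma^2/2}dy\Big)^{p_\gamma-1}dx\le\|f\|_{p_\gamma}\,\|g_n\ast f\|_{p_\gamma}^{p_\gamma-1}\le C\,\frac{r_n^{(d-\gamma^2/2)(p_\gamma-1)}}{(d-\gamma^2/2)^{p_\gamma-1}}\,\|f\|_{p_\gamma}^{p_\gamma}.
\end{equation*}
Combining with the earlier Gaussian factor produces $r_n^{(p_\gamma-1)(d-\gamma^2 p_\gamma/2)}$, and the final display in the proof of \cref{lem:p-1} shows this series sums to a bounded quantity uniformly in $\gamma\in[1,\sqrt{2d})$; similarly $(d-\gamma^2/2)^{-(p_\gamma-1)}$ stays bounded since $x^{-x}\to 1$ as $x\to 0^+$. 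This yields the claim.

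The main obstacle is identifying the correct ordering of operations: naively applying Hölder's inequality inside the $\mu^\gamma$-integral to separate $f$ from the weight $e^{\gamma^2 K(x,\cdot)}$, or using Jensen to pass the expectation under the $(p_\gamma-1)$-power at any stage beyond the conditional-on-$\cG_x$ step, generates a divergent Lebesgue integral in the regime $\gamma^2>d$. The key point is that the concavity afforded by the exponent $p_\gamma-1<1$ must be spent exactly once --- namely on the conditional expectation given the spine --- after which the remaining deterministic integral over $x$ is controlled by a single application of Young's inequality.
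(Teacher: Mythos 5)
Your proof is correct, but it takes a genuinely different --- and more laborious --- route than the paper's. The paper's argument is short: apply Jensen with the convex function $s\mapsto s^{p_\gamma}$ to the probability measure $\mu^\gamma(\cdot)/\mu^\gamma([0,1]^d)$, giving $\left(\int f\,d\mu^\gamma\right)^{p_\gamma}\le \mu^\gamma([0,1]^d)^{p_\gamma-1}\int f^{p_\gamma}\,d\mu^\gamma$, which pulls $f^{p_\gamma}$ out of the expectation immediately. Re-expressing via the rooted measure $\Q^*$ of \cref{sec:spine}, the right-hand side is $\E_{\Q^*}[f(x^*)^{p_\gamma}\mu^\gamma([0,1]^d)^{p_\gamma-1}]$, and after conditioning on $x^*$ (uniform on $[0,1]^d$) what remains is exactly the quantity that \cref{lem:p-1} bounds, uniformly in $x^*$ by translation invariance. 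So \cref{lem:p-1} is invoked as a black box. You instead skip the outer Jensen, use a size-biasing/Cameron--Martin identity that keeps $f(y)$ inside the tilted chaos, and are therefore forced to re-run the entire annulus-plus-spine machinery of \cref{lem:p-1} with $f$ as a weight, closing with H\"older and Young's convolution inequality. The estimates do go through: the exponent $r_n^{(p_\gamma-1)(d-\gamma^2 p_\gamma/2)}$ you produce agrees with the paper's $(p_\gamma-1)(d-\gamma^2+\gamma^3/(2\sqrt{2d}))$ (the paper's final display in \cref{lem:p-1} writes $d-\gamma^2/2$ where $d-\gamma^2$ is meant), and your uniformity checks are fine. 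But you have effectively reproved \cref{lem:p-1} in greater generality rather than applying it. One small inaccuracy in your closing commentary: the suggestion that ``using Jensen... generates a divergent Lebesgue integral'' does not apply to the paper's outer Jensen, which is a convex Jensen applied before any tilting and is exactly the step that lets one avoid redoing the annulus argument. Your observation that the concave conditional Jensen must be spent only once on the spine is correct within your route, but it is not a structural obstruction to using Jensen per se.
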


\begin{proof}
	First, by Jensen's inequality the left-hand side of \eqref{eqn:momfp} is less than or equal to \begin{equation}\E(\mu^\gamma([0,1]^d)^{p_\gamma} \int_{[0,1]^d} f(x)^{p_\gamma} \frac{\mu^\gamma(dx)}{\mu^\gamma([0,1]^d})
	=  \E_{\Q^{*}}(f(x^*)^{p_\gamma} \mu^\gamma([0,1]^d)^{p_\gamma-1} )
	\end{equation}
	where $\Q^*$ is as defined in \cref{sec:spine}. Recall that under $\Q^*$ and conditionally on $x^*$, the field keeps the same covariance structure as under $\P$ but has mean given by $\gamma K_X(x,x^*)$ at $x\in \R^d$. By conditioning on $x^*$ (whose marginal law is just given by Lebesgue measure on $[0,1]^d$), and by \cref{lem:p-1} together with translation invariance of the field, the result follows.
\end{proof}

\subsection{Proof of \cref{prop:gmcasder}}

The proof of this proposition follows closely the outline of \cite{Mad16,APS18two}, but in a continuum setting: making use of the decorrelation properties of the field and the $\star$-equation \eqref{eqn:star}.

\begin{proof}[Proof of \cref{prop:gmcasder}] Without loss of generality, let us show that 
\begin{equation}
 \frac{\mu^\gamma([0,1]^d)}{\sqrt{2d}-\gamma}\to 2\mu'([0,1]^d) \text{ in probability as } \gamma\uparrow \sqrt{2d}.
\end{equation}
Writing $\sqrt{t}=C/(\gamma_c-\gamma)$, the strategy is to prove that
\begin{align}\label{diag1}
\lim_{C\to \infty} \lim_{\gamma\uparrow \gamma_c} 
\, &\P(|\frac{\mu_t^\gamma([0,1]^d)}{\gamma_c-\gamma}- 2\mu'([0,1]^d|> \eps)  =0 \text{ and }\\ \label{diag2} \limsup_{C\to \infty} \limsup_{\gamma\uparrow \gamma_c} \, &\P(|\frac{\mu_t^\gamma([0,1]^d)-\mu^\gamma([0,1]^d)}{\gamma_c-\gamma}|>\eps)  =0 \text{ for any } \eps>0,
\end{align}
which clearly implies the result.

To see \eqref{diag1}, the idea is to make use of \cref{lem:sh}, writing the left-hand side of \eqref{diag1} as
\begin{equation}
\mu_t'([0,1]^d) \times \frac{e^{-C^2/2}}{C} \times \frac{\sqrt{t}}{\mu_t'([0,1]^d)} \int \e^{\gamma_c X_t-(\gamma_c^2/2)t} \e^{C (\frac{-X_t+\gamma_c t}{\sqrt{t}})}
\end{equation}
in a form reminiscent of \eqref{eqn:sh}.
Applying \cref{lem:sh} to the function $x\mapsto\exp(Cx)$, it then follows that the above converges to 
\begin{equation}\label{eqn:limwC} \mu'([0,1]^d) \times \frac{e^{-C^2/2}}{C} \times \sqrt{\frac{2}{\pi}} \E(e^{CR_1}) \end{equation}
in probability as $\gamma\uparrow \gamma_c$ (and hence $t\to \infty$). In fact, a small extra argument is required here since the function $x\mapsto\exp(Cx)$ is not bounded, but one can first truncate the function and then take a limit as the truncation lifts (exactly as in \cite{APS18two}; the details are omitted).
Since \begin{equation}\label{eqn:BMlap}\E(\e^{CR_1})\sim \sqrt{2\pi} C \e^{C^2/2},\end{equation} 
as $C\to \infty$, \eqref{eqn:limwC} converges to $2\mu'([0,1]^d)$ as $C\to \infty$, and \eqref{diag1} has been shown.

\eqref{diag2} is a little trickier, and makes use of some slightly delicate moment analysis: this is where \cref{cor:p-1} comes into play.  First observe that it is possible, for every $t\ge 0$, to cover the set $[0,1]^d$ with a finite number $N(d)$ of collections of boxes $(\cB_{t}^i)_{1\le i \le N(d)}$, where each set $\cB_t^i$ consists of $\e^{dt}$ boxes with side lengths $e^{-t}$, that are all at distance greater than $e^{-t}$ from one another. Note that $N(d)$ does not depend on $t$.

Then it is clear that 
\begin{equation} |\frac{\mu_t^\gamma([0,1]^d)-\mu^\gamma([0,1]^d)}{\gamma_c-\gamma}|\le \sum_{i=1}^{N(d)}|\frac{\mu_t^\gamma({\cB_t^i})-\mu^\gamma( {\cB_t^i})}{\gamma_c-\gamma}|.\end{equation}
Let $\cF_t$ be the $\sigma$-algebra generated by $(X_s(x); {x\in [0,1]^d, s\in [0,t]})$. It is enough to show that 
\begin{equation}\label{eq:p} \limsup_{C\to \infty}\limsup_{\gamma\uparrow \gamma_c} \P(\sum_{i=1}^{N(d)} \frac{\E(|\mu_t^\gamma(\cB_t^i)-\mu^\gamma(\cB_t^i)|^{p_\gamma} \, | \, \cF_t)}{(\gamma_c-\gamma)^{p_\gamma}}>\delta) =0 \end{equation}
for any $\delta$, where ${p_\gamma}=1+\frac{\gamma_c-\gamma}{\gamma_c}$. Indeed, for a general random variable $X$, $\sigma$-algebra $\cF$ and $p>1$
\begin{equation}
\P(|X|>\eps)\le \P(\E(|X|^p|\cF)>\eps^p \eta)) +\P(\{|X|>\eps\}\cap \{\E(|X|^p|\cF)>\eps^p \eta\}),
\end{equation}
where the second probability above is less than $\eta$ by Markov's inequality, and $\eta$ can be taken arbitrarily small. Note that ${p_\gamma}<(\gamma_c/\gamma)^2$ and so by \cite[Proposition 3.5]{RV10} (or just \cref{cor:p-1}), the ${p_\gamma}$th moments of $\mu^\gamma$ are finite. 

Next write, for fixed $i$:
\[ |\mu_t^\gamma(\cB_t^i)-\mu^\gamma(\cB_t^i)|=\left|\sum_{B\in \cB_t^i} \left(\int_{B} \e^{\gamma X_t(x)-(\gamma^2/2)t} \mu^{\gamma,t}(dx) -  \int_{B} \e^{\gamma X_t(x)-(\gamma^2/2)t}\, dx \right)\right|.\]
The key point is that, given $\cF_t$, the terms  $(\int_{B} \e^{\gamma X_t(x)-(\gamma^2/2)t} \mu^{\gamma,t}(dx) -  \int_{B} \e^{\gamma X_t(x)-(\gamma^2/2)t}\, dx )$ for each $B\in \cB_t^i$ are conditionally independent, and have conditional mean $0$. Since $p_\gamma\in (1,2)$ this allows for an application of the (conditional) von Bahr--Esseen theorem \cite{EVB} to obtain that
\begin{align} \label{align1}& \frac{\E(\big|\mu_t^\gamma(\cB_t^i)-\mu^\gamma(\cB_t^i)|^{p_\gamma}\, | \, \cF_t)}{(\gamma_c-\gamma)^{p_\gamma}}  \nonumber \\ & \le (\frac{2}{\gamma_c-\gamma})^{p_\gamma} \sum_{B\in \cB_t^i} \E(|\int_{B} \e^{\gamma X_t(x)-(\gamma^2/2)t}\e^{-dt} \mu^{\gamma,t}(dx) -  \int_{B} \e^{\gamma X_t(x)-(\gamma^2/2)t}\, dx \big|^{p_\gamma} \, | \, \cF_t) \nonumber \\ & \le (\frac{2}{\gamma_c-\gamma})^{p_\gamma} \sum_{B\in \cB_t^i} \left(\E(|\int_{B} \e^{\gamma X_t(x)-(\gamma^2/2)t} \e^{-dt} \mu^{\gamma,t}(dx)|^{p_\gamma} \, | \, \cF_t)+ |\int_{B} \e^{\gamma X_t(x)-(\gamma^2/2)t}\, dx |^{p_\gamma}\right). 
\end{align}

Now by Jensen's inequality, it is possible to bound 
\[ |\int_{B} \e^{\gamma X_t(x)-(\gamma^2/2)t}\, dx |^{p_\gamma}\le |B|^{{p_\gamma}-1}\int_B \e^{\gamma {p_\gamma}X_t(x)-\frac{\gamma^2{p_\gamma}}{2}t} dx\le \int_B \e^{\gamma {p_\gamma}X_t(x)-(\frac{\gamma^2{p_\gamma}}{2}+d({p_\gamma}-1))t} dx ,\]
and exactly the same bound holds for $\E(|\int_{B} \e^{\gamma X_t(x)-(\gamma^2/2)t} \e^{-dt} \mu^{\gamma,t}(dx)|^{p_\gamma} \, | \cF_t)$ by \cref{cor:p-1} and the scaling property \eqref{eqn:ssi} of $\mu^{\gamma,t}$. So, putting this together, it follows that the left-hand side of \eqref{align1} is less than or equal to 
\begin{align} &(\frac{2}{\gamma_c-\gamma})^{p_\gamma} \sum_{B\in \cB_t^i} \int_B \e^{\gamma {p_\gamma} X_t(x)-\frac{\gamma^2{p_\gamma}}{2}+d({p_\gamma}-1))t}\, dx  \nonumber \\ &= 2 \,(\frac{2}{\gamma_c-\gamma})^{{p_\gamma}-1}\,\frac{\gamma_c-\gamma {p_\gamma}}{\gamma_c-\gamma}  \, \frac{\e^{(\frac{\gamma^2{p_\gamma}^2}{2}-\frac{\gamma^2{p_\gamma}}{2}-d({p_\gamma}-1))t}}{\gamma_c-\gamma {p_\gamma}}\,\int_{[0,1]^d} \e^{(\gamma {p_\gamma})X_t(x)-(\gamma^2{p_\gamma}^2/2)t} \, dx. \end{align}

Finally, observe that
\begin{equation}\label{eqn:expC2} \e^{(\frac{\gamma^2{p_\gamma}^2}{2}-\frac{\gamma^2{p_\gamma}}{2}-d({p_\gamma}-1))t}=\e^{-\frac{C^2}{2}(\frac{\gamma_c+\gamma}{2}-\frac{\gamma^2}{\gamma_c^2})}\to \e^{-\frac{C^2}{2}} \text{ as } t\to \infty, \end{equation}  while $(2/(\gamma_c-\gamma))^{{p_\gamma}-1}\to 1$ and $(\gamma_c-\gamma {p_\gamma})/(\gamma_c-\gamma)\to 1$ as $t\to\infty$. Since $\gamma {p_\gamma} \uparrow \gamma_c$ as $\gamma\uparrow \gamma_c$ it also holds by \eqref{eqn:limwC} that 
\begin{equation}  \frac{1}{\gamma_c-\gamma {p_\gamma}} \int_{[0,1]^d} \e^{\gamma {p_\gamma} X_t(x)-\frac{\gamma^2{p_\gamma}^2}{2}t} \, dx \to \mu'([0,1])^d \e^{-\tilde{C}^2/2}{\tilde C}\sqrt{\frac{2}{\pi}} \E(\e^{\tilde{C}R_1}) \end{equation}
in probability as $\gamma\uparrow \gamma_c$, where $\tilde{C}$ is such that $C/(\gamma_c-\gamma)=\tilde{C}/(\gamma_c-\gamma p_\gamma)$.
Since the right-hand side of the above tends to $2\mu'([0,1]^d)<\infty$ as  $C$ and (therefore $\tilde{C}$) tends to infinity,  combining this with \eqref{eqn:expC2} and a union bound provides \eqref{eq:p}. 
\end{proof}

\section{Applications} 

\subsection{Studying extrema}\label{sec:extrema}
One of the areas in which critical chaos measures turn out to play a distinguished role, is in the study of extreme values of log-correlated fields. It is believed that the behaviour of these extrema should be somewhat universal, not only within the world of \emph{Gaussian} log-correlated fields, but extending to log-correlated models in random matrix theory and even probabilistic number theory.
This section is intended to give a flavour of what is known and expected to be true, but is in no way comprehensive, and the author has little claim to expertise in this area so the discussion will be kept at a high level. For a more thorough exposition of this topic the reader is referred to \cite{Arg16}; see also  \cite[\S 2]{BL16}.

\subsubsection{Maxima of Gaussian log-correlated fields}

It has been known since the work of Bramson \cite{bra83} that the position of the minimal (or maximal) particle in a branching Brownian motion has a limiting speed $\sqrt{2}$, that its median has a (negative) logarithmic second order correction, and that the difference between the minimal position and its median converges in distribution to a random shift of a Gumbel distribution \cite{LS87}. The same result is known for the minimal position in a general branching random walk, due to A\"{i}d\'{e}kon and building on earlier work of many authors; see \cite{Aid13} and the references therein. Moreover, the random shift of the limiting Gumbel distribution is given by the limit of the ``derivative martingale'' for the branching random walk. That is, the branching random walk analogue of the critical chaos measure.

As ever, a similar phenomenon is expected to be seen for log-correlated Gaussian fields. In fact there has already been cause to consider extreme values of the field in this article, recall \eqref{eqn:extrema_basic}, but this was a only a preliminary estimate.\\

Let us start by stating a general tightness result.

\begin{theorem}[\cite{Aco14}]\label{thm:tight_max}
	Suppose that $(Y_\eps;\, \eps>0)$ are centred Gaussian fields on $[0,1]^d$ for each $\eps$, such that for some $C_Y<\infty$: $\cov(Y_\eps(x)Y_\eps(y))-\log(\eps \vee \|x-y\|)\le C_Y$ for all $x,y,\eps$; and $\E[(Y_\eps(x)-Y_\eps(y))^2]\le C_Y\eps^{-1}\|x-y\|$ for all $\|x-y\|<\eps$. Then, setting
	\begin{equation}
	m_\eps:=\sqrt{2d}\log(1/\eps)-\frac{3/2}{\sqrt{2d}}\log\log(1/\eps),
	\end{equation}
	the family $(\sup_{x\in [0,1]^d} Y_\eps(x)) - m_\eps$ is tight in $\eps$. 
\end{theorem}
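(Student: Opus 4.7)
My strategy is to reduce tightness of $M_\eps := \sup_{x\in[0,1]^d} Y_\eps(x) - m_\eps$ to the corresponding statement for a modified branching random walk (MBRW), for which the result is classical by the work of Bramson and Aidekon, extended to the $d$-dimensional hierarchical setting by Bramson--Ding--Zeitouni. The reference MBRW is the $d$-dimensional Gaussian field on $[0,1]^d$ built by summing independent scale-$2^{-k}$ white-noise contributions for $0\le k\le\log_2(1/\eps)$, arranged so that its covariance between $x$ and $y$ equals $-\log(\eps\vee\|x-y\|)$ up to an $O(1)$ additive error. I would then prove upper and lower tail bounds on $M_\eps$ separately, each by comparing $Y_\eps$ with the MBRW via Kahane/Slepian/Sudakov--Fernique-type Gaussian inequalities; the local smoothness hypothesis $\E[(Y_\eps(x)-Y_\eps(y))^2]\le C_Y\eps^{-1}\|x-y\|$ for $\|x-y\|<\eps$ is what allows the comparison to be carried out after discretization at scale $\eps$, as it controls the fluctuations inside each cell of the discretization at cost $O(1)$.

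For the lower tail, $\P(M_\eps<-t)\to 0$ as $t\to\infty$ uniformly in $\eps$: the covariance upper bound in the hypothesis says that $Y_\eps$ is pointwise at least as decorrelated as the reference MBRW, up to the additive constant $C_Y$, so by Slepian's inequality (after matching variances by appending an independent centred Gaussian of bounded variance) the maximum of $Y_\eps$ stochastically dominates that of the MBRW up to an $O(1)$ shift. Lower-tail tightness of the MBRW maximum minus $m_\eps$ is then the well-known lower-tail estimate for the branching random walk / branching Brownian motion maximum.

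For the upper tail, $\P(M_\eps>t)\to 0$ uniformly in $\eps$: a direct Gaussian union bound over an $\eps$-net, using only the variance bound $\var Y_\eps(x)\le \log(1/\eps)+O(1)$, gives a centring of the form $\sqrt{2d}\log(1/\eps) -\tfrac{1}{2\sqrt{2d}}\log\log(1/\eps)$, with the wrong coefficient in front of the $\log\log$ correction. To recover the sharp $\tfrac{3}{2\sqrt{2d}}$ coefficient I would implement a \emph{truncated} first moment argument: count only those net points $x$ at which a scale decomposition of $Y_\eps$ along dyadic scales stays below an affinely decreasing barrier of slope $\sqrt{2d}$ at every intermediate scale. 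A ballot-type (Brownian-motion-staying-below-a-line / 3d-Bessel) estimate says that such a trajectory has conditional probability $\sim 1/\log(1/\eps)$ given its terminal value, which supplies precisely the extra $\log\log(1/\eps)$ factor missing from the naive union bound. The main obstacle lies here: for $\star$-scale invariant or GFF-type fields the needed scale-Markovian decomposition of $Y_\eps$ is built in, but under the generality of \cref{thm:tight_max} one must \emph{construct} a hierarchical approximation to $Y_\eps$ by hand (for instance via conditional expectations on a nested family of $\sigma$-algebras, or via a Gaussian decomposition theorem in the spirit of \cref{thm:decomp}) that is simultaneously Markovian in scale and close enough to $Y_\eps$ in covariance for the ballot estimate to transfer back to $Y_\eps$ with only $O(1)$ error.
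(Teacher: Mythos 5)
The paper does not actually prove \cref{thm:tight_max}; it is cited directly from Acosta's paper \cite{Aco14}, so there is no ``paper's own proof'' to compare against. Your outline is, however, in the general spirit of Acosta's argument (Gaussian comparison against a hierarchically structured reference field, with the sharp centring $m_\eps$ imported from the branching random walk / discrete GFF literature), so the comparison is worth making against \cite{Aco14}.

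The critical issue is your treatment of the upper tail. You correctly observe that the covariance hypothesis in the statement, read as an \emph{upper} bound $\cov(Y_\eps(x),Y_\eps(y))\le -\log(\eps\vee\|x-y\|)+C_Y$, lets Slepian push the lower tail of $\sup Y_\eps$ below that of the MBRW; but you do not notice that the same one-sided hypothesis gives you nothing usable for the upper tail. Indeed, under a one-sided upper bound alone the statement is false: take $Y_\eps$ to be an essentially independent field at scale $\eps$ (variance $\log(1/\eps)$, decorrelated beyond scale $2\eps$, $C^1$ inside each cell). It satisfies both hypotheses as stated, yet its maximum sits around $\sqrt{2d}\log(1/\eps)-\tfrac{1}{2\sqrt{2d}}\log\log(1/\eps)$, so $\sup Y_\eps - m_\eps \approx \tfrac{1}{\sqrt{2d}}\log\log(1/\eps)\to\infty$ and is not tight. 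The hypothesis in the statement should be read as \emph{two-sided}, $|\cov(Y_\eps(x),Y_\eps(y))+\log(\eps\vee\|x-y\|)|\le C_Y$, which is what \cite{Aco14} assumes. With that correction, the upper tail also follows by Gaussian comparison in the reverse direction: the covariance \emph{lower} bound lets you dominate $\sup Y_\eps$ by the maximum of (a padded) MBRW, for which the sharp upper tail at centring $m_\eps$ is a known input from the branching random walk/BDZ analysis. This makes your proposed ``re-derive the ballot argument directly on $Y_\eps$'' step unnecessary, which is fortunate because, as you acknowledge, it is not clear how to decompose a general $Y_\eps$ into scale-Markovian increments starting only from covariance estimates. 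So the gap you flag is real, but the resolution is not a new hierarchical decomposition of $Y_\eps$; it is to observe that the missing covariance lower bound must be added to the hypotheses, and once it is, the upper tail is a second Slepian/Sudakov--Fernique comparison rather than a direct barrier computation.

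A smaller point: you invoke Slepian to get a \emph{stochastic} comparison after padding by an independent Gaussian to match variances, but the hypotheses only give variance bounds up to additive $O(1)$ rather than exact matching. The clean statement that Acosta uses to convert an $O(1)$ covariance discrepancy into an $O(1)$ shift of the maximum is a comparison lemma (in the spirit of Kahane's inequality, or the comparison lemma of Bramson--Ding--Zeitouni) that incorporates this padding correctly. That is a technical rather than conceptual point, but worth being precise about if you intend the proof to be complete.
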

Given this, the natural question to ask is whether the limit of $\sup Y_\eps - m_\eps$ exists in law as $\eps\to \infty$. Motivated by the above discussion one may also wonder if there is a universal feature, related to a Gumbel distribution and critical Gaussian multiplicative chaos, in the limit.

To the best of the authors knowledge this question is yet to be completely resolved. But there has been considerable progress:

\begin{itemize}
	\item In \cite{Mad15}, Madaule proves that for $\star$-scale cut-off approximations to $\star$-scale invariant log-correlated Gaussian fields,
	\begin{equation}\label{eqn:gumbel}
	\sup_{x\in [0,1]^d}(X_t(x)-\sqrt{2d}t+\frac{3}{2\sqrt{2d}}\log(t)) \to G_d
	\end{equation}
	in law as $t\to \infty$, where $G_d$ is a Gumbel distribution convolved with $\mu'([0,1]^d)$ and $\mu'$ is the critical measure constructed in \cref{thm:const_gen}. More precisely, there exists a constant $C^*$, such that for all $z\in \R$: 
	\begin{equation}\label{eqn:starscalemax}
	\lim_{t\to \infty} \P(	\sup_{x\in [0,1]^d}(X_t(x)-\sqrt{2d}t+\frac{3}{2\sqrt{2d}}\log(t))\le -z)=\E(\exp(-C^*\e^{\sqrt{2d}z}\mu'([0,1]^d))).
	\end{equation}
	\item The analogous result has been shown by Bramson, Ding and Zeitouni \cite{BDZ16} and Biskup and Louidor \cite{BL16} for  discrete approximations (i.e., on a lattice) to the planar Gaussian free field. Again the critical measure for the free field appears as the random shift of the limiting Gumbel distribution \cite{BL20}. In fact, the papers \cite{BL16,BL20} describe not only the maximum of the field, but the extremal process associated to its local maxima, unveiling an even stronger connection with critical GMC. 
	
	To explain this, take a domain $D\subset \C$, and for every $N$ a discrete Gaussian free field $h_N$ defined on a lattice version $D_N$ of $N D$ (see \cite{BL16} for the detailed definition of $D_N$). More precisely, $h_N$ is the centred Gaussian field on $D_N$ with probability density at $(h_N(v))_v$ proportional to \begin{equation}
	\label{def:discGFF}
	\exp(-\frac{1}{2} \sum_{v\in D_N} \nabla h_N(v))
	\end{equation}for each $N$, where $\nabla$ is the discrete gradient on $\Z^2$ and $h_N(v)$ is set to $0$ outside of $D_N$ by convention.
	
	 Then Biskup and Louidor prove that the rescaled positions and recentred values of the local maxima of $h_N$ converge, as $N\to \infty$, to a Poisson point process with intensity given by 
	\begin{equation}
	Z^D(dx)\otimes e^{-\sqrt{2\pi}h} \, dh,
	\end{equation}
	where $Z^D$ is the critical chaos measure for a (continuum) GFF on $D$. The identification of $Z^D$ turns out to be highly non-trivial matter, and was actually completed in \cite{BL20} using a characterisation theorem.
	
	From this it follows, in particular, that for any $A\subset D$ and $z\in \R$:
	\begin{equation}
	\P(N^{-1} \arg\max_{D_N} h_N\in A; \, \max_{x\in D_N} h_N(x)- m_N \le -z )\to \E(\frac{Z^D(A)}{Z^D(D)}\exp(-\frac{1}{\sqrt{2\pi}} \e^{\sqrt{2\pi}z} Z^D(D))
	\end{equation}
	as $N\to \infty$, where $m_N:=\sqrt{8/\pi}\log N - \sqrt{3/(2\pi)}\log \log N$; cf. \eqref{eqn:starscalemax}. Let us also remark that in \cite{BL18}, the authors prove a further convergence result for the \emph{full} extremal process of the discrete GFF (i.e., not restricted to local maxima) which allows them to prove a local limit theorem for the position and value of the GFF maximum.

	\item Ding, Roy and Zeitouni \cite{DRZ17} were also able to build on the techniques of \cite{BDZ16}, and extend the result for the planar GFF to much more general sequences of log-correlated Gaussian fields on large lattices (equivalently, fine lattice approximations to a bounded set). The reader is referred to \cite{DRZ17} for the exact assumptions made, but let us note that in some natural circumstances they require significant work to verify. 
	
	\item For example, this is the case when considering the four-dimensional membrane model \cite{Sch20}: a Gaussian model for a random height function defined on a subset $A$ of $\Z^4$. The exact definition is closely related to that of the planar discrete GFF, having probability density at $(h_v)_{v\in A}$  proportional to $\exp(-\frac{1}{2}\sum_{v\in A}|\Delta h_v|^2)$ rather than \eqref{def:discGFF} (see \cite{Sch20} for details). In \cite{Sch20}, it is proven that the recentred maximum of this function converges to a randomly shifted Gumbel distribution, by showing that the conditions of \cite{DRZ17} are satisfied. This requires a substantial and delicate analysis of the Green's function for the bi-Laplacian on $\Z^4$.
	\item As far as the author understands, a completely general result concerning the convergence of recentred maxima, say for fields as in \cref{thm:tight_max}, is not currently known.
\end{itemize}

As hinted at previously, understanding the law of critical Gaussian multiplicative chaos measures may not only help to understand the extreme values of log-correlated Gaussian fields. Namely, there are other non-Gaussian fields that are expected to be governed by similar behaviour. 

In the coming subsections, a few different families of such fields will be discussed. Let us emphasise again that each of these topics constitutes a rich and substantial area of research in its own right. Thus, what follows represents just a snapshot of progress, and will focus only on the results that are most closely linked with critical Gaussian multiplicative chaos.

\subsubsection{Ginzburg--Landau model}
As an initial example, one family of approximately log-correlated fields (that are actually known to converge to continuum Gaussian free fields under certain conditions \cite{Mil11,NS97}) are gradient models with convex interactions or ``Ginzburg-Landau'' fields.  These are natural generalisations of the discrete planar GFF, where the density $\exp(-\frac{1}{2}\sum|\nabla h|^2)$ for the free field is replaced by \begin{equation} \exp(-\frac{1}{2}V(\nabla h)) \; ; \; V(\mathbf{x})=v(x_1)+v(x_2)\end{equation} for some $v:\R\to \R$ symmetric and convex. 

Given the close connection with the planar Gaussian free field, it is natural to ask if the maxima of these Ginzburg--Landau fields display similar characteristics. Substantial progress on this was recently made in \cite{WW19}, building on the previous work \cite{BW16}. Under the assumption that $v\in C^2$ has bounded elliptic contrast ($v''\in [c,C]$ for some $0<c<C<\infty$), \cite{WW19} proves that after recentring around its mean, the maximum of the field in $[-N,N]^2\cap \Z^2$ is tight along a deterministic subsequence. In light of the discussion in the previous subsection, it is reasonable to believe that the  recentred maximum should actually converge, and that its limit will be a shifted Gumbel distribution. However, this seems to remain open.
 
\subsubsection{Random matrices}
 
Let us turn next to random matrix theory, a topic whose connection to Gaussian fields and GMC has been widely studied and utilised in recent years. The relationship between the two comes about when looking at the characteristic polynomials of large random matrices sampled from natural ensembles. More concretely, it is expected and in some settings proved - see for example \cite{HKO01,RV07,FKS16} - that the logarithms of these characteristic polynomials will behave like Gaussian log-correlated fields as the matrix size goes to $\infty$. 

Building on this idea, it should therefore be the case that normalised powers of such characteristic polynomials converge to Gaussian multiplicative chaos measures. This has now been shown rigorously in several settings when the power corresponds to a \emph{subcritical} chaos measure \cite{Web15,NSW18,LOS18,BWW18,CFLW19,Lam19,Lam20}. Recently in the case of the circular $\beta$-ensemble, convergence (of a closely related quantity) to a \emph{critical} chaos measure has been shown, \cite{CN19}. The proof of this actually goes via a subcritical version, making use of \cref{thm:gmcasder} for the GFF on the circle. Similar results are naturally expected to hold for a large class of matrix ensembles, as in the subcritical setting, but this seems to be the only result to date. 

More generally, it is  believed that the extreme values of log characteristic polynomials for  large random matrices should display the same behaviour as extrema of Gaussian log-correlated fields. For example, Fyodorov, Hiary and Keating famously conjectured \cite{FHK12} that if $X_N$ is the characteristic polynomial of a Haar-distributed $N\times N$ random unitary matrix, then
\begin{equation}\label{conj:fhk} \sup_{|z|=1} \log|X_N(z)| - (\log N - \frac{3}{4} \log \log N)
\end{equation}
will converge to a limiting law as $N\to \infty$, which is the average of two independent Gumbel variables. This should be compared with \eqref{eqn:gumbel} and \cref{thm:gumbel}: recall that the latter describes the law of the mass of a critical GMC measure in terms of a Gumbel distribution.
This conjecture has not been proven to date, but the sequence has been shown to be tight and significant progress on bounds has been achieved, \cite{PZ18,CMN18}. Let us also mention the works \cite{CFLW19,Lam20} that use GMC techniques to identify the leading order of the maximum for the characteristic polynomials of Hermitian matrices and the Ginibre ensemble respectively.

\subsubsection{Riemann-zeta function}

In a somewhat different direction, it turns out that the Riemann-zeta function $\zeta$, recentred at a random point on the critical line, can be connected with Gaussian multiplicative chaos. More precisely, one can consider a uniform random variable $\omega\in [1,2]$, and study what 
\begin{equation}\label{eqn:zeta}\log \zeta(1/2 + i\omega T + ix)\end{equation} looks like as $x$ ranges over an interval of some size and $T\to \infty$. The broad principle, see \cite{FHK12,FK14} for some heuristics, is that (at least the real part of) this can be compared with a Gaussian log-correlated field in the limit. One motivation for such a study is the major open problem in number theory of understanding the growth and \emph{global} maxima of $\zeta$ on the critical line. This will not be touched upon here, but interested readers are referred to \cite[\S 1.1]{AOR19} for a summary, and the references therein for more details.

As far as mathematical results concerning \eqref{eqn:zeta} go, Selberg's classical theorem \cite{Sel92} guarantees that $\log|\zeta(1/2+i\omega T)|/\sqrt{\log \log T}$ converges to a centred Gaussian random variable as $T\to \infty$. Since then, many precise correlation results have been proven. For example:
\begin{itemize}
	\item \cite{HNY08} showed the independence of values of $\log|\zeta(1/2+iy)|$ at points $y$ that are sufficiently far apart;
\item 	\cite{Bou10} showed joint convergence to a correlated Gaussian vector, of the values at points that are sufficiently close;
\item \cite{AOR19} showed the existence of a freezing transition for the moments, over both macroscopic and mesoscopic intervals, as predicted by \cite{FK14}. More precisely, they showed that for $\theta>-1$ and any $\eps>0$, 
\begin{equation}\label{eqn:ouimet}
\int_{-(\log T)^{\theta}}^{(\log T)^{\theta}} |\zeta(\frac{1}{2}+i\omega T +ix)|^\beta \in [(\log T)^{f_\theta(\beta)-\eps},(\log T)^{f_\theta(\beta)+\eps}] \text{ with probability } 1-o(1)
\end{equation}
as $T\to \infty$,  where $f_\theta(\beta)$ changes from being quadratic to linear at $\beta=2$ when $\theta\le 0$, and at $\beta=2\sqrt{1+\theta}$ for $\theta\in (0,3]$.  See \cite[Theorem 1.1]{AOR19} for the explicit description of $f_\theta(\beta)$. Note that when $\theta=0$ (considering an interval of fixed size), if $\Re \log \zeta(1/2+i\omega T+ix)$ is comparable to a log correlated Gaussian field, then this freezing transition corresponds to the phase transition for Gaussian multiplicative chaos (\cref{sec:phase_trans}).

\end{itemize} 

From a slightly different perspective, Saksman and Webb \cite{SW16i,SW16ii} considered the \emph{truncation } $\zeta_N(s)=\prod_{k=1}^N (1-p_k^{-s})^{-1}$  of the Riemann-zeta function, and were able to show (among other things) that $\zeta_N(1/2+i\omega T+i x)$ converges to a limiting generalised function $\zeta_{N,\mathrm{rand}}(1/2+ix)$ as $T\to \infty$. They concretely connected this to Gaussian multiplicative chaos by proving that the random measures 
\begin{equation}
\frac{|\zeta_{N,\mathrm{rand}}(1/2+ix)|^\beta}{
\E(|\zeta_{N,\mathrm{rand}}(1/2+ix)|^\beta)} \, dx  \text{ and } \sqrt{\log\log N}\frac{|\zeta_{N,\mathrm{rand}}(1/2+ix)|^2}{
\E(|\zeta_{N,\mathrm{rand}}(1/2+ix)|^2)} \, dx
\end{equation} 
converge to subcritical and critical  GMC measures (respectively) on $[0,1]$  as $N\to \infty$. On the other hand, while they conjecture that in fact
\begin{equation}
(\log T)^{-\frac{1}{4} \beta^2}\, |\zeta(1/2+ix+i\omega T)|^\beta dx \text{ for }\beta<2, \text{ and } \sqrt{\log \log T} |\zeta(1/2+ix+i\omega T)|^2
\end{equation}
converge to the same measures as $T\to \infty$, cf. \eqref{eqn:ouimet}, this still remains open.

Finally, there is the question of the extreme values of \eqref{eqn:zeta}. An analogous conjecture of Fyodorov, Hiary and Keating in this case (cf. \eqref{conj:fhk}), is that
 \begin{equation}\label{conj:fhk2}
 \sup_{x\in [-1,1] } \log|\zeta(1/2+i\omega T + ix)|-(\log \log T - \frac{3}{4} \log \log \log T) 
 \end{equation}
 should converge to a limiting distribution as $T\to \infty$. This also remains open, but recent progress has been made by \cite{ABH17,Naj18,ABB19,Har19}: there now exist both lower and upper bounds with high probability (to first and second order respectively) for the supremum in \eqref{conj:fhk2}. This is further extended in \cite{AOR19} to intervals $x\in[-(\log T)^\theta, (\log T)^\theta]$ of different lengths ($\theta>-1$). The reader is referred to the cited articles for precise statements. 
 
 \subsubsection{Local time of planar random walks and Brownian motion}
 
 Other models with related extreme value statistics arise by considering local times of planar random walks and Brownian motion. Due to the isomorphism theorems of Dynkin and Ray--Knight \cite{Dynkin,EKMRS}, which relate these local times to squares of discrete Gaussian free fields, it is reasonable to expect a relationship between the ``thick points'' of these objects. To summarise:
 \begin{itemize}
 \item Thick points of a planar random walk are those places that the walk visits unusually often, and are thus encoded by atypically large local times. The sets of such points (with thickness parametrised appropriately) have been the subject of extensive investigation over the years. In particular,  the sizes of these sets have now been described precisely in the works \cite{DPRZ,Ros05,BR07,JegTP}.
 
 \item As discussed in \cref{sec:const} of this article, the thick points of log-correlated Gaussian fields (see e.g., \eqref{eqn:thick}) are intimately linked with the associated Gaussian multiplicative chaos measures. More precisely, the measure with parameter $\gamma$ will give full mass to the set of $\gamma$-thick points, including at $\gamma=\gamma_c$: the largest value of $\gamma$ for which thick points actually exist.

\item By analogy with the GMC case, it is therefore natural to try and construct measures that are supported on sets of thick points for planar \emph{Brownian motion}, with good reason to believe that these should describe the distribution of random walk thick points in an appropriate scaling limit. Such a measure was first constructed via a regularisation procedure in \cite{BBK94}, for any ``thickness parameter'' $a$ less than $1/2$. Later, simultaneously in \cite{AHS18,JegCon}, the construction was extended to the full ``subcritical'' range $a\in (0,2)$. These articles are concerned with the accumulated local time for Brownian motion run until it leaves a given planar domain $D$, and although the regularisations of local time differ slightly between them, they eventually produce the same measures. These measures are sometimes referred to as ``Brownian chaos measures''. 
\item Following the construction, it was shown by Jego \cite{JegChar} that point processes of planar random walk thick points do indeed converge weakly to these measures after suitable rescaling.
\end{itemize}

 In this framework, $a=2$ corresponds to the \emph{critical} parameter, at or above which the ``usual'' approximation procedure yields a trivial measure in the limit. Although the case $a=2$ was not considered initially, critical Brownian multiplicative chaos has now been defined by Jego, \cite{JegCrit}. It is constructed in \cite{JegCrit} using both the Seneta--Heyde and ``derivative'' normalisation schemes (cf. \cref{thm:const_gen}) as well as via a limit from the subcritical regime (cf. \cref{thm:gmcasder}). To avoid confusion with the Gaussian case, let us denote this measure by $\nu'$.
 
 Of course from here, one would like to know if the ``thickest'' points of planar random walks display the same behaviour as the extrema of Gaussian log-correlated fields. With the critical measure $\nu'$ defined, Jego was able to formulate a specific conjecture for this in \cite{JegCrit}. Namely, if $l_x^N$ is the total local time at $x$, for a simple random walk on $\Z^2$ started from the origin and stopped when leaving $[-N,N]^2\cap \Z^2$, then the conjecture  is that for all $z\in \R$
 \begin{equation}\label{conj:brownian_chaos}
 \P((\sup_{x\in \Z^2} \sqrt{l_x^N} - (\frac{2}{\sqrt{\pi}}\log N - \frac{1}{\sqrt{\pi}} \log \log N)
 \le -z )\to \E(\exp(-c_1 \nu'([-1,1]^2)\e^{c_2 z})
 \end{equation}
 as $N\to \infty$, for some positive constants $c_1, c_2$. Again this would say that the limiting law of the recentred maximum is a Gumbel law shifted by a critical chaos measure.
 
Let us conclude this subsection by mentioning that there is already a growing body of work on very closely related questions. For example, \cite{abe2018} proved the analogous result to \eqref{conj:brownian_chaos} in the setting of simple random walks on symmetric trees, while \cite{CLS18,DRZ19} have recently shown the same for cover times of random walks on binary trees. In a slightly different direction, \cite{abe2015,AB19,ABL19} considered local times of random walks when they are run up to a time proportional to the the cover time of the graph. In \cite{abe2015} this is the 2d-torus, while in \cite{AB19,ABL19} it is a lattice approximation to a planar domain with all boundary vertices identified and re-entry through the boundary allowed (see \cite{AB19,ABL19} for details). In these works, among other things, thick points for the local time are shown to be distributed according to subcritical GFF chaos measures. In addition, the local structure of the local time field near these thick points is identified.

\subsection{Conformal welding}
One final topic where a surprisingly nice picture in connection with Gaussian multiplicative chaos emerges, is that of random conformal weldings. The story is quite involved and different in flavour to the rest of this survey, but nonetheless deserves a mention.

The classical (deterministic) conformal welding problem is the question of embedding a pair of discs, glued along their boundaries according to some homeomorphism $\varphi$, into the two-dimensional sphere $\mathbb{S}^2$. When this is possible, it produces a \emph{welding curve} $\eta\in \mathbb{S}^2$, which separates the images of the two discs under the embedding. There is a rich theory of complex analysis surrounding this question, addressing when such weldings exist, and studying the interplay between curve and homeomorphism. For an introduction to the topic that is particularly well-suited to probabilists, see \cite{AJKS10}.

One natural choice of homeomorphism, the case of isometric welding, is given by identifying two measures on the disc boundaries according to length with respect to some fixed reference points. Of course, one can also consider the problem with \emph{random} boundary measures, and this is where Gaussian multiplicative chaos comes in. 

When the boundary measures are irregular (random or not, but for example when they are GMC measures) it is really quite hard to determine whether or not the conformal welding problem admits a solution. And even harder to say anything about the welding curve, \cite{Bis07}.
However, there is a remarkable result due to Scott Sheffield \cite{Sh16}, which describes exactly what happens when the boundary measures are taken to be two independent copies of subcritical chaos for (a variant of) the GFF on the unit circle. A \emph{constructive} approach in this specific set-up yields both existence of the conformal welding, and classification of the welding curve as a Schramm--Loewner evolution (SLE$_\kappa$) with parameter $\kappa=\gamma^2<4$. 

On the other hand, the theorem does not extend to the case of critical chaos, which happens to correspond to a special ``transition point'' $\kappa=4$ for the  SLE parameter. This is the point below which SLE$_\kappa$ is a simple curve, and above which SLE$_\kappa$ has double points at all scales. It was only as a result of \cref{thm:gmcasder} that \cite{HP18} were able to extend Sheffield's result to the critical case. In turn, this brings about another potential application of critical chaos: can it be used to say anything about the behaviour of SLE$_4$? The regularity of SLE$_4$ actually remains somewhat mysterious, meaning that such an application would certainly be interesting (although far from straightforward).

To the best knowledge of the author, there are not any results concerning existence of conformal weldings for more general chaos measures on the circle. However, the beautiful paper \cite{AJKS10} did consider a problem closely related to that of \cite{Sh16} from a much more analytic perspective, and it may well be that their strategies can be adapted to work in a broader context.

\bibliographystyle{alpha}
\bibliography{EP_bibliography}

\newcommand{\etalchar}[1]{$^{#1}$}
\begin{thebibliography}{DRSV14b}

\bibitem[AB19]{AB19}
Y.~Abe and M.~Biskup.
\newblock Exceptional points of two-dimensional random walks at multiples of
  the cover time.
\newblock {\em Preprint arXiv:1903.04045}, 2019.

\bibitem[ABB{\etalchar{+}}19]{ABB19}
L-P. Arguin, D.~Belius, P.~Bourgade, M.~Radziwi\l~\l, and K.~Soundararajan.
\newblock Maximum of the {R}iemann zeta function on a short interval of the
  critical line.
\newblock {\em Comm. Pure Appl. Math.}, 72(3):500--535, 2019.

\bibitem[Abe15]{abe2015}
Y.~Abe.
\newblock Maximum and minimum of local times for two-dimensional random walk.
\newblock {\em Electron. Commun. Probab.}, 20:14 pp., 2015.

\bibitem[Abe18]{abe2018}
Y.~Abe.
\newblock Extremes of local times for simple random walks on symmetric trees.
\newblock {\em Electron. J. Probab.}, 23:41 pp., 2018.

\bibitem[ABH17]{ABH17}
L-P. Arguin, D.~Belius, and A.~J. Harper.
\newblock Maxima of a randomized {R}iemann zeta function, and branching random
  walks.
\newblock {\em Ann. Appl. Probab.}, 27(1):178--215, 2017.

\bibitem[ABL19]{ABL19}
Y.~Abe, M.~Biskup, and S.~Lee.
\newblock Exceptional points of discrete-time random walks in planar domains.
\newblock {\em Preprint arXiv:1911.11810}, 2019.

\bibitem[Aco14]{Aco14}
J.~Acosta.
\newblock Tightness of the recentered maximum of log-correlated {G}aussian
  fields.
\newblock {\em Elec. Journal Probab.}, 19:(25p), 2014.

\bibitem[AHS18]{AHS18}
E.~A\"{i}d\'{e}kon, Y.~Hu, and Z.~Shi.
\newblock Points of infinite multiplicity of planar {B}rownian motion: measures
  and local times.
\newblock {\em Preprint arXiv:1809.07094}, 2018.

\bibitem[A{\"{i}}d13]{Aid13}
E.~A{\"{i}}d\'{e}kon.
\newblock Convergence in law of the minimum of a branching random walk.
\newblock {\em Ann. Probab.}, 41(3A):1362--1426, 2013.

\bibitem[AJKS11]{AJKS10}
K.~Astala, P~Jones, A.~Kupiainen, and E.~Saksman.
\newblock Random conformal weldings.
\newblock {\em Acta Math.}, 207(2):203--254, 2011.

\bibitem[AOR19]{AOR19}
L-P. Arguin, F.~Ouimet, and M.~Radziwi\l\l.
\newblock Moments of the {R}iemann zeta function on short intervals of the
  critical line.
\newblock {\em Preprint arXiv:1901.04061}, 2019.

\bibitem[APS19]{APS18two}
J.~Aru, E.~Powell, and A.~Sep\'{u}lveda.
\newblock Critical {L}iouville measure as a limit of subcritical measures.
\newblock {\em Electron. Commun. Probab.}, 24:(16p), 2019.

\bibitem[APS20]{APS18one}
J.~Aru, E.~Powell, and A.~Sep\'{u}lveda.
\newblock Liouville measure as a multiplicative cascade via level sets of the
  {G}aussian free field.
\newblock {\em Annales de l'Institut Fourier}, 70(1):205--245, 2020.

\bibitem[Arg16]{Arg16}
L-P. Arguin.
\newblock Extrema of log-correlated random variables: principles and examples.
\newblock {\em Preprint arXiv:1601.00582}, 2016.

\bibitem[ARV13]{ARV13}
R.~Allez, R.~Rhodes, and V.~Vargas.
\newblock Lognormal $\star$-scale invariant random measures.
\newblock {\em Probab. Theory Rel. Fields}, 155:751--788, 2013.

\bibitem[AS14]{AiSh}
E.~A\"{i}d\'{e}kon and Z.~Shi.
\newblock Seneta-{H}eyde rescaling for the branching random walk.
\newblock {\em Annals of Probability}, 42(3):959--993, 2014.

\bibitem[ASW19]{BTLS}
J.~Aru, A.~Sep\'{u}lveda, and W~Werner.
\newblock On bounded-type thin local sets of the two-dimensional gaussian free
  field.
\newblock {\em Journal of the Institute of Mathematics of Jussieu},
  18(3):591–618, 2019.

\bibitem[BBK94]{BBK94}
R.~F. Bass, K.~Burdzy, and D.~Khoshnevisan.
\newblock Intersection local time for points of infinite multiplicity.
\newblock {\em Ann. Probab.}, 22(2):566--625, 1994.

\bibitem[BDZ16]{BDZ16}
M.~Bramson, J.~Ding, and O.~Zeitouni.
\newblock Convergence in law of the maximum of the two-dimensional discrete
  {G}aussian free field.
\newblock {\em Comm. Pure Appl. Math.}, 69(1):62--123, 2016.

\bibitem[Ber16]{Ber16}
N.~Berestycki.
\newblock Introduction to the {G}aussian free field and {L}iouville quantum
  gravity.
\newblock Lecture notes, available on the webpage of the author, 2016.

\bibitem[Ber17]{Ber17}
N.~Berestycki.
\newblock An elementary approach to {G}aussian multiplicative chaos.
\newblock {\em Electron. Commun. Probab.}, 22:(12p), 2017.

\bibitem[Bis07]{Bis07}
C.~Bishop.
\newblock Conformal welding and {K}oebe's theorem.
\newblock {\em Ann. of Math.}, 166:613--656, 2007.

\bibitem[BK04]{BK04}
J.~D. Biggins and A.~E. Kyprianou.
\newblock Measure change in multitype branching.
\newblock {\em Adv. in Appl. Probab.}, 36(2):544--581, 2004.

\bibitem[BK05]{BK05}
J.D. Biggins and A.E. Kyprianou.
\newblock Fixed points of the smoothing transform: the boundary case.
\newblock {\em Electron. J. Probab.}, 10:609– 631, 2005.

\bibitem[BKN{\etalchar{+}}14]{BKNSW}
J.~Barral, A.~Kupiainen, M.~Nikula, E.~Saksman, and C.~Webb.
\newblock Critical mandelbrot cascades.
\newblock {\em Comm. Math. Phys.}, 325, 2014.

\bibitem[BKN{\etalchar{+}}15]{BKNSW2}
J.~Barral, A.~Kupiainen, M.~Nikula, E.~Saksman, and C.~Webb.
\newblock Basic properties of critical lognormal multiplicative chaos.
\newblock {\em Ann. Probab.}, 43(5):2205--2249, 2015.

\bibitem[BL16]{BL16}
M.~Biskup and O.~Louidor.
\newblock Extreme local extrema of two-dimensional discrete {G}aussian free
  field.
\newblock {\em Comm. Math. Phys.}, 345(1):271--304, 2016.

\bibitem[BL18]{BL18}
M.~Biskup and O.~Louidor.
\newblock Full extremal process, cluster law and freezing for the
  two-dimensional discrete {G}aussian free field.
\newblock {\em Adv. Math.}, 330:589--687, 2018.

\bibitem[BL20]{BL20}
M.~Biskup and O.~Louidor.
\newblock Conformal symmetries in the extremal process of two-dimensional
  discrete {G}aussian free field.
\newblock {\em Commun. Math. Phys.}, 375(1):175--235, 2020.

\bibitem[Bou10]{Bou10}
P.~Bourgade.
\newblock Mesoscopic fluctuations of the zeta zeros.
\newblock {\em Probab. Theory Related Fields}, 148(3-4):479– 500, 2010.

\bibitem[BPR20]{BPR18}
N.~Berestycki, E.~Powell, and G.~Ray.
\newblock A characterisation of the {G}aussian free field.
\newblock {\em Probab. Theory Related Fields}, 176(3):1259--1301, 2020.

\bibitem[BR07]{BR07}
Richard~F. Bass and Jay Rosen.
\newblock Frequent points for random walks in two dimensions.
\newblock {\em Electron. J. Probab.}, 12:(46p), 2007.

\bibitem[Bra83]{bra83}
M.~Bramson.
\newblock Convergence of solutions of the {K}olmogorov equation to travelling
  waves.
\newblock {\em Mem. Amer. Math. Soc.}, 44:iv+190, 1983.

\bibitem[BW16]{BW16}
D.~Belius and W.~Wu.
\newblock Maximum of the {G}inzburg-{L}andau fields.
\newblock {\em Preprint arXiv:1610.04195}, 2016.

\bibitem[BWW18]{BWW18}
N.~Berestycki, C.~Webb, and M-D. Wong.
\newblock Random {H}ermitian matrices and {G}aussian multiplicative chaos.
\newblock {\em Probab. Theory Related Fields}, 172(1-2):103--189, 2018.

\bibitem[CFLW19]{CFLW19}
T.~Claeys, B.~Fahs, G.~Lambert, and C.~Webb.
\newblock How much can the eigenvalues of a random hermitian matrix fluctuate?
\newblock {\em Preprint arXiv:1906.01561}, 2019.

\bibitem[CLS18]{CLS18}
A.~Cortines, O.~Louidor, and S.~Saglietti.
\newblock A scaling limit for the cover time of the binary tree.
\newblock {\em Preprint arXiv:1812.10101}, 2018.

\bibitem[CMN18]{CMN18}
R.~Chhaibi, T.~Madaule, and J.~Najnudel.
\newblock On the maximum of the {${\rm C}\beta {\rm E}$} field.
\newblock {\em Duke Math. J.}, 167(12):2243--2345, 2018.

\bibitem[CN19]{CN19}
R.~Chhaibi and J.~Najnudel.
\newblock On the circle, $\text{GMC}^\gamma=\lim \text{C}\beta {E}_n$ for
  $\gamma=\sqrt{2/\beta}$, ($\gamma\le 1)$.
\newblock {\em Preprint arXiv:1904.00578}, 2019.

\bibitem[DIM77]{DIM77}
R.~T. Durrett, D.~L. Iglehart, and D.~R. Miller.
\newblock Weak convergence to brownian meander and brownian excursion.
\newblock {\em Ann. Probab.}, 5(1):117--129, 1977.

\bibitem[DPRZ01]{DPRZ}
A.~Dembo, Y.~Peres, J.~Rosen, and O.~Zeitouni.
\newblock Thick points for planar {B}rownian motion and the
  {E}rd\"{o}s-{T}aylor conjecture on random walk.
\newblock {\em Acta Math.}, 186(2):239--270, 2001.

\bibitem[DRSV14a]{DRSV14one}
B.~Duplantier, R.~Rhodes, S.~Sheffield, and V.~Vargas.
\newblock Critical {G}aussian multiplicative chaos: convergence of the
  derivative martingale.
\newblock {\em Ann. Probab.}, 42(5):1769--1808, 2014.

\bibitem[DRSV14b]{DRSV14two}
B.~Duplantier, R.~Rhodes, S.~Sheffield, and V.~Vargas.
\newblock Renormalization of critical {G}aussian multiplicative chaos and {KPZ}
  relation.
\newblock {\em Comm. Math. Phys.}, 330(1):283--330, 2014.

\bibitem[DRZ17]{DRZ17}
J.~Ding, R.~Roy, and O.~Zeitouni.
\newblock Convergence of the centered maximum of log-correlated {G}aussian
  fields.
\newblock {\em Ann. Probab.}, 45(6A):3886--3928, 2017.

\bibitem[DRZ19]{DRZ19}
A.~Dembo, J.~Rosen, and O.~Zeitouni.
\newblock Limit law for the cover time of a random walk on a binary tree.
\newblock {\em Preprint arXiv:1906.07276}, 2019.

\bibitem[DS11]{DS11}
B.~Duplantier and S.~Sheffield.
\newblock Liouville quantum gravity and {KPZ}.
\newblock {\em Invent. Math.}, 185(2):333--393, 2011.

\bibitem[Dur19]{Durbook}
R.~Durrett.
\newblock {\em Probability: Theory and Examples}.
\newblock Cambridge Series in Statistical and Probabilistic Mathematics.
  Cambridge University Press, {F}ifth edition, 2019.

\bibitem[Dyn83]{Dynkin}
E.~B. Dynkin.
\newblock Markov processes as a tool in field theory.
\newblock {\em J. Funct. Anal.}, 50(2):167--187, 1983.

\bibitem[EKM{\etalchar{+}}00]{EKMRS}
N.~Eisenbaum, H.~Kaspi, M.B. Marcus, J.~Rosen, and Z.~Shi.
\newblock A {R}ay-{K}night theorem for symmetric {M}arkov processes.
\newblock {\em Ann. Probab.}, 28(4):1781--1796, 2000.

\bibitem[FB08]{FB08}
Y.V. Fyodorov and J-P. Bouchaud.
\newblock Freezing and extreme value statistics in a random energy model with
  logarithmically correlated potential.
\newblock {\em J. Phys. A: Math. Theor.}, 41(37), 2008.

\bibitem[FHK12]{FHK12}
Y.~V. Fyodorov, G.~A. Hiary, and J.~P. Keating.
\newblock Freezing transitions, characteristic polynomials of random matrics,
  and {R}iemann zeta function.
\newblock {\em Phys. Rev. Lett.}, 108, 2012.

\bibitem[FK14]{FK14}
Y.~V. Fyodorov and J.~P. Keating.
\newblock Freezing transitions and extreme values: random matrix theory, and
  disordered landscapes.
\newblock {\em Philos. Trans. R. Soc. Lond. Ser. A Math. Phys. Eng. Sci.},
  372(2007):20120503, 32, 2014.

\bibitem[FKS16]{FKS16}
Y.~V. Fyodorov, B.~A. Khoruzhenko, and N.~J. Simm.
\newblock Fractional {B}rownian motion with {H}urst index {$H=0$} and the
  {G}aussian unitary ensemble.
\newblock {\em Ann. Probab.}, 44(4):2980--3031, 2016.

\bibitem[Har19]{Har19}
A.~Harper.
\newblock On the partition function of the {R}iemann zeta function, and the
  {F}yodorov--{H}iary--{K}eating conjecture.
\newblock {\em Preprint arXiv:1906.05783}, 2019.

\bibitem[HKO01]{HKO01}
C.~P. Hughes, J.~P. Keating, and N.~O'Connell.
\newblock On the characteristic polynomial of a random unitary matrix.
\newblock {\em Comm. Math. Phys.}, 220(2):429--451, 2001.

\bibitem[HMP10]{HMP10}
X.~Hu, J.~Miller, and Y.~Peres.
\newblock Thick points of the {G}aussian free field.
\newblock {\em Ann. Probab.}, 38(2):896--926, 2010.

\bibitem[HNY08]{HNY08}
C.-P. Hughes, A.~Nikeghbali, and M.~Yor.
\newblock An arithmetic model for the total disorder process.
\newblock {\em Probab. Theory Relat. Fields}, 141(1-2):47–59, 2008.

\bibitem[HP18]{HP18}
N.~Holden and E.~Powell.
\newblock Conformal welding for critical {L}iouville quantum gravity.
\newblock {\em Preprint arXiv:1812.11808}, 2018.

\bibitem[HR17]{HRspine}
S.~C. Harris and M.~I. Roberts.
\newblock The many-to-few lemma and multiple spines.
\newblock {\em Ann. Inst. H. Poincaré Probab. Statist.}, 53(1):226--242, 2017.

\bibitem[HRV18]{HRV18}
Y.~Huang, R.~Rhodes, and V.~Vargas.
\newblock Liouville quantum gravity on the unit disk.
\newblock {\em Annales de l'Institut Henri Poincar{\'e}, Probabilit{\'e}s et
  Statistiques}, 54(3):1694--1730, 2018.

\bibitem[Jeg18]{JegCon}
A.~Jego.
\newblock Planar {B}rownian motion and {G}aussian multiplicative chaos.
\newblock {\em To appear in Ann. Probab.}, 2018.

\bibitem[Jeg19]{JegChar}
A.~Jego.
\newblock Characterisation of planar {B}rownian multiplicative chaos.
\newblock {\em Preprint arXiv:1909.05067}, 2019.

\bibitem[Jeg20a]{JegCrit}
A.~Jego.
\newblock Critical {B}rownian multiplicative chaos.
\newblock {\em Preprint arXiv:2005.14610}, 2020.

\bibitem[Jeg20b]{JegTP}
A.~Jego.
\newblock Thick points of random walk and the {G}aussian free field.
\newblock {\em To appear in Electron. J. Probab.}, 2020.

\bibitem[JS17]{JS17}
J.~Junnila and E.~Saksman.
\newblock Uniqueness of critical {G}aussian chaos.
\newblock {\em Electron. J. Probab.}, 22:(31p), 2017.

\bibitem[JSW18]{JSW18}
J.~Junnila, E.~Saksman, and C.~Webb.
\newblock Imaginary multiplicative chaos: Moments, regularity and connections
  to the ising model.
\newblock {\em Preprint arXiv:1806.02118}, 2018.

\bibitem[JSW19]{JSW19}
J.~Junnila, E.~Saksman, and C.~Webb.
\newblock Decompositions of log-correlated fields with applications.
\newblock {\em Ann. Appl. Probab.}, 29(6):3786--3820, 2019.

\bibitem[Kah85]{Kah85}
J-P. Kahane.
\newblock Sur le chaos multiplicatif.
\newblock {\em Ann. Sci. Math. Qu\'{e}bec}, 9(2):105--150, 1985.

\bibitem[Lam19]{Lam19}
G.~Lambert.
\newblock Mesoscopic central limit theorem for the circular beta-ensembles and
  applications.
\newblock {\em Preprint arXiv:1902.06611}, 2019.

\bibitem[Lam20]{Lam20}
G.~Lambert.
\newblock The law of large numbers for the maximum of the characteristic
  polynomial of the ginibre ensemble.
\newblock {\em To appear in Comm. Math. Phys.}, 2020.

\bibitem[LOS18]{LOS18}
G.~Lambert, D.~Ostrovsky, and N.~Simm.
\newblock Subcritical multiplicative chaos for regularized counting statistics
  from random matrix theory.
\newblock {\em Comm. Math. Phys.}, 360(1):1--54, 2018.

\bibitem[LS87]{LS87}
S.~P. Lalley and T.~Sellke.
\newblock A conditional limit theorem for the frontier of a branching
  {B}rownian motion.
\newblock {\em Ann. Probab.}, 15:1052–1061, 1987.

\bibitem[Mad15]{Mad15}
T.~Madaule.
\newblock Maximum of a log-correlated {G}aussian field.
\newblock {\em Ann. Inst. Henri Poincar\'{e} Probab. Stat.}, 51(4):1369--1431,
  2015.

\bibitem[Mad16]{Mad16}
T.~Madaule.
\newblock First order transition for the branching random walk at the critical
  parameter.
\newblock {\em Stochastic Process. Appl.}, 126(2):470--502, 2016.

\bibitem[Man72]{Man72}
B.~B. Mandelbrot.
\newblock A possible refinement of the lognormal hypothesis concerning the
  distribution of energy in intermittent turbulence.
\newblock {\em Statistical Models and Turbulence, La Jolla, CA, Lecture Notes
  in Phys.}, 12:333--351, 1972.

\bibitem[Mil11]{Mil11}
J.~Miller.
\newblock Fluctuations for the {G}inzburg-{L}andau {$\nabla\phi$} interface
  model on a bounded domain.
\newblock {\em Comm. Math. Phys.}, 308(3):591--639, 2011.

\bibitem[Mot58]{Motoo}
M.~Motoo.
\newblock Proof of the law of iterated logarithm through diffusion equation.
\newblock {\em Ann. Inst. Statist. Math.}, 10:21--28, 1958.

\bibitem[MS11]{MSCLE}
J.~Miller and S.~Sheffield.
\newblock The {GFF} and {CLE}$_4$.
\newblock Slides of 2011 talks and private communications, 2011.

\bibitem[Naj18]{Naj18}
J.~Najnudel.
\newblock On the extreme values of the {R}iemann zeta function on random
  intervals of the critical line.
\newblock {\em Probab. Theory Related Fields}, 172(1-2):387--452, 2018.

\bibitem[NS97]{NS97}
A.~Naddaf and T.~Spencer.
\newblock On homogenization and scaling limit of some gradient perturbations of
  a massless free field.
\newblock {\em Comm. Math. Phys.}, 183(1):55--84, 1997.

\bibitem[NSW18]{NSW18}
M.~Nikula, E.~Saksman, and C.~Webb.
\newblock Multiplicative chaos and the characteristic polynomial of the {CUE}:
  the {L}$^1$ phase.
\newblock {\em Preprint arXiv:1806.01831}, 2018.

\bibitem[Pow18]{Pow18chaos}
E.~Powell.
\newblock Critical {G}aussian chaos: convergence and uniqueness in the
  derivative normalisation.
\newblock {\em Electron. J. Probab.}, 23:(26p), 2018.

\bibitem[PZ18]{PZ18}
E.~Paquette and O.~Zeitouni.
\newblock The maximum of the {CUE} field.
\newblock {\em Int. Math. Res. Not. IMRN}, 2018(16):5028--5119, 2018.

\bibitem[Rem20]{Rem20}
G.~Remy.
\newblock The {F}yodorov-{B}ouchaud formula and {L}iouville conformal field
  theory.
\newblock {\em Duke Math. J.}, 169(1):177--211, 2020.

\bibitem[Ros05]{Ros05}
J.~Rosen.
\newblock A random walk proof of the {E}rd\"{o}s-{T}aylor conjecture.
\newblock {\em Period. Math. Hungar.}, 50(1-2):223--245, 2005.

\bibitem[RV07]{RV07}
B.~Rider and B.~Vir\'{a}g.
\newblock The noise in the circular law and the {G}aussian free field.
\newblock {\em Int. Math. Res. Not. IMRN}, 2007(2), 2007.

\bibitem[RV10]{RV10}
R.~Robert and V.~Vargas.
\newblock Gaussian multiplicative chaos revisited.
\newblock {\em Ann. Probab.}, 38(2):605--631, 2010.

\bibitem[RV14]{RV14}
R.~Rhodes and V.~Vargas.
\newblock Gaussian multiplicative chaos and applications: a review.
\newblock {\em Probab. Surv.}, 11:315--392, 2014.

\bibitem[RV19]{RVtail}
R.~Rhodes and V.~Vargas.
\newblock The tail expansion of {G}aussian multiplicative chaos and the
  {L}iouville reflection coefficient.
\newblock {\em Ann. Probab.}, 47(5):3082--3107, 2019.

\bibitem[RY99]{RYbook}
D.~Revuz and M.~Yor.
\newblock {\em Continuous martingales and Brownian motion.}, volume 293 of {\em
  Grundlehren der Mathematischen Wissenschaften (Fundamental Principles of
  Mathematical Sciences)}.
\newblock Springer-Verlag, Berlin, {T}hird edition, 1999.

\bibitem[Sch20]{Sch20}
F.~Schweiger.
\newblock The maximum of the four-dimensional membrane model.
\newblock {\em Ann. Probab.}, 48(2):714--741, 2020.

\bibitem[Sel92]{Sel92}
A.~Selberg.
\newblock Old and new conjectures and results about a class of {D}irichlet
  series.
\newblock In {\em Proceedings of the {A}malfi {C}onference on {A}nalytic
  {N}umber {T}heory ({M}aiori, 1989)}, pages 367--385. Univ. Salerno, Salerno,
  1992.

\bibitem[Sha16]{Sha16}
A.~Shamov.
\newblock On {G}aussian multiplicative chaos.
\newblock {\em J. Funct. Anal.}, 270(9):3224--3261, 2016.

\bibitem[She07]{SheffieldGFF}
S.~Sheffield.
\newblock Gaussian free fields for mathematicians.
\newblock {\em Probab. Theory Related Fields}, 139:521–541, 2007.

\bibitem[She16]{Sh16}
S.~Sheffield.
\newblock Conformal weldings of random surfaces: {SLE} and the quantum gravity
  zipper.
\newblock {\em Ann. Probab.}, 44(5):3474--3545, 2016.

\bibitem[SW16a]{SW16ii}
E.~Saksman and C.~Webb.
\newblock Multiplicative chaos measures for a random model of the {R}iemann
  zeta function.
\newblock {\em Preprint arXiv:1604.08378}, 2016.

\bibitem[SW16b]{SW16i}
E.~Saksman and C.~Webb.
\newblock The {R}iemann zeta function and {G}aussian multiplicative chaos:
  statistics on the critical line.
\newblock {\em Preprint arXiv:1609.0002}, 2016.

\bibitem[vBE65]{EVB}
B.~von Bahr and C-G. Esseen.
\newblock Inequalities for the {$r^{th}$} absolute moment of a sum of random
  variables, {$1\leq r\leq 2$}.
\newblock {\em The Annals of Mathematical Statistics}, 36(1):299--303, 1965.

\bibitem[Web15]{Web15}
C.~Webb.
\newblock The characteristic polynomial of a random unitary matrix and
  {G}aussian multiplicative chaos---the {$L^2$}-phase.
\newblock {\em Electron. J. Probab.}, 20:(21p), 2015.

\bibitem[Won19a]{MDW19}
M-D. Wong.
\newblock Tail universality of critical {G}aussian multiplicative chaos.
\newblock {\em Preprint arXiv:1912.02755}, 2019.

\bibitem[Won19b]{MDW19subcrit}
M-D. Wong.
\newblock Universal tail profile of {G}aussian multiplicative chaos.
\newblock {\em Preprint arXiv:1902.04054}, 2019.

\bibitem[WP20]{WPgff}
W.~Werner and E.~Powell.
\newblock Lecture notes on the {G}aussian free field.
\newblock {\em Preprint arXiv:2004.04720}, 2020.

\bibitem[WZ19]{WW19}
W.~Wu and O.~Zeitouni.
\newblock Subsequential tightness of the maximum of two dimensional
  {G}inzburg-{L}andau fields.
\newblock {\em Electron. Commun. Probab.}, 24:(12p), 2019.

\end{thebibliography}
\end{document}